\documentclass[12pt,a4paper,oneside]{article}
\usepackage[top=3cm, bottom=3cm, left=2cm, right=2cm]{geometry}
\usepackage[english]{babel}
\usepackage[utf8]{inputenc}
\usepackage[T1]{fontenc}

\usepackage{amsthm,amsmath,amssymb,mathrsfs,dsfont,mathtools}
\usepackage{bbm}
\usepackage{bm}
\usepackage{graphicx}
\usepackage[colorlinks=true, allcolors=blue]{hyperref}
\usepackage{comment}
\usepackage[all]{nowidow}
\usepackage{booktabs}
\usepackage{cleveref}
\usepackage{float}
\usepackage{multirow}
\usepackage{algorithm}
\usepackage{algorithmic}
\usepackage{tikz}
\usepackage{pgfplots}
\usepackage{makecell}
\usepgfplotslibrary{groupplots}
\usepackage{subcaption}
\allowdisplaybreaks

\usepackage[authoryear, round]{natbib}

\newcommand{\iid}{\stackrel{\mbox{\scriptsize iid}}{\sim}}

\newcommand{\indicator}{\ensuremath{\mathbbm{1}}}

\renewcommand{\mid}{\ensuremath{\,|\,}}

\usepackage{authblk}
\date{}
\title{Bayesian Nonparametric Privacy-Preserving Synthetic Data Generation: I. Discrete Data}
\author[1]{Maria Chiara Menicucci}
\author[2]{Mario Beraha}
\author[1]{Stefano Favaro}
\author[3]{Riccardo Lazzarini}

\affil[1]{Department of Economics and Statistics, University of Torino and Collegio Carlo Alberto, 10134 Torino, Italy}
\affil[2]{Department of Economics, Management, and Statistics, University of Milano--Bicocca, 20126 Milano, Italy}
\affil[3]{Department of Decision Sciences, Bocconi University, 20136 Milano, Italy}

\providecommand{\keywords}[1]{
  \small 
  \textbf{\textit{Keywords:}} #1
  \normalsize
}

\usepackage[sf]{titlesec}
\usepackage{sectsty}
\allsectionsfont{\centering \normalfont\scshape} % Make all sections centered, the default font and small caps

\newtheorem{theorem}{Theorem}
\newtheorem{corollary}{Corollary}
\newtheorem{lemma}{Lemma}
\newtheorem{proposition}{Proposition}
\theoremstyle{definition}

\newtheorem{definition}{Definition}

\begin{document}

\maketitle

% \tableofcontents

\begin{abstract}
Synthetic data generation is a powerful approach to privacy-preserving statistical analysis, where data-release mechanisms are governed by a privacy-utility tradeoff: they should provide privacy guarantees while preserving the statistical utility of confidential data. We develop a Bayesian nonparametric framework for private synthetic data generation tailored to discrete data. Specifically, the confidential data are modeled as a random sample from an unknown discrete distribution endowed with a Pitman-Yor process prior, and synthetic data are generated from the corresponding posterior-predictive distribution. Since the Pitman-Yor process defines an almost surely discrete random probability measure, the resulting mechanism is naturally suited to data with ties and settings involving a potentially large, unknown, or growing number of categories. We study differential privacy guarantees of the Pitman-Yor posterior-predictive mechanism across the three regimes of the discount parameter $\sigma\in(-\infty,1)$. For $\sigma\in(0,1)$, we establish an instance-level $(\varepsilon,\delta)$-differential privacy guarantee. For $\sigma=0$ and $\sigma<0$, corresponding respectively to the Dirichlet process prior and to a parametric Dirichlet-Multinomial model, stronger guarantees are obtained, under suitable conditions on the released sample size. We also investigate statistical utility, or informativity, of the released data via the expected $1$-Wasserstein distance between the empirical distribution of the synthetic data and the "true" data-generating distribution. For $\sigma<0$ and $\sigma=0$, we prove consistency of the empirical distribution in this metric and derive explicit convergence rates, making precise the privacy-utility tradeoff: stronger privacy guarantees impose more restrictive choices of the released sample size, slowing down convergence to the "true" data-generating distribution.
\end{abstract}

\keywords{Bayesian nonparametrics; differential privacy; Dirichlet process prior; mechanism informativity, Pitman--Yor process prior; synthetic data generation; $1$-Wasserstein posterior contraction rates
}

%%%%%%%%%%%%%%%%%%%%%%%%%%%%%%%%
%%%%%%%%%%%%%%%%%%%%%%%%%%%%%%%%
%%%%%%%%%%%%%%%%%%%%%%%%%%%%%%%%
%%%%%%%%%%%%%%%%%%%%%%%%%%%%%%%%

\section{Introduction}

\subsection{Background and motivation}

Over the past three decades, synthetic data generation has become a powerful approach to privacy-preserving statistical analysis, aiming to reconcile data accessibility with the protection of sensitive information \citep{rubin1993statistical,DH24}. In this framework, a statistical model is fitted to the original confidential data and then used to draw new observations. These simulated or synthetic observations are released in place of the original data, which remain confidential. The effectiveness of such a synthetic data-generating mechanism is governed by a privacy--utility tradeoff: synthetic data should protect sensitive information from disclosure, while preserving enough of the statistical information in the original data to support downstream analyses. 

On the privacy side, differential privacy (DP) has become the standard framework for providing formal guarantees of data-release mechanisms. Introduced by \cite{differentialprivacy}, DP is now widely adopted in both academia and industry \citep{erlingsson,apple2017learning,ding2017collecting,abowd2022census}. At a high level, a mechanism is differentially private if changing a single database record has only a limited effect on its output. On the utility side, once a data-release mechanism has been shown to satisfy DP, it remains essential to assess whether the privatization process preserves enough statistical information from the original data. In this sense, utility requires that the released data remain informative for inference on the confidential data.  A general notion of statistical utility for data-release mechanism was proposed by \cite{wasserman}, under the name of informativity or consistency. This notion adopts the frequentist perspective that the confidential data are generated from an underlying ``true'' distribution, and measures utility through the rate at which the expected distance between the empirical measure of the released data and the true data-generating distribution converges to zero. Such a convergence rate provides a natural measure of the quality of the mechanism, quantifying how quickly the released data recover the underlying distribution.

Many private synthetic data-generating mechanisms have been proposed in the literature, ranging from histogram-based methods and exponential-mechanism to more recent approaches based on private-measure release and optimal transport \citep{machana,wasserman,mcsherry2007mechanism,boedihardjo2024privatemeasuresrandomwalks,He_2024,he2023algorithmicallyeffectivedifferentiallyprivate}. Within the Bayesian framework, a natural strategy is to release synthetic observations drawn from a posterior or posterior-predictive distribution \citep{dimitrakakis,Hu2022MechanismsFG,bernstein2019differentially,
NEURIPS2023_f3024ea8,huetal,machana}. This is attractive because the data-release mechanism is directly linked to the statistical model used for inference. Specifically, suppose that $n\geq1$ confidential observations are modeled through a Bayesian model with sampling distribution $p(\cdot\mid\theta)$ and prior distribution $\pi$:
\begin{equation}\label{eq:bayesmod}
X_1,\ldots,X_n \mid \theta \overset{\mathrm{iid}}{\sim} p(\cdot \mid \theta),\qquad \theta \sim \pi,
\end{equation}
and that $m\geq1$ observations $Z_1,\ldots,Z_m$ are drawn from the induced posterior-predictive distribution
\begin{displaymath}
Q_n(\mathrm dz_1,\dots,dz_m\mid X_1,\dots,X_n) = \int p^{\otimes m}(\mathrm dz_1,\dots,dz_m \mid \theta)\,\pi(\mathrm d\theta\mid X_1,\dots,X_n).
\end{displaymath}
Under the model \eqref{eq:bayesmod}, the random variables \((X_1,\ldots,X_n,Z_1,\ldots,Z_m)\) are jointly exchangeable, which means that the analyst who receive the synthetic data \((Z_1,\dots,Z_m)\) can analyze it assuming 
\begin{displaymath}
Z_1,\ldots,Z_m \mid \theta \overset{\mathrm{iid}}{\sim} p(\cdot \mid \theta),
\qquad \theta \sim \pi,
\end{displaymath}
without worrying about the privatization step. This contrasts with generic, externally specified DP data-release mechanisms, typically involving a perturbation of the confidential data. As emphasized by \citet{beraha2025mcmc}, valid likelihood-based or Bayesian inference from such released data typically requires integrating over the unobserved confidential sample, leading to intractable likelihoods.

Most existing Bayesian approaches to private synthetic data generation have been developed within parametric models. Such models may be appropriate when the released dataset is designed for a specific inferential task, but they are less satisfactory for general downstream use and, in particular, when the analyses to be performed are not known in advance. Indeed model misspecification may translate directly into a loss of statistical utility: features of the confidential data that are not represented by the chosen parametric family cannot be recovered from the released data. Bayesian nonparametric methods provide a natural alternative. By placing a prior directly on an infinite-dimensional space of probability distributions, they allow the effective complexity of the model to adapt to the data and grow with the sample size, while preserving the posterior and posterior-predictive structure that makes model-aligned release attractive.

\subsection{Preview of our contributions}

We develop a Bayesian nonparametric framework for private synthetic data generation tailored to discrete data. By discrete data, we mean observations that may exhibit ties and whose support may contain a large, unknown, or growing number of categories. We model $n\geq1$ confidential observations as a random sample from an unknown discrete distribution, which is endowed with a Pitman--Yor (PY) process prior \citep{perman1992sizebiased,pitmanyor} with discount parameter $\sigma\in(-\infty,1)$ and strength parameter $\theta>-\sigma$. The PY process is particularly natural for discrete data, since it defines a random probability measure with almost surely discrete realizations; consequently, samples from the PY process contain repeated observations with positive probability, while the posterior-predictive distribution allows for the emergence of new categories. Given the $n\geq 1$ confidential observations modeled as described above, we generate $m\geq1$ synthetic observations by sampling from the $m$-step posterior-predictive distribution induced by the PY process prior; we refer to this synthetic data-generating mechanism as the PY posterior-predictive mechanism. The PY process prior includes two important subcases or regimes: for $\sigma=0$ it reduces to the Dirichlet process prior \citep{dirichlet}, while for $\sigma<0$ it corresponds, at the sampling level, to a parametric Dirichlet--Multinomial model. The PY posterior-predictive mechanism is straightforward to implement, owing to the tractability of the PY posterior-predictive distribution \citep{Pitman1995ExchangeableAP,pitmanyor}. Moreover, the joint exchangeability of confidential data and synthetic data generated by the mechanism allows standard Markov chain Monte Carlo algorithms to be used for Bayesian inference.

We study DP guarantees of the PY posterior-predictive mechanism. The differentially private behaviour of the mechanism is governed by the discount parameter $\sigma$, which controls the prevalence of low-frequency categories in the confidential data. Since rare categories are more easily attributable to individual records, low-frequency categories, and singletons in particular, are the main source of privacy risk: if a value appears only once, then releasing that value in the synthetic sample may reveal the contribution of a single individual. In the regime $\sigma\in[0,1)$, we prove that the PY posterior-predictive mechanism satisfies an instance-level \((\varepsilon,\delta)\)-DP guarantee in the sense of \citet{Soria_Comas_2017}; this privacy guarantee should be viewed as a data-dependent relaxation of the \((\varepsilon,\delta)\)-DP of \citet{differentialprivacy}. In the regime $\sigma=0$, we establish a \((\varepsilon,\delta)\)-DP bound for the PY posterior-predictive mechanism, with \(\delta\) controlled explicitly by the ratio \(m/n\) between the synthetic and confidential sample sizes. Finally, in the regime $\sigma<0$, we prove that the PY posterior-predictive mechanism satisfies instance-level $\varepsilon$-DP and, under suitable assumptions on the confidential sample size $m$, it satisfies $\varepsilon$-DP \citep{machana}.

We also investigate the informativity of the PY posterior-predictive mechanism. We quantify informativity of the mechanism through the $1$-Wasserstein distance, in line with recent works on differentially private synthetic data-generating mechanisms \citep{boedihardjo2024privatemeasuresrandomwalks,donhauser2023certifiedprivatedatarelease,donhauser2024privacypreservingdatareleaseleveraging,he2023algorithmicallyeffectivedifferentiallyprivate,He_2024, he2024differentiallyprivatelowdimensionalsynthetic}. The $1$-Wasserstein distance is well suited to our setting: it applies naturally to discrete probability measures, allows comparison between empirical and population distributions, and remains computationally tractable beyond the univariate case, thereby enabling empirical validation of the proposed data-release mechanism. Moreover, by means of its dual formulation, convergence in $1$-Wasserstein distance ensures the preservation of all Lipschitz statistics. For the regimes $\sigma=0$ and $\sigma<0$, we prove that PY posterior-predictive mechanism is consistent in $1$-Wasserstein distance, with respect to a ``true" data-generating distribution, and we derive explicit convergence rates. These rates make precise the privacy--utility tradeoff originally described by \cite{wasserman}: stronger privacy guarantees impose more restrictive choices of the released sample size, which in turn slow down convergence to the ``true" data-generating distribution. For $\sigma=0$, a key step in our analysis consists in bounding the expected $1$-Wasserstein distance between a posterior Dirichlet process and its posterior mean. We derive a convergence rate for this expectation, which can be interpreted as a variance term for the posterior Dirichlet process and may be of independent interest.

\subsection{Organization of the paper}
Section~\ref{background} reviews the background and notation on DP and the PY process prior. Section~\ref{our approach} introduces the PY posterior-predictive mechanism and establishes its main properties, including DP guarantees and $1$-Wasserstein consistency results. Section~\ref{num} illustrates the methodology with an application to two real datasets. Section~\ref{disc} discusses limitations of the proposed approach and outlines directions for future research. Proofs and auxiliary technical results are deferred to the appendix.

%%%%%%%%%%%%%%%%%%%%%%%%%%%%%%%%
%%%%%%%%%%%%%%%%%%%%%%%%%%%%%%%%
%%%%%%%%%%%%%%%%%%%%%%%%%%%%%%%%
%%%%%%%%%%%%%%%%%%%%%%%%%%%%%%%%

\section{Background and notation}\label{background}
\subsection{Differential privacy}

We first recall the definition of DP \citep{differentialprivacy}. We work in the global DP setting: a trusted curator observes the confidential dataset and then releases a randomized output. This should be distinguished from local DP setting, where each individual randomizes her own record before it is observed by the curator. For $n\geq 1$, let $X_1,\ldots,X_n$ be the confidential data, defined on a probability space $(\Omega,\mathcal A,\mathbb P)$ and taking values in a Polish space $\mathbb X$ with Borel $\sigma$-algebra $\mathcal X$. For $k\geq 1$, write $\mathbb X^k=\prod_{i=1}^k\mathbb X$, and let $\mathcal X^k$ be the corresponding product Borel $\sigma$-algebra. The space $\mathbb X$ is often assumed to be bounded; for example, \cite{wasserman} consider $\mathbb X=[0,1]^r$, $r\geq 1$.

In privacy-preserving statistical analysis, the goal is to construct a data-release mechanism that takes the confidential dataset $\mathbf X := (X_1,\ldots,X_n)$ as input and releases a dataset $\mathbf Z := (Z_1,\ldots,Z_m)$, with each $Z_i$ taking values in $\mathbb X$, in such a way that individual privacy is protected. In the global DP setting, the dataset $\mathbf Z$ is generated from the entire dataset $\mathbf X$. Formally, the data-release mechanism is a probability kernel $Q_n(\,\cdot\,|\,\cdot\,):\mathcal X^m\times \mathbb X^n\to[0,1]$, which gives the conditional distribution of $\mathbf Z$ given $\mathbf X$. The number $m$ of released observations is a tuning parameter and plays an important role in our results, often characterizing the privacy--utility tradeoff: increasing $m$ typically improves the utility of the released data, but weakens the privacy guarantees. 
% Differential privacy can then be formulated as a constraint on the data release mechanism. In the global privacy setting, \cite{differentialprivacy} give the following definition.

DP can be formulated as a stability requirement on the data-release mechanism: changing one record in the confidential dataset should not substantially change the distribution of the released data. Throughout the paper, we use replacement adjacency. In particular, for two datasets \(\mathbf x,\mathbf y\in\mathbb X^n\), let
\begin{displaymath}
h(\mathbf x,\mathbf y) = \left|\{i\in\{1,\ldots,n\}:x_i\neq y_i\}\right|
\end{displaymath}
be their Hamming distance. We say that \(\mathbf x\) and \(\mathbf y\) are neighbouring, or adjacent, datasets if \(h(\mathbf x,\mathbf y)=1\). In the global DP setting, \cite{differentialprivacy} introduced the following notion of DP.

\begin{definition}[$(\varepsilon,\delta)$-DP]\label{epsdelta}
Let $\varepsilon > 0$ and $\delta \in [0,1)$. We say that a data-release mechanism $Q_n$ satisfies $(\varepsilon,\delta)$-DP if for all $B \in \mathcal{X}^m$ and any $\mathbf{x},\mathbf{y} \in \mathbb{X}^n$ such that $h(\mathbf{x},\mathbf{y}) = 1$, we have
\begin{equation}\label{check}
Q_n(B \ | \ \mathbf{X} = \mathbf{x} ) \leq e^\varepsilon  Q_n( B \ | \ \mathbf{X} = \mathbf{y} ) + \delta,
\end{equation} 
\end{definition}

From Definition~\ref{epsdelta}, the parameter $\varepsilon>0$ measures the strength of the privacy guarantee: smaller values impose stronger privacy, while larger values allow the output distribution to depend more strongly on any single record. The parameter $\delta \in [0,1)$ controls the probability of privacy losses beyond $e^\varepsilon$. When Definition \ref{epsdelta} holds with $\delta=0$, the data-release mechanism is said to satisfy $\varepsilon$-DP. This notion of privacy admits a clear statistical interpretation: as shown by \cite{wasserman}, under $\varepsilon$-DP, any level-$\gamma$ test of $X_i=s$ versus $X_i=t$ has power at most $\gamma e^\varepsilon$.

Definition~\ref{epsdelta} provides a worst-case privacy guarantee: the same values of $\varepsilon$ and $\delta$ must work uniformly over all possible neighbouring datasets, which might be too stringent in some case. We therefore also use a data-dependent relaxation of $(\varepsilon,\delta)$-DP, in which the privacy requirement is imposed only at the observed dataset. Specifically, condition \eqref{check} is required to hold only for $\mathbf x$ and its neighbouring datasets, leading to the following notion, introduced by \cite{Soria_Comas_2017}.

\begin{definition}[Instance-level $(\varepsilon,\delta)$-DP]\label{epsdeltainstance}
Let $\mathbf{x} \in \mathbb{X}^n$, $\varepsilon > 0$ and $\delta \in [0,1)$. We say that a data-release mechanism $Q_n$ satisfies instance-level $(\varepsilon,\delta)$-DP if for all $B \in \mathcal{X}^m$ and any $\mathbf{y} \in \mathbb{X}^n$ such that $h(\mathbf{x},\mathbf{y}) = 1$, inequality (\ref{check}) is satisfied.
\end{definition}

For instance-level $(\varepsilon,\delta)$-DP, the values of $\varepsilon$ and $\delta$ may depend on $\mathbf x$. This relaxation still admits an interpretation in terms of hypothesis testing. Suppose that, for a given $\mathbf x\in\mathbb X^n$, the data-release mechanism satisfies (\ref{check}) for every $B\in\mathcal X^m$ and every $\mathbf y\in\mathbb X^n$ such that $h(\mathbf x,\mathbf y)=1$. Consider an attacker who observes the released data and knows the whole confidential dataset $\mathbf X=\mathbf x$, except for the value of one entry $X_i$. Then any level-$\gamma$ test of $H_0:X_i=s$ versus $H_1:X_i=t$ has power at most $e^\varepsilon\gamma+\delta$. In practice, $(\varepsilon,\delta)$-DP can be proved using the notion of $(\varepsilon,\delta)$-probabilistic DP.

\begin{definition}[$(\varepsilon,\delta)$-probabilistic DP]
\label{prob_privacy}
    Let $\varepsilon > 0$ and $\delta \in [0,1)$. We say that a data-release mechanism $Q_n$ satisfies $(\varepsilon,\delta)$-probabilistic DP  if, given $q_n(\cdot \mid \mathbf{x} )$ density of $Q_n( \cdot | \mathbf{X} = \mathbf{x})$, it holds $Q_n( A_{\mathbf{x},\mathbf{y}} \ | \ \mathbf{X} = \mathbf{x} ) > 1 - \delta$ for all $\mathbf{x},\mathbf{y}$ such that $ \ h(\mathbf{x},\mathbf{y}) = 1$, where $A_{\mathbf{x},\mathbf{y}} := \{ \mathbf{z} \in \mathbb{X}^m: \ q_n(\mathbf{z} \mid \mathbf{x})  \leq e^{\varepsilon} q_n(\mathbf{z} \mid \mathbf{y}) \}$.
 \end{definition}
 
If the Definition \eqref{prob_privacy} holds for a fixed $\mathbf x \in \mathbb X^n$ and all $\mathbf{y}$ such that $\ h(\mathbf{x},\mathbf{y}) = 1$, then the data-release mechanism $Q_n$ satisfies instance-level $(\varepsilon,\delta)$-probabilistic DP. \cite{gotz2011publishing} prove that $(\varepsilon,\delta)$-probabilistic DP implies $(\varepsilon,\delta)$-DP. Similar arguments show the same for instance-level DP guarantees.

\subsection{Pitman--Yor process prior}\label{subsec:py}

We next recall the definition of the PY process prior \citep{perman1992sizebiased,pitmanyor} and the sampling properties used throughout the paper \citep{Pitman1995ExchangeableAP}. A comprehensive account of the PY process can be found in \citet[Chapters~3 and~4]{Pitman2006CombinatorialSP} and the references therein.

Let $\mathcal P(\mathbb X)$ be the set of probability measures on $\mathbb X$, and let $H\in\mathcal P(\mathbb X)$ be a non-atomic probability measure. We write $\tilde p\sim\mathscr{PY}(\sigma,\theta,H)$ for a PY process with discount parameter $\sigma$, strength parameter $\theta$, and base measure $H$. The admissible parameter space is given either by $\sigma\in[0,1)$ and $\theta>-\sigma$, or by $\sigma<0$ and $\theta=z|\sigma|$ for some $z\in\mathbb{N}$. The definition of the PYP depends on the sign of $\sigma$:
\begin{itemize}
\item[(i)] if $\sigma\in[0,1)$, then
\begin{equation}\label{pyp01}
\tilde p=\sum_{i=1}^{\infty} W_i\delta_{Y_i},
\end{equation}
where $W_i=V_i\prod_{\ell=1}^{i-1}(1-V_\ell)$, $V_i\overset{\mathrm{ind}}{\sim}\mathrm{Beta}(1-\sigma,\theta+i\sigma)$ and $Y_i\overset{\mathrm{iid}}{\sim}H$, independently of $(V_{i})_{i\geq1}$;
\item[(ii)] if $\sigma<0$, then
\begin{equation}\label{pyp}
\tilde p=\sum_{i=1}^{z}\theta_i\delta_{Y_i},
\end{equation}
for $z\in\mathbb{N}$, where $(\theta_1,\ldots,\theta_z)\sim \mathrm{Dirichlet}(|\sigma|,\ldots,|\sigma|)$ and $Y_i\overset{\mathrm{iid}}{\sim}H$, independently of $(\theta_1,\ldots,\theta_z)$.
\end{itemize}

For every admissible value of $\sigma\in(-\infty,1)$, the PY process \eqref{pyp01}--\eqref{pyp} defines a discrete random probability measure on $\mathbb X$; that is, its realizations of the PY process are almost surely discrete probability measures. The structure of its support depends on the sign of $\sigma$. When $\sigma\in[0,1)$, the random measure has countably many atoms, with weights given by \eqref{pyp01}. The special case $\sigma=0$, denoted by $\mathscr{D}(\theta,H)$, is the Dirichlet process with strength parameter $\theta>0$ and base measure $H$ \citep{dirichlet}. When $\sigma<0$ and $\theta=z|\sigma|$ for some $z\in\mathbb N$, the PY process has exactly $z$ atoms, as in \eqref{pyp}; at the sampling level, this corresponds to a parametric Dirichlet--Multinomial model.

Because of the discreteness of $\tilde p\sim\mathscr{PY}(\sigma,\theta,H)$, a random sample $X_1,\ldots,X_n \mid \tilde p \overset{\mathrm{iid}}{\sim} \tilde p$ contains ties with positive probability. Accordingly, the sample induces a random partition of $\{1,\ldots,n\}$ whose blocks are the classes induced by the equivalence relation $i\sim j\iff X_{i}=X_{j}$. We denote by $\{X_1^*,\ldots,X_{K_n}^*\}$ the $K_n=k_{n}\leq n$ distinct values in the sample, and by $(N_1,\ldots,N_{K_n})=(n_{1},\ldots,n_{k_{n}})$, with $\sum_{i=1}^{k_{n}}n_{i}=n$, their corresponding frequencies. The random partition induced by a PY process admits a simple sequential construction through the posterior-predictive distribution of $\tilde p$. More precisely, if $X_{1},\ldots,X_{n}$ is a random sample from $\tilde p\sim\mathscr{PY}(\sigma,\theta,H)$, and $\mathbb{P}[X_{1}\in\cdot]=H(\cdot)$, then for $n\geq1$ the posterior-predictive distribution of $\tilde p$ is
\begin{equation}\label{predPy}
\mathbb P[X_{n+1}\in \cdot \mid X_{1},\ldots,X_{n}]=\frac{\theta+\sigma k_n}{\theta+n}H(\cdot)+\sum_{j=1}^{k_n}\frac{n_j-\sigma}{\theta+n}\delta_{X_j^*}(\cdot).
\end{equation}
Thus, $X_{n+1}$ is either a new value, i.e. not observed in $(X_{1},\ldots,X_{n})$, drawn from $H$, with probability
\begin{displaymath}
\frac{\theta+\sigma k_n}{\theta+n},
\end{displaymath}
or it coincides with a previously observed value, say $X_j^*$ observed in $(X_{1},\ldots,X_{n})$,  with probability
\begin{displaymath}
\frac{n_j-\sigma}{\theta+n},\qquad j=1,\ldots,k_n.
\end{displaymath}
Since $H$ is non-atomic, a draw from $H$ generates a new value with probability one. In the finite-atom case $\sigma<0$, the probability of generating a new value becomes zero when $K_n=z$, consistently with \eqref{pyp}.

The posterior-predictive distribution \eqref{predPy}, known as the Chinese restaurant process \citep{Pitman1995ExchangeableAP}, gives an interpretation of $\sigma$ and $\theta$. The strength parameter $\theta$ controls the baseline tendency to generate new values: larger values of $\theta$ increase the probability of sampling previously unseen categories, especially at small and moderate sample sizes. The discount parameter $\sigma$ controls the reinforcement mechanism and the growth of the number of distinct values. When $\sigma=0$, the predictive probability of an old value is proportional to its frequency, and the probability of a new value is $\theta/(\theta+n)$, as in the Dirichlet process. When $\sigma>0$, the mass assigned to each observed value is discounted from $n_j$ to $n_j-\sigma$, and the total discounted mass $\sigma k_n$ is transferred to the probability of generating a new value. Thus larger values of $\sigma$ favour the appearance of new categories and lead to heavier-tailed random discrete distributions. In fact, for $\sigma\in(0,1)$, the number of distinct values grows polynomially, of order $n^\sigma$, whereas under the Dirichlet process it grows logarithmically. When $\sigma<0$, instead, the model has a finite number of possible atoms.

%%%%%%%%%%%%%%%%%%%%%%%%%%%%%%%%
%%%%%%%%%%%%%%%%%%%%%%%%%%%%%%%%
%%%%%%%%%%%%%%%%%%%%%%%%%%%%%%%%
%%%%%%%%%%%%%%%%%%%%%%%%%%%%%%%%

\section{Pitman--Yor posterior-predictive mechanism}\label{our approach}

For $n\geq1$, let $\mathbf X=(X_1,\ldots,X_n)$ be the confidential dataset, and assume that $\mathbf X$ is modeled as a random sample from an unknown discrete distribution $\tilde{p}$ endowed with a PY process prior, namely
\begin{equation}\label{model1}
\begin{aligned}
X_1,\ldots,X_n \mid \tilde p &\overset{\mathrm{iid}}{\sim} \tilde p,\\
\tilde p &\sim \mathscr{PY}(\sigma,\theta,H).
\end{aligned}
\end{equation}
This model is suited to discrete data, since the PY process is a discrete random probability measure and therefore accommodates ties, repeated values, and a potentially large or unknown number of categories. Conditional on $\mathbf X$, we generate a synthetic dataset $\mathbf Z=(Z_1,\ldots,Z_m)$ by sampling sequentially from the posterior-predictive distribution induced by \eqref{model1}. The procedure relies on the one-step predictive distribution in \eqref{predPy}, iterated \(m\) times, and it is described in Algorithm~\ref{algo}.

\begin{algorithm}[t]
\caption{PYP posterior-predictive mechanism}
\label{algo}
\footnotesize\ttfamily
\begin{algorithmic}[1]
\REQUIRE Dataset $\mathbf X=(X_1,\ldots,X_n)$, synthetic sample size $m$, and PYP parameters $(\sigma,\theta,H)$.
\ENSURE Synthetic dataset $\mathbf Z=(Z_1,\ldots,Z_m)$.

\STATE Let $X_1^*,\ldots,X_k^*$ denote the distinct values in $\mathbf X$, with frequencies $n_1,\ldots,n_k$.
\STATE Set $N=n$.

\FOR{$j=1,\ldots,m$}
    \STATE Sample
    \[
    C_j\sim\mathrm{Categorical}\left(
    \frac{n_1-\sigma}{\theta+N},
    \ldots,
    \frac{n_k-\sigma}{\theta+N},
    \frac{\theta+\sigma k}{\theta+N}
    \right).
    \]
    \IF{$C_j\leq k$}
        \STATE Set $Z_j\leftarrow X_{C_j}^*$ and update $n_{C_j}\leftarrow n_{C_j}+1$.
    \ELSE
        \STATE Sample $Z_j\sim H$.
        \STATE Set $k\leftarrow k+1$, $X_k^*\leftarrow Z_j$, and $n_k\leftarrow 1$.
    \ENDIF
    \STATE Set $N\leftarrow N+1$.
\ENDFOR

\STATE Return $\mathbf Z$.
\end{algorithmic}
\end{algorithm}

We refer to Algorithm~\ref{algo} as the PY posterior-predictive mechanism. This synthetic data-generating mechanism is straightforward to implement: it only requires sequentially sampling from the PY posterior-predictive distribution \eqref{predPy}. Thus, no dedicated sampling or optimization routine is needed to obtain synthetic data, in contrast with alternative approaches, such as the exponential mechanism of \cite{mcsherry2007mechanism}, which often require problem-specific implementation.

Before studying the DP guarantees of the PY posterior-predictive mechanism, we clarify the role of the non-atomic base measure \(H\) and the meaning of categories, or distinct values. The data-release mechanism covers two common situations. In the first, the confidential data are real-valued, but the recorded observations exhibit ties because of rounding, finite measurement precision, discretization, or other preprocessing steps. In this case, the categories are simply the distinct recorded values. Since the posterior-predictive distribution may generate previously unseen values, a fixed post-processing map can be applied after sampling if the released data are required to lie on the same recorded scale as the confidential data. This map may merge some of the new categories, possibly with categories already observed in the recorded data, but it does not affect the DP guarantees, provided that it is chosen independently of the confidential data.

In the second situation, the confidential data are categorical, with recorded observations  taking values in a set of labels, such as occupations, products, locations, or species names. In this case, each observed label can be encoded by a distinct identifier in \(\mathbb X\). The non-atomicity of \(H\) then ensures that, whenever the posterior-predictive distribution selects the \(H\)-component, it generates a new identifier that almost surely does not coincide with any previously observed one. A post-processing step is then needed to map the \(\mathbb X\)-valued identifiers back to labels. Identifiers corresponding to observed categories are mapped back to their original labels, whereas new identifiers sampled from \(H\) may be assigned synthetic labels, such as \texttt{new\_category\_1}, \texttt{new\_category\_2}, and so on.

\subsection{DP guarantees}

The three regimes determined by the sign of the discount parameter $\sigma\in(-\infty,1)$, namely $\sigma\in(0,1)$,  $\sigma=0$ and $\sigma<0$, lead to markedly different DP guarantees for the PY posterior-predictive mechanism. We begin with $\sigma\in[0,1)$, for which we establish sufficient conditions for instance-level $(\varepsilon,\delta)$-DP. This weaker notion of privacy is the natural one for $\sigma\in[0,1)$, where the flexibility of the PY process prior, in terms of $\sigma$ and $\theta$, makes uniform control over all datasets too stringent. 

Fix a confidential dataset $\mathbf x=(x_1,\ldots,x_n)$ with $K_n=j$ distinct values $x_1^*,\ldots,x_j^*$ and frequencies $\mathbf n=(n_1,\ldots,n_j)$, where $n_i\geq1$, and denote by $K_m^{(n)}$ the number of new distinct values in the synthetic dataset $\mathbf Z=(Z_1,\ldots,Z_m)$ to be released. In particular, denote by $S_i:=\sum_{1\leq r\leq m} \mathbf 1_{\{x_i^*\}}(Z_r)$, for $i=1,\ldots,j$,  the number of times $x_i^*$ appears in $\mathbf{Z}$. If $Q_{\mathbf x}$ denotes the conditional distribution of $\mathbf Z$ given $\mathbf x$, then proving that the PY posterior-predictive mechanism satisfies instance-level \((\varepsilon,\delta)\)-DP requires to control the privacy loss between the output distribution \(Q_{\mathbf x}\), with density function \(q_n(\cdot\mid\mathbf x)\), and the output distribution corresponding to a neighbouring dataset \(\widetilde{\mathbf x}\), with density function \(q_n(\cdot\mid\widetilde{\mathbf x})\). Namely, we bound the likelihood ratio $q_n(\mathbf z\mid\mathbf x)/q_n(\mathbf z\mid\widetilde{\mathbf x})$ by \(e^\varepsilon\), except on a set \(\mathbf z\) having \(Q_{\mathbf x}\)-probability at most \(\delta\). This exceptional set is the \emph{bad privacy event}: on this event, $\mathbf{Z}$ is too much more likely under \(\mathbf x\) than under  \(\widetilde{\mathbf x}\), and therefore may distinguish the contribution of one individual beyond the allowed privacy level. Every neighbouring dataset is obtained by replacing one occurrence of an old value \(x_l^\ast\) in \(\mathbf x\) by a value \(b\neq x_l^\ast\). There are two cases: i) old-to-old replacement, namely \(b\) is another old value, say \(b=x_t^\ast\) with \(t\neq l\); ii) old-to-new replacement, namely \(b\) is a new value, not present among \(x_1^\ast,\ldots,x_j^\ast\).

First consider an old-to-old replacement, in which one occurrence of \(x_l^\ast\) is replaced by \(x_t^\ast\), with $t \neq l$. In the proof of
Theorem~\ref{theorem_privacy_main}, we show that the corresponding bad event can be written as
\[
\mathcal E_{l,t}^{\mathrm{old}}(\varepsilon) :=
\begin{cases}
\left\{
\dfrac{n_t-\sigma}{n_t+S_t-\sigma}
\dfrac{n_l+S_l-1-\sigma}{n_l-1-\sigma}
> e^\varepsilon
\right\}, & n_l\ge 2, \\[0.5cm]
\{S_l\ge 1\}
\cup
\left(
\{S_l=0\}
\cap
\left\{
\dfrac{\theta+(j+K_m^{(n)}-1)\sigma}{\theta+(j-1)\sigma}
\dfrac{n_t-\sigma}{n_t+S_t-\sigma}
> e^\varepsilon
\right\}
\right), & n_l=1 .
\end{cases}
\]
In the case \(n_l=1\), the replacement deletes \(x_l^\ast\) from \(\widetilde{\mathbf x}\). Since \(\mathbf Z\) is generated from \(Q_{\mathbf x}\), the value \(x_l^\ast\) may still appear in \(\mathbf Z\). Thus the event \(\{S_l\ge 1\}\) has positive \(Q_{\mathbf x}\)-probability but, because \(H\) is non-atomic, has \(Q_{\widetilde{\mathbf x}}\)-probability zero; it is therefore automatically a bad event. On the complementary event \(\{S_l=0\}\), the deleted singleton is not released, so there is no singularity. However, the two posterior-predictive distributions still differ: under \(\mathbf x\), there is one more old distinct value, while under \(\widetilde{\mathbf x}\) the count of distinct values \(x_t^\ast\) has been increased by one. The residual likelihood ratio is therefore affected by \(K_m^{(n)}\) and by \(S_t\); if many new distinct values are generated, or if few copies of \(x_t^\ast\) are released, the output is relatively more compatible with \(\mathbf x\) than with \(\widetilde{\mathbf x}\), and the privacy loss may exceed \(\varepsilon\). In the case \(n_l\ge 2\), the label \(x_l^\ast\) remains present after the replacement, so there is no singular event. The privacy loss is instead driven by how the probability mass is allocated between two affected values, say $x^{\ast}_{l}$ and $x^{\ast}_{t}$. Under \(\mathbf x\), the counts of \(x_l^\ast\) and \(x_t^\ast\) are \(n_l\) and \(n_t\), respectively, whereas under \(\widetilde{\mathbf x}\) they are \(n_l-1\) and \(n_t+1\). Hence,
\[
\dfrac{n_t-\sigma}{n_t+S_t-\sigma}
\dfrac{n_l+S_l-1-\sigma}{n_l-1-\sigma}.
\]
captures the fact that releases with many copies of \(x_l^\ast\) and few copies of \(x_t^\ast\) favour \(\mathbf x\) over \(\widetilde{\mathbf x}\).

Next consider an old-to-new replacement, in which one occurrence of \(x_l^\ast\) is replaced by a value \(b\notin\{x_1^\ast,\ldots,x_j^\ast\}\). In the proof of Theorem~\ref{theorem_privacy_main}, we show that the corresponding bad event can be written as
\[
\mathcal E_l^{\mathrm{new}}(\varepsilon) :=
\begin{cases}
\left\{
\dfrac{\theta+j\sigma}{\theta+(j+K_m^{(n)})\sigma}
\dfrac{n_l+S_l-1-\sigma}{n_l-1-\sigma}
> e^\varepsilon
\right\}, & n_l\ge 2, \\[0.5cm]
\{S_l\ge 1\}, & n_l=1,
\end{cases}
\]
whose interpretation is analogous to the one of  $\mathcal E_{l,t}^{\mathrm{old}}(\varepsilon)$. The two families of events
\(\mathcal E_{l,t}^{\mathrm{old}}(\varepsilon)\) and \(\mathcal E_l^{\mathrm{new}}(\varepsilon)\) enumerate the releases for
which a one-record replacement makes $\mathbf{Z}$ too likely under \(\mathbf x\) relative to a neighbour. Controlling their
\(Q_{\mathbf x}\)-probabilities controls the instance-level privacy loss.

\begin{theorem}[Instance-level $(\varepsilon,\delta)$-DP, \(\sigma\in[0,1)\)]\label{theorem_privacy_main}
Assume \(Q_{\mathbf x}\) is induced by the \(m\)-step posterior-predictive distribution of \(\mathscr{PY}(\sigma,\theta,H)\), where \(\sigma\in[0,1)\), \(\theta>-\sigma\), and \(H\) is a nonatomic probability measure. Let $\varepsilon>0$. With the convention that the maximum over an empty set is zero, define
\begin{equation}\label{delta_sharp}
\delta^\sharp(\varepsilon,\mathbf x) := \max\left\{ \max_{1\le l\le j} Q_{\mathbf x}\left( \mathcal E_l^{\mathrm{new}}(\varepsilon) \right),  \max_{\substack{1\le l,t\le j\\ t\neq l}} Q_{\mathbf x}\left( \mathcal E_{l,t}^{\mathrm{old}}(\varepsilon) \right) \right\}.
\end{equation}
Then the PY posterior-predictive mechanism satisfies instance-level \((\varepsilon,\delta)\)-DP at \(\mathbf x\) for every
\begin{displaymath}
\delta\in[0,1) \quad\text{such that}\quad \delta>\delta^\sharp(\varepsilon,\mathbf x).
\end{displaymath}
\end{theorem}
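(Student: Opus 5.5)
The plan is to prove instance-level $(\varepsilon,\delta)$-probabilistic DP at $\mathbf x$ and then invoke the implication of \cite{gotz2011publishing}, in its instance-level form. Since $H$ is nonatomic, take as dominating measure for $Q_{\mathbf x}$ and $Q_{\widetilde{\mathbf x}}$ the sum of counting measure on configurations of old labels and $H$ on new coordinates. Concretely, a release $\mathbf z$ is described by the assignment of each $z_r$ either to an old label $x_i^\ast$ or to a new value. The new values form a partition into $K_m^{(n)}$ blocks with sizes $m_1,\dots,m_{K}$, and each block carries a location drawn from $H$.

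Iterating \eqref{predPy} gives the standard closed form
\[
q_n(\mathbf z\mid\mathbf x)=\frac{\prod_{i=1}^{K_m^{(n)}-1}(\theta+(j+i)\sigma)\cdot(\theta+j\sigma)}{(\theta+n)_{m}}\prod_{i=1}^{j}\frac{\Gamma(n_i+S_i-\sigma)}{\Gamma(n_i-\sigma)}\prod_{r=1}^{K_m^{(n)}}\frac{\Gamma(m_r-\sigma)}{\Gamma(1-\sigma)},
\]
up to the $H$-density factors of the new atoms, which cancel between neighbours. Here $(a)_m$ is the rising factorial, and the leading factor can be written as $\prod_{i=0}^{K-1}(\theta+(j+i)\sigma)$.

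Next, fix a neighbour $\widetilde{\mathbf x}$ obtained by replacing one occurrence of $x_l^\ast$. The denominators $(\theta+n)_m$ coincide, as do the new-block factors, because the new values are the same values in both cases, except in the singular cases. I then compute the ratio $q_n(\mathbf z\mid\mathbf x)/q_n(\mathbf z\mid\widetilde{\mathbf x})$ in each of the four cases.

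\textbf{Old-to-old, $n_l\ge2$.} Only the factors for $l$ and $t$ change. Using $\Gamma(a+S)/\Gamma(a)$ with $a=n_l-\sigma$ versus $n_l-1-\sigma$, the ratio telescopes to $\frac{n_l+S_l-1-\sigma}{n_l-1-\sigma}\cdot\frac{n_t-\sigma}{n_t+S_t-\sigma}$.

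\textbf{Old-to-old, $n_l=1$.} If $S_l\ge1$, then $\mathbf z$ has $Q_{\widetilde{\mathbf x}}$-density zero, because a copy of $x_l^\ast$ could only arise from $H$, which is nonatomic. These $\mathbf z$ are therefore placed in the bad set. If $S_l=0$, the factor for $l$ equals $1$ under $\mathbf x$. However, $\widetilde{\mathbf x}$ has $j-1$ distinct values, so its new-value factor is $\prod_{i=0}^{K-1}(\theta+(j-1+i)\sigma)$. Dividing telescopes to $\frac{\theta+(j+K-1)\sigma}{\theta+(j-1)\sigma}$, to be multiplied by the $t$-factor $\frac{n_t-\sigma}{n_t+S_t-\sigma}$.

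\textbf{Old-to-new, $n_l\ge2$.} Here $\widetilde{\mathbf x}$ has $j+1$ distinct values, one of which is $b$ with count $1$. The released new values almost surely avoid $b$, up to a null set, and $b$ appears with $S_b=0$. The distinct-count factor then yields $\frac{\theta+j\sigma}{\theta+(j+K)\sigma}$, and the $l$-factor gives the same ratio as before.

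\textbf{Old-to-new, $n_l=1$.} The count is $j$ in both cases. With $S_l=0$, the ratio equals $1$, since $b$ acts as a singleton with $S_b=0$ contributing factor $1$; it is at most $e^\varepsilon$. With $S_l\ge1$, the situation is singular. There is one subtlety: $\mathbf z$ could contain $b$ under $\widetilde{\mathbf x}$, but this concerns only the direction $Q_{\mathbf x}$, where $b$ has probability zero.

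In every case the complement $A_{\mathbf x,\widetilde{\mathbf x}}^c$ coincides, up to $Q_{\mathbf x}$-null sets, with $\mathcal E^{\mathrm{old}}_{l,t}(\varepsilon)$ or $\mathcal E^{\mathrm{new}}_l(\varepsilon)$. Hence $Q_{\mathbf x}(A_{\mathbf x,\widetilde{\mathbf x}})\ge 1-\delta^\sharp>1-\delta$ uniformly over neighbours. The strict inequality in Definition~\ref{prob_privacy} is exactly why we require $\delta>\delta^\sharp$. Finally, for any $B$, write $Q_{\mathbf x}(B)\le Q_{\mathbf x}(B\cap A)+Q_{\mathbf x}(A^c)\le e^\varepsilon Q_{\widetilde{\mathbf x}}(B)+\delta$, which is the Götz et al.\ argument specialised to one $\mathbf x$.

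The main obstacle is setting up the densities rigorously with respect to a common dominating measure. The old labels are atoms, while the new values come from $H$; the measure must make the singular sets such as $\{S_l\ge1\}$ visible as zero-density regions. It must also justify that the neighbour's extra atom $b$ is $Q_{\mathbf x}$-null. I would handle this by working with the exchangeable partition probability function representation conditional on the label pattern, integrating the $H$-parts out explicitly.
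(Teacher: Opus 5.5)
Your proposal is correct and follows essentially the same route as the paper's proof. Both reduce to instance-level probabilistic DP via the G\"otz et al.\ argument, then carry out the same four-case computation of $q_n(\mathbf z\mid\mathbf x)/q_n(\mathbf z\mid\widetilde{\mathbf x})$ from the Pitman--Yor posterior-predictive density, treating $\{S_l\ge1\}$ as a singular bad event when $n_l=1$ and discarding $\{\exists r: Z_r=b\}$ as $Q_{\mathbf x}$-null; your explicit closed-form density and common dominating measure only spell out what the paper states more briefly.
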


The quantity $\delta^\sharp$ can be approximated by Monte Carlo simulation, since \eqref{predPy} provides a direct sampling rule from the PY posterior-predictive distribution. The PY posterior-predictive mechanism can therefore be calibrated as follows. For a prescribed value of $\varepsilon$ and fixed parameters $(\sigma,\theta)$, one estimates the probabilities in \eqref{delta_sharp} by means of Monte Carlo simulation, thereby obtaining a lower bound on the admissible value of $\delta$. If this value is acceptable for the application at hand, the synthetic dataset $\mathbf{Z}$ is released according to Algorithm~\ref{algo}; otherwise, the procedure can be repeated with different choices of $(\sigma,\theta)$. If no choice of parameters yields an acceptable $\delta$, the mechanism cannot provide the desired DP guarantee for the prescribed $\varepsilon$. In the special case $\sigma=0$, the expression for $\delta^\sharp$ can be made explicit, leading to the next corollary.

\begin{corollary}[Instance-level $(\varepsilon,\delta)$-DP, $\sigma=0$]\label{cor_instance_dp_dirichlet}
Assume \(Q_{\mathbf x}\) is induced by the \(m\)-step posterior-predictive distribution of \(\mathscr{D}(\theta,H)\), where \(\theta>0\) and \(H\) is a nonatomic probability measure. Let \(\varepsilon>0\).
For \(i=1,\ldots,j\), set
\begin{equation}\label{eq:ki_def}
k(r,\varepsilon) :=
        \begin{cases}
1, & r=1, \\[0.5cm]
\left\lfloor (e^\varepsilon-1)(r-1) \right\rfloor+1, & r\ge 2.
        \end{cases}
\end{equation}
Define
\begin{displaymath}
\delta^\sharp_0(\varepsilon,\mathbf x) = \max_{1\le i\le j} \sum_{s=k_i(\varepsilon)}^m \binom ms \frac{ (n_i)_s (\theta+n-n_i)_{m-s} }{ (\theta+n)_m },
\end{displaymath}
with $k_i(\varepsilon):=k(n_i,\varepsilon)$ and the convention that the sum is zero if \(k_i(\varepsilon)>m\).
Then the PY posterior-predictive mechanism satisfies instance-level \((\varepsilon,\delta)\)-DP at \(\mathbf x\) for every
\begin{displaymath}
\delta\in[0,1) \quad\text{such that}\quad \delta>\delta^\sharp_0(\varepsilon,\mathbf x).
\end{displaymath}
\end{corollary}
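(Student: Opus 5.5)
The plan is to specialize Theorem~\ref{theorem_privacy_main} to $\sigma=0$. I will show that $\delta^\sharp(\varepsilon,\mathbf x)\le\delta^\sharp_0(\varepsilon,\mathbf x)$; in fact the two are equal. Then every $\delta>\delta^\sharp_0$ also satisfies $\delta>\delta^\sharp$, and the conclusion follows from the theorem. The argument has three steps: simplify the bad events at $\sigma=0$, show the old-to-old events are dominated by the old-to-new ones, and compute the law of $S_i$ under $Q_{\mathbf x}$.

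\emph{Step 1: old-to-new events.} Set $\sigma=0$. The factor $\frac{\theta+j\sigma}{\theta+(j+K_m^{(n)})\sigma}$ equals $1$. So for $n_l\ge2$,
\[
\mathcal E_l^{\mathrm{new}}(\varepsilon)=\left\{\tfrac{n_l+S_l-1}{n_l-1}>e^\varepsilon\right\}=\{S_l>(e^\varepsilon-1)(n_l-1)\}.
\]
Since $S_l$ is an integer and $(e^\varepsilon-1)(n_l-1)\ge0$, this is $\{S_l\ge\lfloor(e^\varepsilon-1)(n_l-1)\rfloor+1\}=\{S_l\ge k_l(\varepsilon)\}$. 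For $n_l=1$ the event is $\{S_l\ge1\}=\{S_l\ge k(1,\varepsilon)\}$. So in both cases $\mathcal E_l^{\mathrm{new}}(\varepsilon)=\{S_l\ge k_l(\varepsilon)\}$.

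\emph{Step 2: old-to-old events.} For $n_l\ge2$ the event is
\[
\left\{\tfrac{n_t}{n_t+S_t}\,\tfrac{n_l+S_l-1}{n_l-1}>e^\varepsilon\right\}.
\]
Since $\tfrac{n_t}{n_t+S_t}\le1$, this event is contained in $\mathcal E_l^{\mathrm{new}}(\varepsilon)$. For $n_l=1$, the factor involving $K_m^{(n)}$ again equals $1$ at $\sigma=0$. The second piece is then $\{S_l=0\}\cap\{n_t/(n_t+S_t)>e^\varepsilon\}$, which is empty because the ratio is at most $1<e^\varepsilon$. Hence $\mathcal E^{\mathrm{old}}_{l,t}(\varepsilon)\subseteq\{S_l\ge1\}=\mathcal E_l^{\mathrm{new}}(\varepsilon)$. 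It follows that
\[
\delta^\sharp(\varepsilon,\mathbf x)=\max_{l}Q_{\mathbf x}(S_l\ge k_l(\varepsilon)).
\]

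\emph{Step 3: law of $S_i$.} Under $\mathscr D(\theta,H)$ the posterior given $\mathbf x$ is $\mathscr D(\theta+n,\cdot)$, and $\mathbf Z$ is an i.i.d.\ sample from this posterior random measure. Consequently $\tilde p(\{x_i^*\})\sim\mathrm{Beta}(n_i,\theta+n-n_i)$, and conditionally on it $S_i$ is Binomial$(m,\tilde p(\{x_i^*\}))$. Equivalently, lumping all other outcomes in \eqref{predPy}, $S_i$ is the count of one colour in a two-colour Pólya urn started with weights $n_i$ and $\theta+n-n_i$. Integrating out the Beta variable gives the beta-binomial law
\[
Q_{\mathbf x}(S_i=s)=\binom ms\frac{(n_i)_s(\theta+n-n_i)_{m-s}}{(\theta+n)_m},
\]
with rising factorials. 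Summing over $s\ge k_i(\varepsilon)$ yields $\delta^\sharp=\delta^\sharp_0$, with an empty sum when $k_i(\varepsilon)>m$.

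The only delicate point is Step~2. There I must check that the displayed form of $\mathcal E^{\mathrm{old}}_{l,t}$, as derived in the proof of the theorem, really makes the $K_m^{(n)}$-dependent factor trivial at $\sigma=0$. I must also check the strict-versus-floor boundary in Step~1. The rest is bookkeeping.
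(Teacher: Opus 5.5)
Your proposal is correct and follows essentially the same route as the paper. You specialize Theorem~\ref{theorem_privacy_main} at $\sigma=0$, reduce each old-to-new event to $\{S_l\ge k_l(\varepsilon)\}$, and absorb the old-to-old events into these via $n_t/(n_t+S_t)\le 1$ (the $K_m^{(n)}$-dependent factor is identically $1$ when $n_l=1$). You then evaluate the tail through the beta-binomial law of $S_i$. Your justification of that law (the Beta marginal of the posterior Dirichlet process, or the two-colour Pólya urn) and of the integer/floor identity fills in details that the paper states without proof.
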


The regime $\sigma=0$ is expected to provide stronger DP guarantees. The reason is that $\sigma$ governs the growth of the number of distinct values: larger values of $\sigma$ lead to more low-frequency categories, which are more vulnerable to disclosure and hence weaken privacy protection. The next theorem shows that, under the Dirichlet process prior, the proposed mechanism satisfies $(\varepsilon,\delta)$-DP.

\begin{theorem}[$(\varepsilon,\delta)$-DP, $\sigma =0$]
\label{cor_dp}
Assume \(Q_{\mathbf x}\) is induced by the \(m\)-step posterior-predictive distribution of \(\mathscr{D}(\theta,H)\), where \(\theta>0\) and \(H\) is nonatomic. Let \(\varepsilon>0\), and define
\[
\bar\delta_{n,m}(\varepsilon) := \max\left\{ \frac{m}{\theta+n+m-1}, \frac{2m}{(\theta+n)(e^\varepsilon-1)} \right\}.
\]
Then the PY posterior-predictive mechanism satisfies \((\varepsilon,\delta)\)-DP for every
\[
\delta\in[0,1) \quad\text{such that}\quad \delta>\bar\delta_{n,m}(\varepsilon).
\]
In particular, for fixed \(\theta\) and \(\varepsilon\),
\[
\bar\delta_{n,m}(\varepsilon) = O\left(\frac mn\right).
\]
\end{theorem}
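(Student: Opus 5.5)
The plan is to obtain the uniform bound from Corollary~\ref{cor_instance_dp_dirichlet}. That corollary already gives, for every fixed $\mathbf x$, instance-level $(\varepsilon,\delta)$-DP whenever $\delta>\delta_0^\sharp(\varepsilon,\mathbf x)$. Since $(\varepsilon,\delta)$-DP is exactly instance-level DP holding at every $\mathbf x$ with the same $\delta$, it suffices to prove
\[
\sup_{\mathbf x\in\mathbb X^n}\delta_0^\sharp(\varepsilon,\mathbf x)\le\bar\delta_{n,m}(\varepsilon).
\]
Recall that $\delta_0^\sharp$ is a maximum over $i$ of the tail $\mathbb P(S_i\ge k_i(\varepsilon))$. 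Here $S_i$ is the number of copies of $x_i^*$ among the $m$ released values. Under the Dirichlet posterior predictive, $S_i$ follows a Beta--Binomial (P\'olya urn) law with $m$ draws and parameters $(n_i,\theta+n-n_i)$; this is exactly the summand in the corollary. So it suffices to bound this tail uniformly in $n_i\in\{1,\dots,n\}$, splitting on whether $n_i=1$ or $n_i\ge2$.

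\emph{Case $n_i=1$.} Here $k_i=1$, and the event is $\{S_i\ge1\}$. I will bound it by a union bound over the $m$ sequential draws. At step $r$ the probability of drawing $x_i^*$ is $(1+S^{(r-1)})/(\theta+n+r-1)$. More cleanly, exchangeability of the P\'olya urn gives that each $Z_r$ equals $x_i^*$ with marginal probability $n_i/(\theta+n)=1/(\theta+n)$, so $\mathbb P(S_i\ge1)\le \mathbb E S_i=m/(\theta+n)$. This is essentially the first term of $\bar\delta_{n,m}$. To get the stated $m/(\theta+n+m-1)$ exactly, I would instead compute
\[
1-\mathbb P(S_i=0)=1-\prod_{r=0}^{m-1}\frac{\theta+n-1+r}{\theta+n+r}=1-\frac{\theta+n-1}{\theta+n+m-1}=\frac{m}{\theta+n+m-1},
\]
using the telescoping product. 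This yields the first term exactly.

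\emph{Case $n_i\ge2$.} Here $k_i=\lfloor(e^\varepsilon-1)(n_i-1)\rfloor+1>(e^\varepsilon-1)(n_i-1)$. I apply Markov's inequality with $\mathbb E S_i=m n_i/(\theta+n)$:
\[
\mathbb P(S_i\ge k_i)\le\frac{m n_i}{(\theta+n)(e^\varepsilon-1)(n_i-1)}\le\frac{2m}{(\theta+n)(e^\varepsilon-1)},
\]
since $n_i/(n_i-1)\le2$ for $n_i\ge2$. This gives the second term. If $k_i>m$ the tail is zero, which is consistent with the bound.

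Taking the maximum over $i$ and then the supremum over $\mathbf x$ gives $\delta_0^\sharp(\varepsilon,\mathbf x)\le\bar\delta_{n,m}(\varepsilon)$. Hence any $\delta>\bar\delta_{n,m}$ with $\delta<1$ works uniformly, which is global $(\varepsilon,\delta)$-DP. The asymptotic claim $O(m/n)$ is immediate because both terms are at most a constant (depending on $\theta,\varepsilon$) times $m/n$.

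The main point to check carefully is the first step: that the bad events in Theorem~\ref{theorem_privacy_main} at $\sigma=0$ indeed reduce to the single-coordinate tails in Corollary~\ref{cor_instance_dp_dirichlet}. In particular, the old-to-old event with $n_l=1$ contains the extra factor $(n_t)/(n_t+S_t)\le1$, and the new-value factor equals $1$ when $\sigma=0$, so the event reduces to $\{S_l\ge1\}$. Since the corollary is assumed, this is given; the remaining work is only the two elementary tail bounds above.
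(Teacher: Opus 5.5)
Your proposal is correct and follows essentially the same route as the paper. Both arguments reduce global DP to a uniform bound on the instance-level quantity $\delta_0^\sharp(\varepsilon,\mathbf x)$ from Corollary~\ref{cor_instance_dp_dirichlet}, compute the singleton tail $\mathbb P(S_i\ge 1)=m/(\theta+n+m-1)$ exactly by the telescoping product, and bound the $n_i\ge 2$ tails by Markov's inequality with $\mathbb E S_i=mn_i/(\theta+n)$ and $n_i/(n_i-1)\le 2$; the paper simply phrases your supremum over $\mathbf x$ as a maximum over multiplicities $r\in\{1,\dots,n\}$ of Beta--Binomial tails.
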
 
This result brings to light a structural limit of the PY posterior-predictive mechanism. The lower bound $\bar\delta_{n,m}(\varepsilon)$ in Theorem \ref{cor_dp} is defined as the maximum of two terms. The second term, $2m/((\theta+n)(e^\varepsilon-1))$, as expected in standard DP mechanisms, decays as $\varepsilon$ increases; however, the first term, the singleton contribution $m/(\theta+n+m-1)$, is independent of $\varepsilon$. Therefore, increasing $\varepsilon$ beyond a certain value does not lead to a decrease in $\bar \delta$, which plateaus at the value $m/(\theta+n+m-1)$. This means that accounting for singletons leads to disruption of the classic differential privacy trade-off between $\varepsilon$ and $\delta$: accepting a larger $\varepsilon$ does not allow to obtain $\delta$ arbitrarily close to zero. It is also impossible to arbitrarily increase the released sample size $m$ by increasing $\varepsilon$.

We finally turn to the regime $\sigma\in(-\infty,0)$. Here the model \eqref{model1} corresponds to a parametric Dirichlet--Multinomial model with at most $z=\theta/|\sigma|$ distinct atoms. This structural constraint improves the DP guarantees of the PY posterior-predictive mechanism, as formalized in the next corollary.

%Proof: $H$ uniform on $[0,1]^r$ implies $C_1 = C_2 = 1$. We take $\sigma = 0$ since we are considering a DP prior.
\begin{corollary}[Instance-level $\varepsilon$-DP, $\sigma <0$]\label{easyprop}
Assume \(Q_{\mathbf x}\) is induced by the \(m\)-step posterior-predictive distribution of $\mathscr{PY}(\sigma,\theta,H)$, where $\theta = z | \sigma |$ with $z = j$ and $\sigma < 0$. Let $\varepsilon>0$. If $m$ satisfies
\begin{equation}\label{cond_dirmult}
    m \leq (| \sigma | +n_i-1)(e^\varepsilon - 1)\text{ for all }i=1,...,j,
\end{equation}
then the PY posterior-predictive mechanism satisfies instance-level $\varepsilon$-DP.
\end{corollary}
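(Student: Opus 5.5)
The plan is to reduce the statement to an explicit likelihood-ratio computation for a Pólya urn, and then bound that ratio pointwise in $\mathbf z$. Pointwise control gives $\delta=0$ directly, with no bad privacy event to estimate.

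\emph{Step 1: identify the output law.} Take $\sigma<0$ and $\theta=j|\sigma|$. The probability of a new value in \eqref{predPy} is $(\theta+\sigma k)/(\theta+N)$. At the start $k=j$, so this probability is $(j|\sigma|-j|\sigma|)/(\theta+n)=0$. The number of distinct values therefore never increases along Algorithm~\ref{algo}, and every $Z_r$ lies in $\{x_1^\ast,\dots,x_j^\ast\}$. Hence $K_m^{(n)}=0$ almost surely. The mechanism is then a $j$-colour Pólya urn with initial weights $n_i+|\sigma|$, that is, a Dirichlet--Multinomial with parameters $(n_1+|\sigma|,\dots,n_j+|\sigma|)$. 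Multiplying the sequential predictive probabilities gives the density with respect to counting measure on the sequences:
\[
q_n(\mathbf z\mid\mathbf x)=\frac{\prod_{i=1}^{j}(n_i+|\sigma|)_{s_i}}{(j|\sigma|+n)_m},
\qquad s_i=S_i(\mathbf z),
\]
where $(a)_s$ is the rising factorial.

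\emph{Step 2: neighbours and the ratio.} In this parametric Dirichlet--Multinomial reading, the $z=j$ atoms are the fixed category set. A neighbouring dataset $\widetilde{\mathbf x}$ moves one record from category $x_l^\ast$ to category $x_t^\ast$, with $t\neq l$, so its counts are $n_l-1$ and $n_t+1$. This is exactly the old-to-old case of Theorem~\ref{theorem_privacy_main}.

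I expect the main obstacle here, namely the degenerate cases $n_l=1$ and old-to-new replacements. In the nonparametric reading these produce singular events such as $\{S_l\ge 1\}$, or a negative new-value weight $\theta+\sigma(j+1)<0$. I would argue that in the Dirichlet--Multinomial model the atoms are the fixed support $\{x_1^\ast,\dots,x_j^\ast\}$ with Dirichlet pseudo-counts $|\sigma|>0$. Then a category with zero count after replacement still has positive weight $|\sigma|$, so the support of $Q_{\widetilde{\mathbf x}}$ equals that of $Q_{\mathbf x}$ and no singular event arises. This is precisely why the condition for $n_i=1$ reads $m\le|\sigma|(e^\varepsilon-1)$.

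With this convention the normalising constant $(j|\sigma|+n)_m$ is common to $\mathbf x$ and $\widetilde{\mathbf x}$, and all factors other than those for $l$ and $t$ cancel. This leaves
\[
\frac{q_n(\mathbf z\mid\mathbf x)}{q_n(\mathbf z\mid\widetilde{\mathbf x})}
=\frac{(n_l+|\sigma|)_{s_l}}{(n_l-1+|\sigma|)_{s_l}}\cdot\frac{(n_t+|\sigma|)_{s_t}}{(n_t+1+|\sigma|)_{s_t}} .
\]

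\emph{Step 3: bound the two factors.} The second factor is at most $1$, since each term of the numerator is smaller than the corresponding term of the denominator. The first factor telescopes:
\[
\frac{(n_l+|\sigma|)_{s_l}}{(n_l-1+|\sigma|)_{s_l}}=\frac{n_l-1+|\sigma|+s_l}{n_l-1+|\sigma|}\le 1+\frac{m}{|\sigma|+n_l-1}.
\]
By \eqref{cond_dirmult}, the right-hand side is at most $e^\varepsilon$.

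The bound holds for every $\mathbf z$ and every neighbour $\widetilde{\mathbf x}$, hence for all $l\neq t$. The set $A_{\mathbf x,\widetilde{\mathbf x}}$ of Definition~\ref{prob_privacy} is therefore the whole output space. Integrating over any $B\in\mathcal X^m$ then gives \eqref{check} with $\delta=0$ at $\mathbf x$, which is instance-level $\varepsilon$-DP.
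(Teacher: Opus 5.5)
Your proposal is correct and follows essentially the same route as the paper. The paper likewise notes that no new values can be generated when $\theta=j|\sigma|$, and that only old-to-old replacements occur (``only Case 1 can occur''). It also asserts that no singularity arises, which is exactly your fixed-support convention with pseudo-counts $|\sigma|$, stated more explicitly in your write-up. It then bounds the same two-factor likelihood ratio pointwise by $1+m/(n_l-1+|\sigma|)\le e^\varepsilon$, giving $\delta=0$.
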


Under the parameter specification of Corollary~\ref{easyprop}, the PY posterior-predictive mechanism reduces to the Dirichlet-Multinomial data-release mechanism of \cite{machana}; see also \cite{wasserman}; see Appendix~\ref{histograms}. Taking the infimum in \eqref{cond_dirmult} over all datasets (corresponding to the worst-case scenario of a singleton), we obtain global $\varepsilon$-DP, recovering the condition $m \leq |\sigma|(e^{\varepsilon}-1)$ from \cite{machana}. In the regime $\sigma\in(-\infty,0)$, the condition~\ref{cond_dirmult} is closely related to the condition used by \cite{wasserman}, namely
\begin{displaymath}
\alpha_i+C_i \geq \frac{m}{e^\varepsilon-1},
\end{displaymath}
where $C_i$ is the count in the $i$th histogram bin for the confidential dataset $\mathbf X$, and $\alpha_i$ is the $i$-th parameter of the Dirichlet distribution, $i=1,\ldots,j$. Overall, our DP guarantees  highlight a privacy--flexibility tradeoff for the PY posterior-predictive mechanism, with respect to the values of the discount parameter $\sigma\in(-\infty,1)$: moving from $\sigma<0$ to $\sigma=0$ and then to $\sigma\in(0,1)$ the model becomes increasingly flexible, but the associated privacy guarantees progressively weaken.

\subsection{Mechanism informativity}\label{mech_consistency}

Having established DP guarantees for the PY posterior-predictive mechanism, we now turn to the utility of the synthetic data to be released. We adopt the frequentist notion of mechanism informativity introduced in \cite{wasserman}.  Assuming that $X_1,\ldots,X_n \overset{\mathrm{iid}}{\sim} \mathfrak p_0$, for some  ``true"  data-generating distribution $\mathfrak p_0$, and that the user has access to the synthetic dataset $\mathbf Z=(Z_{1},\ldots,Z_{m})$ but not to the original confidential data $\mathbf{X}=(X_{1},\ldots,X_{n})$, we explore how inference based on $\mathbf Z$ is informative about aspects of  $\mathfrak p_0$. This distinction is important in practice: a mechanism that releases data with negligible statistical information may provide strong DP guarantees, but it is of little inferential value. We therefore ask whether the empirical distribution of synthetic data converges to the ``true" data-generating distribution as the sample size $m$ grows. Under the frequentist setup, the marginal distribution of $\mathbf Z$ is then the probability measure
\begin{equation}\label{marginal_nu}
\nu_{n,m}(A):=\int_{\mathbb X^n}Q_n(A\mid \mathbf X=\mathbf x)\,\mathfrak p_0(dx_1)\cdots \mathfrak p_0(dx_n),
\end{equation}
for all $A\in\mathcal X^m$. We allow the number of synthetic  observations to depend on the size $n$ of the confidential dataset, writing $m=m(n)$. Accordingly, asymptotic statements as $n\to\infty$ implicitly involve the corresponding sequence of sample sizes $m(n)$. Once a relationship between $m$ and $n$ has been specified, we write $\nu_n$ in place of $\nu_{n,m}$ for the marginal distribution in \eqref{marginal_nu}. Let
\begin{displaymath}
e_m^{(\mathbf Z)}:=\frac1m\sum_{i=1}^m \delta_{Z_i}
\end{displaymath}
denote the empirical measure of the synthetic data $\mathbf Z$. \cite{wasserman} give the following definition of mechanism informativity, which provides a natural measure of the quality of the mechanism.

\begin{definition}[Mechanism informativity]\label{inf}
We say that a data-release mechanism $Q_n$ is informative, or consistent, with respect to the distance $d$ if $d(e_m^{(\mathbf{Z})},\mathfrak{p}_0) \rightarrow 0$ in $\nu_{n}$-probability as $n \to +\infty$. $Q_n$ is $\varepsilon_n$-informative if $\mathbb{E}_{\nu_{n}}[d( e_m^{(\mathbf{Z})},\mathfrak{p}_0)] = O(\varepsilon_n)$, where $(\varepsilon_n)_{n \geq 1}$ is a sequence of positive numbers such that $\varepsilon_n \rightarrow 0$ as $n \rightarrow \infty$.
\end{definition}

Definition~\ref{inf} is formulated for a generic distance between probability measures. In this work, we specialize it to the $1$-Wasserstein distance. We recall the general definition of the $p$-Wasserstein distance; see, e.g., \citet{Ambrosio2013} and \citet{villani2008optimal}. Let $(\mathbb X,d_{\mathbb X})$ be a complete separable metric space. For $p\geq1$ and $\gamma_1,\gamma_2\in\mathcal P_p(\mathbb X)$, the $p$-Wasserstein distance is defined as
\begin{equation}
\label{wasserstein}
\mathcal W_p^{\mathcal P_p(\mathbb X)}(\gamma_1,\gamma_2):=\inf_{\xi\in\Pi(\gamma_1,\gamma_2)}\left(\int_{\mathbb X^2} d_{\mathbb X}(x,y)^p\,\xi(dx,dy)\right)^{1/p},
\end{equation}
where
\[
\mathcal P_p(\mathbb X):=\left\{\gamma\in\mathcal P(\mathbb X):\int_{\mathbb X} d_{\mathbb X}(x,x_0)^p\,\gamma(dx)<\infty\text{ for some } x_0\in\mathbb X\right\}.
\]
Here, $\Pi(\gamma_1,\gamma_2)$ denotes the set of couplings of the probability measures $\gamma_1$ and $\gamma_2$, that is, the set of probability measures on $(\mathbb X^2,\mathcal X^2)$ with marginals $\gamma_1$ and $\gamma_2$. Throughout the paper we assume that $\mathbb X$ is totally bounded, so that $\mathcal P_p(\mathbb X)=\mathcal P(\mathbb X)$ holds for every $p\geq1$. When the underlying space is clear from the context, we omit the superscript $\mathcal P_p(\mathbb X)$ in $\mathcal W_p^{\mathcal P_p(\mathbb X)}$ and simply write $\mathcal W_p$.

For $\mathbb X\subseteq\mathbb R^d$, rates for $\varepsilon_{n,1}(\mathbb X,\mathfrak p_0):=\mathbb E_{\mathfrak p_0^n}[\mathcal W_1(e_m^{(\mathbf X)},\mathfrak p_0)]$ are already available from \cite{fournier2015rate}; see Appendix~\ref{empirical}. In our setting, the additional difficulty in verifying Definition~\ref{inf} is that the empirical measure is computed from the synthetic data $\mathbf Z$, rather than from the original confidential data $\mathbf X$. It is also important to ensure that the DP guarantees of a data-release mechanism are preserved as $n\to\infty$. This imposes constraints on the growth of $m=m(n)$ relative to $n$, and these constraints play a crucial role in the mechanism informativity.  We therefore establish $1$-Wasserstein consistency for the two regimes in which the PY posterior-predictive mechanism provides satisfactory DP guarantees, namely $\sigma\in(-\infty,0)$ and $\sigma=0$. Although the PY posterior-predictive mechanism for $\sigma\in(-\infty,0)$ has already appeared in the literature \citep{machana}, its consistency in the sense of Definition~\ref{inf} has not been established.

\begin{theorem}[$\sigma<0$]
\label{consistenza multinomiale}
    Let $\mathbb{X} \subseteq \mathbb{R}^d$. If $Z_1,...,Z_m$ are synthetic from  the PY posterior-predictive mechanism with $\sigma<0$ and $\alpha_i \geq 2$ for all $i =1,...,z$ and $z \geq 2$, then
    \begin{equation*}
        \mathbb{E}_{\nu_{n,m}}\left [\mathcal{W}_1(e_m^{(\mathbf{Z})},\mathfrak{p}_0) \right ] = O(n^{-1/2}) + \begin{cases}  O(m^{-1/2}) \quad & \text{if } d < 2, \\  O(m^{-1/2}\log(1+m)) \quad & \text{if } d = 2, \\
         O(m^{-1/d}) \quad & \text{if } d > 2. \end{cases} 
    \end{equation*}
\end{theorem}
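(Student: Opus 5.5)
We bound the expected distance by the triangle inequality, splitting it into three terms:
\begin{displaymath}
\mathcal W_1(e_m^{(\mathbf Z)},\mathfrak p_0)\le \mathcal W_1(e_m^{(\mathbf Z)},\tilde p)+\mathcal W_1(\tilde p,\bar p_n)+\mathcal W_1(\bar p_n,\mathfrak p_0).
\end{displaymath}
Here $\tilde p\mid\mathbf X$ is a draw from the posterior. For $\sigma<0$ this is a $\mathrm{Dirichlet}$ law on the $z$ atoms, with parameters $\alpha_i=|\sigma|+C_i$, where $C_i$ is the count of atom $i$ in $\mathbf X$. The quantity $\bar p_n$ is the posterior mean, namely the first-step predictive \eqref{predPy}. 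Since the posterior-predictive mechanism is equivalent to first drawing $\tilde p$ from the posterior and then drawing $Z_1,\dots,Z_m\mid\tilde p\iid\tilde p$, we can condition on $\tilde p$. In this setting $\mathfrak p_0$ is supported on (at most) the $z$ atoms $x^*_1,\dots,x^*_z$ in a bounded $\mathbb X\subseteq\mathbb R^d$, say with diameter $D$.

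\emph{First term.} Conditionally on $\tilde p$, the vector $\mathbf Z$ is an i.i.d.\ sample from a probability measure on $\mathbb X\subseteq\mathbb R^d$. We apply the \cite{fournier2015rate} bound (Appendix~\ref{empirical}) conditionally. Its constant depends only on a moment of $\tilde p$, which is uniformly bounded because $\mathbb X$ is bounded. This gives the case-wise rates $m^{-1/2}$, $m^{-1/2}\log(1+m)$ and $m^{-1/d}$ uniformly in $\tilde p$, and hence after integrating over the posterior and over $\mathfrak p_0^n$. Since the support is actually finite, one could even get $O(\sqrt{z/m})$, but the stated rates follow directly.

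\emph{Second and third terms.} Both measures are supported on the same $z$ atoms, so $\mathcal W_1\le D\cdot\|\cdot\|_{TV}\le \tfrac D2\sum_{i=1}^z|\cdot|$ atomwise.

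For the second term, $\mathbb E[|\tilde p_i-\bar p_{n,i}|\mid\mathbf X]\le \sqrt{\mathrm{Var}(\tilde p_i\mid \mathbf X)}$. The Dirichlet variance satisfies $\mathrm{Var}(\tilde p_i\mid\mathbf X)=\frac{\alpha_i(\alpha_0-\alpha_i)}{\alpha_0^2(\alpha_0+1)}\le \frac1{\alpha_0+1}$, where $\alpha_0=\theta+n$. Summing over the $z$ atoms gives $O(z\,n^{-1/2})$.

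For the third term, $\bar p_{n,i}=(C_i+|\sigma|)/(n+\theta)$. Writing $\mathfrak p_{0,i}$ for the mass of $\mathfrak p_0$ at atom $i$, we split
\begin{displaymath}
|\bar p_{n,i}-\mathfrak p_{0,i}|\le |C_i/n-\mathfrak p_{0,i}|+O(1/n).
\end{displaymath}
Since $\mathbb E|C_i/n-\mathfrak p_{0,i}|\le\sqrt{\mathfrak p_{0,i}(1-\mathfrak p_{0,i})/n}$, this term is also $O(n^{-1/2})$. The hypotheses $\alpha_i\ge2$ and $z\ge2$ are only needed to make the Dirichlet parameters nondegenerate, so that the moments above are well defined. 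They could also serve if one prefers to handle the variance via explicit Beta moments.

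\emph{Main obstacle.} The delicate point is the conditional use of \cite{fournier2015rate}. We must check that its constant is uniform over the random measure $\tilde p$, which is ensured by boundedness of $\mathbb X$. We must also justify replacing the sequential Pólya-urn draw by the de Finetti representation (posterior draw followed by i.i.d.\ sampling), which holds by exchangeability of \eqref{predPy}. A secondary issue is the case where $\mathfrak p_0$ is not supported on the model's atoms. We assume the standard setting of \cite{machana}, in which the $z$ bins or categories are fixed and known; otherwise an additional misspecification term would appear. Summing the three bounds yields the stated $O(n^{-1/2})$ plus the $m$-dependent rate.
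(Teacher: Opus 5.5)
Your proof is correct; it differs from the paper's only in how the posterior-to-truth part is controlled.

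The paper splits into two terms, $\mathcal W_1(e_m^{(\mathbf Z)},\mathscr T(\theta))+\mathcal W_1(\mathscr T(\theta),\mathfrak p_0)$ with $\mathfrak p_0=\mathscr T(\theta_0)$. The first term is handled exactly as you do, by Fournier--Guillin applied conditionally on $\theta$. The second is bounded in three steps: the Lipschitz estimate $\mathcal W_1(\mathscr T(\theta),\mathscr T(\phi))\le D\sqrt{z-1}\,\|\theta-\phi\|_{\ell^2}$ of Lemma~\ref{lemmino}, then Cauchy--Schwarz, then the parametric posterior contraction rate $\mathbb E[\mathcal W_2(\psi_n(\cdot\mid\mathbf X),\delta_{\theta_0})]=O(n^{-1/2})$ of Proposition~\ref{pcrPara}, imported from \cite{dolera2023strong}.

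You instead pass through the posterior mean and exploit conjugacy. The explicit Dirichlet variance controls $\mathcal W_1(\tilde p,\bar p_n)$, and the binomial variance of the counts controls $\mathcal W_1(\bar p_n,\mathfrak p_0)$. This is the strategy the paper itself uses for $\sigma=0$: the split through $H_n$ in the proof of Theorem~\ref{consistencydir}, together with the variance bound in Lemma~\ref{concDP}.

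Your route is elementary and self-contained and gives explicit constants. It works for any $\theta_0$ in the closed simplex, and it needs neither $\alpha_i\ge2$ nor $z\ge2$. Your side remark that finite support gives $O(\sqrt{z/m})$ for the first term, whatever $d$ is, is also correct and sharper than the stated rates. The paper's route does not rely on conjugacy and would cover any prior on the simplex with a $C^1$ density vanishing on the boundary; that requirement is exactly where the hypothesis $\alpha_i\ge2$ comes from.

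Your remark that these hypotheses are needed ``to make the Dirichlet parameters nondegenerate'' is therefore slightly off. The posterior parameters $|\sigma|+C_i$ are positive for every $\sigma<0$, so in your proof the hypotheses are simply superfluous. In the paper they are what licenses the appeal to Proposition~\ref{pcrPara}.
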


Now consider the setting of Theorem~\ref{consistenza multinomiale} and assume further that $m\asymp n$. It turns out that the condition
\begin{equation*}
\label{priv_cond}
    \alpha_i + C_i \geq m / (e^\varepsilon - 1) \text{ for all }i=1,...,z
\end{equation*} 
required in the Dirichlet--Multinomial regime to satisfy instance-level $\varepsilon$-DP is met asymptotically. Indeed, as $n\to\infty$, the bin counts $C_i$, $i=1,\ldots,z$, diverge. To see this, assume that $\mathfrak p_0=\mathscr T(\theta_0)$, where $\mathscr T:\Theta\to\mathcal P(\mathbb X)$ is a bijective parametrization map and $\theta_0=(\theta_{0,1},\ldots,\theta_{0,z})$. Then $(C_1,\ldots,C_z)\sim\mathrm{Multinomial}(n,\theta_0)$, so that each $C_i$ is marginally distributed as a binomial random variable with parameters $(n,\theta_{0,i})$. Therefore, by Chebyshev's inequality,
\begin{displaymath}
\mathbb P\left(\left|C_i/n-\theta_{0,i}\right|>\varepsilon\right)\leq\frac{\theta_{0,i}(1-\theta_{0,i})}{n\varepsilon^2}\to 0.
\end{displaymath}
Thus, provided $\theta_{0,i}>0$, each $C_i$ grows linearly with $n$, at least in probability. Therefore, when $m\asymp n$, Theorem~\ref{consistenza multinomiale} yields
\begin{displaymath}
\mathbb E_{\nu_n}\left[\mathcal W_1\left(e_m^{(\mathbf Z)},\mathfrak p_0\right)\right]=
\begin{cases}
O(n^{-1/2}), & d<2,\\
O(n^{-1/2}\log(1+n)), & d=2,\\
O(n^{-1/d}), & d>2.
\end{cases}
\end{displaymath}
This result shows that, under the Dirichlet--Multinomial regime, the PY posterior-predictive mechanism is informative with the same rate as that obtained by \cite{fournier2015rate}. In this regime, replacing the confidential data by synthetic data therefore entails no loss in the consistency rate.

We next extend Theorem~\ref{consistenza multinomiale} from the regime $\sigma\in(-\infty,0)$ to the regime $\sigma=0$. For $\sigma=0$, the Dirichlet process prior is no longer supported on a fixed finite set of atoms, but allows for countably many possible categories and for the appearance of new values in the synthetic sample. This additional flexibility requires a separate argument to establish $1$-Wasserstein consistency.

\begin{theorem}[$\sigma=0$]
\label{consistencydir}
Let $\mathbb{X}\subseteq \mathbb{R}^d$ be bounded. If $Z_{1},...,Z_{m}$ are synthetic from the PY posterior-predictive mechanism with $\sigma=0$ then
\begin{equation*}
\mathbb{E}_{\nu _{n,m}}\left( \mathcal{W}_{1}\left( e_{m}^{\left( \mathbf{Z}\right) },\mathfrak{p}_{0}\right)
\right) = \varepsilon _{m,1}\left( \mathbb{X}\right) + \begin{cases} O(n^{-1/2}) \quad & \text{if } d < 2,  \\
    O(n^{-1/2}\log(1+n)) \quad & \text{if } d = 2, \\
    O(n^{-1/d})  \quad & \text{if } d > 2 .
    \end{cases} 
\end{equation*}
where $\varepsilon _{m,1}$ is the rate of convergence for the mean of the $1$-Wasserstein distance between $e_{m}^{\left( 
\mathbf{Z}\right) }$ \ and $P|X_1,...,X_n\sim \pi _{n}(\cdot|X_1,...,X_n)$, with $\pi _{n}$ being the distribution of a posterior Dirichlet process.
\end{theorem}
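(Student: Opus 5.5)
Our plan is a triangle-inequality decomposition through the posterior Dirichlet process and its posterior mean. Given $\mathbf X=(X_1,\dots,X_n)$, the $m$-step posterior-predictive sample can be realized hierarchically: draw $P\mid \mathbf X\sim\pi_n(\cdot\mid\mathbf X)=\mathscr D(\theta+n,\bar P_n)$, where
\[
\bar P_n:=\frac{\theta}{\theta+n}H+\frac{n}{\theta+n}e_n^{(\mathbf X)},
\]
and then $Z_1,\dots,Z_m\mid P\overset{\mathrm{iid}}{\sim}P$. This reproduces the sequential scheme of Algorithm~\ref{algo} with $\sigma=0$, by conjugacy and the Blackwell--MacQueen urn representation of \eqref{predPy}. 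On this enlarged space we bound
\[
\mathcal W_1\bigl(e_m^{(\mathbf Z)},\mathfrak p_0\bigr)\le \mathcal W_1\bigl(e_m^{(\mathbf Z)},P\bigr)+\mathcal W_1\bigl(P,\bar P_n\bigr)+\mathcal W_1\bigl(\bar P_n,e_n^{(\mathbf X)}\bigr)+\mathcal W_1\bigl(e_n^{(\mathbf X)},\mathfrak p_0\bigr)
\]
and take $\nu_{n,m}$-expectations, equivalently expectations over $\mathbf X\sim\mathfrak p_0^n$, then $P$, then $\mathbf Z$.

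The first term, averaged, is by definition $\varepsilon_{m,1}(\mathbb X)$; we leave it as stated in the theorem. Conditionally on $P$, one could further bound it by the \cite{fournier2015rate} rate in $m$, since $\mathbb X$ is bounded. The third term is deterministic. Coupling $\bar P_n$ with $e_n^{(\mathbf X)}$ by moving only the mass $\theta/(\theta+n)$ gives
\[
\mathcal W_1\bigl(\bar P_n,e_n^{(\mathbf X)}\bigr)\le \frac{\theta\,\mathrm{diam}(\mathbb X)}{\theta+n}=O(n^{-1}).
\]
The fourth term is exactly $\varepsilon_{n,1}(\mathbb X,\mathfrak p_0)$. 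By \cite{fournier2015rate} (Appendix~\ref{empirical}) it equals $O(n^{-1/2})$, $O(n^{-1/2}\log(1+n))$ or $O(n^{-1/d})$ according as $d<2$, $d=2$ or $d>2$. This yields the displayed case structure.

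The main obstacle is the second term, the ``variance'' of a Dirichlet process around its mean, $\mathbb E[\mathcal W_1(P,\bar P_n)\mid\mathbf X]$ with $P\sim\mathscr D(\alpha,G)$, $\alpha=\theta+n$, $G=\bar P_n$. We must show it is at most of the same order as the Fournier--Guillin rate in $n$, uniformly in $G$.

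For this we would use the dyadic-partition bound underlying \cite{fournier2015rate}. Since $\mathbb X$ is bounded, we may assume $\mathbb X\subseteq[0,1]^d$ after scaling. Then
\[
\mathcal W_1(\mu,\nu)\lesssim\sum_{\ell\ge0}2^{-\ell}\sum_{F\in\mathcal P_\ell}|\mu(F)-\nu(F)|,
\]
where $\mathcal P_\ell$ is the partition into $2^{d\ell}$ dyadic cubes. For the Dirichlet process, $P(F)\sim\mathrm{Beta}(\alpha G(F),\alpha(1-G(F)))$, so
\[
\mathbb E|P(F)-G(F)|\le\sqrt{\frac{G(F)(1-G(F))}{\alpha+1}}.
\]
Summing over $F$ and applying Cauchy--Schwarz gives the level-$\ell$ bound $\min\{2,\,2^{d\ell/2}(\alpha+1)^{-1/2}\}$. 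These are precisely the estimates Fournier--Guillin obtain for multinomial fluctuations with variance $G(F)/n$. Optimizing the truncation level exactly as in their proof gives $O(n^{-1/2})$ for $d<2$, $O(n^{-1/2}\log n)$ for $d=2$ and $O(n^{-1/d})$ for $d>2$, uniformly in $G$. Collecting the four terms then proves the theorem.
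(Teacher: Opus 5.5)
Your proposal is correct and follows essentially the same route as the paper. Both realize $\mathbf Z$ hierarchically via $P\sim\mathscr D(\theta+n,H_n)$ with $Z_i\mid P$ iid, and apply the triangle inequality through $P$ and the posterior mean $H_n$. Both use Fournier--Guillin for the empirical term, and bound $\mathbb E[\mathcal W_1(P,H_n)\mid\mathbf X]$ by a dyadic-partition estimate with Dirichlet/Beta variances of order $(\theta+n+1)^{-1}$ and Cauchy--Schwarz, which is exactly the paper's Lemma~\ref{concDP}. The only cosmetic difference is that you pass through $e_n^{(\mathbf X)}$ with an explicit coupling costing $\theta\,\mathrm{diam}(\mathbb X)/(\theta+n)$, whereas the paper bounds $\mathcal W_1(H_n,\mathfrak p_0)$ directly by convexity of $\mathcal W_1$.
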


We now consider the problem of specifying the asymptotic relation between the private sample size $n$ and the synthetic sample size $m=m(n)$. Since the term $\tilde\delta$ in Theorem~\ref{cor_dp} is of order $O(m/n)$, the asymptotic validity of the assumptions ensuring $(\varepsilon,\delta)$-DP imposes restrictions on the growth of $m$ relative to $n$. One possibility is to fix a target privacy level $\delta\in(0,1)$ and let $m\asymp n$, with a sufficiently small proportionality constant. Indeed, if $m=\lfloor \alpha n\rfloor$, then it holds that
\begin{displaymath}
\limsup_{n\to\infty} \bar\delta_{n,m}(\varepsilon)\leq\max\left\{\frac{\alpha}{1+\alpha},\frac{2\alpha}{e^\varepsilon-1}\right\}.
\end{displaymath}
Therefore, any choice of $\alpha<\min\{\delta/(1-\delta),\delta(e^\varepsilon-1)/2\}$ ensures $(\varepsilon,\delta)$-DP for all sufficiently large $n$.

\begin{corollary}[$\sigma=0$]
\label{consistencydir_epsdeltadp}
Let $\mathbb{X}\subseteq \mathbb{R}^d$ be bounded. If $Z_{1},...,Z_{m}$ are synthetic data from the PY posterior-predictive mechanism with $\sigma=0$ and $m \asymp n$, then
\begin{equation}\label{rate2}
\begin{aligned}
    \mathbb{E}_{\nu _{n}}\left( \mathcal{W}_{1}\left( e_{m}^{\left( \mathbf{Z}\right) },\mathfrak{p}_{0}\right)
\right)  = \begin{cases} O(n^{-1/2}) \quad & \text{if } d < 2,  \\
    O(n^{-1/2}\log(1+n)) \quad & \text{if } d = 2, \\
    O(n^{-1/d})  \quad & \text{if } d > 2 .
    \end{cases}
\end{aligned}
\end{equation}
which is minimax optimal. 
\end{corollary}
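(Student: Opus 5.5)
The corollary follows from Theorem~\ref{consistencydir} once the variance term $\varepsilon_{m,1}(\mathbb X)$ is shown to be of the same order as the empirical-measure rates when $m\asymp n$. Theorem~\ref{consistencydir} already gives the $\mathfrak p_0$-dependent part, namely $O(n^{-1/2})$, $O(n^{-1/2}\log(1+n))$ or $O(n^{-1/d})$ according to $d$. So it remains to bound
\[
\varepsilon_{m,1}(\mathbb X)=\mathbb E\bigl[\mathcal W_1(e_m^{(\mathbf Z)},P)\bigr],\qquad P\mid \mathbf X\sim\pi_n(\cdot\mid\mathbf X),
\]
and then substitute $m\asymp n$.

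\textbf{Step 1 (conditional i.i.d.\ structure).} By the joint exchangeability of $(\mathbf X,\mathbf Z)$ under the Dirichlet process model, the predictive sampling of Algorithm~\ref{algo} is equivalent to the following two-stage draw: first $P\sim\mathscr D(\theta+n,(\theta H+\sum_i\delta_{X_i})/(\theta+n))$, then $Z_1,\ldots,Z_m\mid P\iid P$. Conditionally on $P$, the measure $e_m^{(\mathbf Z)}$ is therefore the empirical measure of $m$ i.i.d.\ draws from a probability measure $P$ on the bounded set $\mathbb X\subseteq\mathbb R^d$.

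\textbf{Step 2 (uniform empirical rate).} I will apply the bound of \cite{fournier2015rate}, recalled in Appendix~\ref{empirical}, conditionally on $P$. For measures supported in a bounded subset of $\mathbb R^d$, its constants depend only on $d$ and on the diameter of $\mathbb X$, because all moments are then uniformly bounded. This gives
\[
\mathbb E\bigl[\mathcal W_1(e_m^{(\mathbf Z)},P)\mid P\bigr]\le C\,r_d(m),
\]
where $r_d(m)$ equals $m^{-1/2}$, $m^{-1/2}\log(1+m)$ or $m^{-1/d}$ according as $d<2$, $d=2$ or $d>2$. The constant $C$ does not depend on $P$. Integrating over $P$ and over $\mathbf X\sim\mathfrak p_0^n$ then yields $\varepsilon_{m,1}(\mathbb X)=O(r_d(m))$.

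\textbf{Step 3 (substitute $m\asymp n$).} With $m\asymp n$ we have $r_d(m)\asymp r_d(n)$; the logarithm changes only by a constant factor. Adding this to the $\mathfrak p_0$-part of Theorem~\ref{consistencydir} gives \eqref{rate2}. For minimax optimality, I will observe that any estimator built from $n$ i.i.d.\ samples, and in particular $e_m^{(\mathbf Z)}$, which is a randomized function of $\mathbf X$, obeys the known minimax lower bounds for estimation in $\mathcal W_1$ over distributions on bounded subsets of $\mathbb R^d$. These bounds are of order $n^{-1/2}$ for $d=1$ and $n^{-1/d}$ for $d\ge3$ (with the standard logarithmic discussion at $d=2$). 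The rate obtained therefore matches the lower bound.

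\textbf{Main obstacle.} The key step is Step 2: verifying that the Fournier--Guillin constant is uniform over the random, data-dependent measure $P$, including its atoms drawn from $H$. This holds because every candidate $P$ is supported in the bounded set $\mathbb X$, so the required moment condition holds uniformly. A secondary point is to check that the lower-bound argument covers randomized estimators, which is standard.
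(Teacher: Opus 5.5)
Your proposal is correct and follows the paper's route: the corollary comes directly from Theorem~\ref{consistencydir}. Its proof bounds $A=\varepsilon_{m,1}(\mathbb X)$ as in your Step~2, applying the Fournier--Guillin bound conditionally on the posterior Dirichlet process, and then substitutes $m\asymp n$. The paper checks the uniform moment bound via the posterior representation and stick-breaking, which your remark that $P$ is supported in the bounded set $\mathbb X$ makes immediate. The paper gives no argument for the minimax claim, so your appeal to standard lower bounds is an addition; note that at $d=2$ the standard lower bound is of order $n^{-1/2}$, so there optimality holds only up to the logarithmic factor.
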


If we make the stronger requirement $\bar{\delta}_{n,m}\rightarrow0$ as $n \to +\infty$, so that $\varepsilon$-DP holds asymptotically, then we need to assume $m=o(n)$. In particular if $m \asymp \frac{n}{\log n}$, then we have the following corollary
\begin{corollary}[$\sigma=0$]
\label{consistencydir_epsdp}
Let $\mathbb{X}\subseteq \mathbb{R}^d$ be bounded. If $Z_{1},...,Z_{m}$ are synthetic data from the PY posterior-predictive mechanism with $\sigma=0$ and $m \asymp \frac{n}{\log n}$, then
\begin{equation}\label{rate1}
\begin{aligned}
    \mathbb{E}_{\nu _{n}}\left( \mathcal{W}_{1}\left( e_{m}^{\left( \mathbf{Z}\right) },\mathfrak{p}_{0}\right)
\right)  = \begin{cases} O(n^{-1/2}\sqrt{\log n}) \quad & \text{if } d < 2,  \\
    O(n^{-1/2}(\log n)^{3/2}) \quad & \text{if } d = 2, \\
    O(n^{-1/d}(\log n)^{1/d})  \quad & \text{if } d > 2 .
    \end{cases}
\end{aligned}
\end{equation}
\end{corollary}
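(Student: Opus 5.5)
This corollary follows by substituting $m\asymp n/\log n$ into Theorem~\ref{consistencydir}. The work is in making the term $\varepsilon_{m,1}(\mathbb X)$ explicit. That term is the rate for $\mathbb E[\mathcal W_1(e_m^{(\mathbf Z)},P)]$, where $P\mid X_1,\ldots,X_n$ is a posterior Dirichlet process. We need to show that, for bounded $\mathbb X\subseteq\mathbb R^d$, it is of order $m^{-1/2}$ if $d<2$, $m^{-1/2}\log(1+m)$ if $d=2$, and $m^{-1/d}$ if $d>2$, uniformly in the data.

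To get this, we use the representation underlying Theorem~\ref{consistencydir}. By exchangeability of the Chinese restaurant process \eqref{predPy}, the $m$-step posterior predictive sample $Z_1,\ldots,Z_m$ is, conditionally on a draw $P\sim\pi_n(\cdot\mid X_1,\ldots,X_n)$, an i.i.d.\ sample from $P$. Hence
\[
\mathbb E[\mathcal W_1(e_m^{(\mathbf Z)},P)\mid P]\le \varepsilon_{m,1}(\mathbb X,P),
\]
where $\varepsilon_{m,1}(\mathbb X,P)$ is the empirical-measure rate for $P$. We then invoke the bound of \cite{fournier2015rate} recalled in Appendix~\ref{empirical}. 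Because $\mathbb X$ is bounded, all moments of $P$ are bounded by a constant depending only on $\operatorname{diam}(\mathbb X)$. The Fournier--Guillin constants are therefore uniform over $P\in\mathcal P(\mathbb X)$, and integrating over $P$ gives the stated $\varepsilon_{m,1}(\mathbb X)$ uniformly in $n$ and in the data.

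Next we put $m\asymp n/\log n$, so that $m^{-1}\asymp n^{-1}\log n$, and handle each dimension.
\begin{itemize}
\item For $d<2$: $m^{-1/2}\asymp n^{-1/2}\sqrt{\log n}$, which dominates the $O(n^{-1/2})$ term of Theorem~\ref{consistencydir}.
\item For $d=2$: $m^{-1/2}\log(1+m)\asymp n^{-1/2}(\log n)^{1/2}\log n=n^{-1/2}(\log n)^{3/2}$, using $\log(1+m)\asymp\log n$. This dominates $O(n^{-1/2}\log(1+n))$.
\item For $d>2$: $m^{-1/d}\asymp n^{-1/d}(\log n)^{1/d}$, which dominates $O(n^{-1/d})$.
\end{itemize}
Adding the two contributions in each case gives \eqref{rate1}. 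The marginal expectation under $\nu_n$ is the iterated expectation over $\mathbf X\sim\mathfrak p_0^n$, then $P$, then $\mathbf Z$, so the uniform bound carries over.

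The only delicate point is the uniformity of the empirical-measure constant over random posterior measures $P$, some of which may be heavily concentrated. If the appendix states the Fournier--Guillin bound only for a fixed measure, I would rederive it from its dyadic-partition proof. In that proof the constant depends only on the diameter and $d$: it comes from $\sum_{F}\min\{P(F),\sqrt{P(F)/m}\}$ over dyadic cells $F$, bounded via Cauchy--Schwarz by $\sqrt{2^{d\ell}/m}$ at level $\ell$, which holds for every $P$ supported in $\mathbb X$.
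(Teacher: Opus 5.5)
Your proposal is correct and follows essentially the same route as the paper. The corollary is obtained by substituting $m\asymp n/\log n$ into Theorem~\ref{consistencydir}, with $\varepsilon_{m,1}(\mathbb X)$ made explicit by applying the Fournier--Guillin bound conditionally on the posterior Dirichlet process $P$, whose moments are bounded uniformly because $\mathbb X$ is bounded (the paper's bound $\int|x|^q\,\tilde p(dx)\le 2R^q$); your dimension-by-dimension bookkeeping, including $\log(1+m)\asymp\log n$ for $d=2$, is right. The one detail left implicit is the remainder term $m^{-(q-1)/q}$ in Theorem~\ref{fournier}: fixing any $q>2$ avoids the excluded exponents and gives $m^{-(q-1)/q}\le m^{-1/2}$, which is harmless in every dimension.
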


In the regime $\sigma=0$, the PY posterior-predictive mechanism is informative with respect to the $1$-Wasserstein metric. More precisely, it is informative with rate of convergence \eqref{rate2} when regarded as an $(\varepsilon,\delta)$-DP mechanism, and with rate of convergence \eqref{rate1} when regarded as an asymptotically $\varepsilon$-DP mechanism. In the former case, the weaker DP requirement leads to a faster convergence rate, which is minimax optimal. Thus, under $(\varepsilon,\delta)$-DP, replacing the confidential data by synthetic data entails no loss in the rate of convergence of the empirical measure to the data-generating distribution. This behaviour illustrates the general phenomenon, emphasized by \cite{wasserman}, that DP constraints may slow down convergence rates. At the same time, our results address their question on whether minimax rates can be obtained under DP constraints, showing that they can be achieved by assuming the weaker $(\varepsilon,\delta)$-DP.

Theorems~\ref{consistenza multinomiale} and~\ref{consistencydir} also imply corresponding bounds for the expected distance between the synthetic empirical measure and the empirical measure of the confidential data. Indeed, by the
triangle inequality,
\[
\mathbb E\left[ \mathcal W_1\left(e_m^{(\mathbf Z)},e_n^{(\mathbf X)}\right) \right]
\leq \mathbb E\left[\mathcal W_1\left(e_m^{(\mathbf Z)},\mathfrak p_0\right) \right] +
\mathbb E\left[\mathcal W_1\left(e_n^{(\mathbf X)},\mathfrak p_0\right) \right].
\]
This empirical-to-empirical target is considered by \citet{boedihardjo2024privatemeasuresrandomwalks}. Their mechanism satisfies
pure \(\varepsilon\)-DP for every $n$ and obtains a Wasserstein error of order \(n^{-1/d}\), up to logarithmic factors.
By contrast, in Corollary~\ref{consistencydir_epsdp} we establish the same rates,  but under the weaker finite-sample guarantee \((\varepsilon,\delta_n)\)-DP with \(\delta_n\to0\).  The improvement in \citet{boedihardjo2024privatemeasuresrandomwalks} comes at the price of a more involved construction based on superregular random walks, whereas the PYP posterior-predictive mechanism retains the the structure of the Bayesian nonparametric model.

The proof of Theorem~\ref{consistencydir} relies on a concentration bound for the posterior distribution of a Dirichlet process, which may be of independent interest. The key quantity is the expected $1$-Wasserstein distance between a posterior Dirichlet process  and its corresponding posterior mean measure. This term can be interpreted as a Wasserstein analogue of a posterior variance of the Dirichlet process, measuring how tightly the posterior random measure concentrates around its mean.

\begin{lemma}\label{concDP}
    Let $\tilde{p}\sim\mathscr{D}\left( \theta ,H\right)$ and let $X_1,...,X_n$ be a random sample from $\tilde{p}$, for $n\geq1$, such that $\tilde{p}|X_1,...,X_n\sim \pi _{n}\left( \cdot |X_1,...,X_n\right) =\mathscr{D}%
\left( \theta +n,H_n\right)$, where $H_n=\frac{\theta}{\theta+n}H+\frac{n}{\theta+n}\sum_{i=1}^n\delta_{X_i}$. If $X_1,...,X_n \iid \mathfrak{p}_0$, then
\begin{equation}\label{dir_rate}
\mathbb{E}_{\mathfrak{p}%
_{0}^{n}} \left( \mathbb{E}_{\tilde{p}|X_1,..,X_n}\left( \mathcal{W}_{1}\left(
P,H_{n}\right) \right) \right) = \begin{cases} O(n^{-1/2}) \quad & \text{if } d < 2,  \\
    O(n^{-1/2}\log(1+n)) \quad & \text{if } d = 2, \\
    O(n^{-1/d})  \quad & \text{if } d > 2. 
    \end{cases} 
\end{equation}
\end{lemma}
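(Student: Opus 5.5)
We bound the expected $1$-Wasserstein distance between a posterior Dirichlet process $P\sim\mathscr D(\theta+n,H_n)$ and its mean $H_n$. The route is a dyadic partition bound of Fournier--Guillin type, applied conditionally on $X_1,\ldots,X_n$. Since $\mathbb X$ is bounded, we may assume $\mathbb X\subseteq(-1,1]^d$ after rescaling. For each $\ell\ge0$, let $\mathcal P_\ell$ be the natural partition of $(-1,1]^d$ into $2^{d\ell}$ translated cubes of side $2^{-\ell+1}$.

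\textbf{Step 1: reduction to cell masses.} The standard bound for any two probability measures $\mu,\nu$ on $(-1,1]^d$ (Lemma 5 and (4) in Fournier and Guillin) gives
\begin{displaymath}
\mathcal W_1(\mu,\nu)\le C_d\sum_{\ell\ge0}2^{-\ell}\sum_{F\in\mathcal P_\ell}|\mu(F)-\nu(F)|.
\end{displaymath}
Apply it with $\mu=P$ and $\nu=H_n$, and take the conditional expectation given $X_1,\ldots,X_n$.

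\textbf{Step 2: conditional variances.} For each cell $F$, the Dirichlet process property gives
\begin{displaymath}
P(F)\mid X_{1:n}\sim\mathrm{Beta}\bigl((\theta+n)H_n(F),(\theta+n)(1-H_n(F))\bigr).
\end{displaymath}
Hence $\mathbb E[P(F)\mid X_{1:n}]=H_n(F)$ and
\begin{displaymath}
\mathrm{Var}(P(F)\mid X_{1:n})=\frac{H_n(F)(1-H_n(F))}{\theta+n+1}.
\end{displaymath}
By Jensen, $\mathbb E|P(F)-H_n(F)|\le\min\{2H_n(F),\sqrt{H_n(F)/(\theta+n+1)}\}$. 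Summing over cells and using Cauchy--Schwarz with $|\mathcal P_\ell|=2^{d\ell}$,
\begin{displaymath}
\sum_{F\in\mathcal P_\ell}\mathbb E|P(F)-H_n(F)|\le\min\Bigl\{2,\;\sqrt{2^{d\ell}/(\theta+n+1)}\Bigr\}.
\end{displaymath}
This holds because $\sum_F H_n(F)=1$. Both bounds are deterministic, so the outer expectation over $\mathfrak p_0^n$ is trivial. This is simpler than the empirical-measure case, since the posterior mean plays the role of $\mathfrak p_0$ and the fluctuation is exactly multinomial-like with effective sample size $\theta+n+1$.

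\textbf{Step 3: summation over scales.} We obtain
\begin{displaymath}
\mathbb E\,\mathcal W_1(P,H_n)\le C_d\sum_{\ell\ge0}2^{-\ell}\min\{1,\,2^{d\ell/2}n^{-1/2}\}.
\end{displaymath}
\begin{itemize}
\item For $d<2$, the series $\sum_\ell 2^{\ell(d/2-1)}$ converges, giving $O(n^{-1/2})$.
\item For $d=2$, cut the sum at $\ell^*\approx\log_2 n/2$. Each term up to $\ell^*$ contributes $n^{-1/2}$, and the tail is a geometric sum of order $2^{-\ell^*}\lesssim n^{-1/2}$. This gives $O(n^{-1/2}\log(1+n))$.
\item For $d>2$, cut at $2^{d\ell^*}\approx n$. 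Both pieces are of order $2^{-\ell^*}=n^{-1/d}$.
\end{itemize}

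The main point needing care is Step 1. The partition inequality must be stated for arbitrary probability measures, not only for empirical ones, and this holds because the Fournier--Guillin bound is purely deterministic. One should also note that $H_n$ contains the diffuse component $\theta H/(\theta+n)$. This causes no difficulty, since only the cell masses $H_n(F)$ and the Beta marginals of the Dirichlet process on finite partitions enter the argument.
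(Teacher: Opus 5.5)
Your proof is correct and follows essentially the same route as the paper. Both arguments use a multiscale dyadic-partition bound for $\mathcal W_1$, the Beta/Dirichlet variance of the cell masses with effective sample size $\theta+n+1$, Cauchy--Schwarz over the $O(2^{d\ell})$ cells, and a summation over scales split into the three regimes of $d$. The only differences are cosmetic: the paper uses the truncated bound of Weed and Bach (residual $\delta^{k^*}$, then optimizing over $k^*$), while you use the untruncated Fournier--Guillin bound and control the fine scales with the trivial estimate $\sum_F\mathbb E|P(F)-H_n(F)|\le 2$.
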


Interestingly, Lemma \ref{concDP} implies, by means of triangle inequality, that the same rate is achieved by
\begin{equation}\label{pcr}
    \mathbb{E}_{\mathfrak{p}_{0}^{n}}\left( \mathcal{W}_{1}^{\mathcal{P}(\mathcal{P}\left( 
\mathbb{X})\right) }\left( \pi _{n}\left( \cdot |X_1,...,X_n\right)
,\delta _{\mathfrak{p}_{0}}\right) \right) 
\end{equation}
which is a posterior contraction rate at $\mathfrak{p}_0$ for the Dirichlet process prior. Rates of the form in (\ref{pcr}) have been studied by \cite{camerlenghi2026wasserstein}, though their techniques yield a slower rate than \eqref{dir_rate}.

%%%%%%%%%%%%%%%%%%%%%%%%%%%%%%%%
%%%%%%%%%%%%%%%%%%%%%%%%%%%%%%%%
%%%%%%%%%%%%%%%%%%%%%%%%%%%%%%%%
%%%%%%%%%%%%%%%%%%%%%%%%%%%%%%%%

\section{Numerical illustrations}\label{num}

We report three numerical experiments. The first one illustrates the main computational advantage of posterior-predictive release for downstream Bayesian inference. The second one validates the Wasserstein informativity rates of Theorem \ref{consistencydir} in a controlled simulation. The third one compares distributional utility on a large real dataset from the American Community Survey.

\subsection{Downstream Bayesian inference}\label{downstream}

Recall that under the Dirichlet-process posterior-predictive mechanism, the released data can be analyzed with the ordinary Dirichlet-process posterior. Indeed, if \(Z_1,\ldots,Z_m\) are the released observations, then the downstream posterior is available in closed form:
\[
\tilde p \mid Z_1,\ldots,Z_m=z_1,\ldots,z_m \sim \mathscr D\left(\theta+m,\frac{\theta H+\sum_{\ell=1}^{m}\delta_{z_\ell}}{\theta+m}\right).
\]
Thus posterior summaries can be computed by direct posterior simulation from a standard Dirichlet-process posterior.

This is in sharp contrast with generic synthetic-data mechanisms. Consider, for instance, the perturbed histogram mechanism reviewed in Appendix \ref{histograms}. Let \(B_1,\ldots,B_k\) be the histogram partition, let \(C_j=\sum_{i=1}^{n}\indicator[X_i\in B_j]\), and let \(\tilde f\) be the perturbed histogram density obtained from the noisy counts. If \(Z_1,\ldots,Z_m\) are then sampled from \(\tilde f\), the correct Bayesian posterior is not the ordinary posterior given \(Z_1,\ldots,Z_m\). Rather, it is the mechanism-aware posterior associated with the hierarchical model
\[
\tilde p \sim \mathscr D(\theta,H),\qquad X_1,\ldots,X_n \mid \tilde p \iid \tilde p,\qquad C_j=\sum_{i=1}^{n}\indicator[X_i\in B_j],\qquad Z_1,\ldots,Z_m \mid \tilde f \iid \tilde f.
\]
Consequently, posterior computation requires augmenting the Markov chain with the unobserved confidential sample \(X_{1:n}\), and then updating the Dirichlet process conditionally on the imputed confidential data:
\[
\tilde p \mid X_1,\ldots,X_n \sim \mathscr D\left(\theta+n,\frac{\theta H+\sum_{i=1}^{n}\delta_{X_i}}{\theta+n}\right).
\]
The latent state therefore scales with the confidential sample size \(n\), not with the released sample size \(m\). This makes the exact mechanism-aware posterior computation rapidly infeasible. In our reference implementation, for \(n=10^4\) and \(m=10^2\), obtaining \(1000\) posterior samples from the full \(X_{1:n}\)-augmented MCMC takes about six minutes, making it unfeasible in the intended large-scale regime, where \(n\) may be \(10^6\) or larger.

For this reason, in the repeated simulation below we do not use the full \(X_{1:n}\)-augmented posterior for the histogram sample-release mechanisms. Instead, we use a computational fallback that is deliberately favorable to the histogram mechanisms: we replace the synthetic sample from the histogram by a direct release of the histogram output. This reduces the latent state from the full confidential sample \(X_{1:n}\) to the confidential bin counts. For the perturbed histogram, a direct release of noisy counts is a valid differentially private release when the noise is calibrated to the sensitivity of the count vector. For the smoothed histogram, however, directly releasing the smoothed histogram is not differentially private, except in the degenerate case of complete smoothing, which has no utility. The direct smoothed histogram should therefore be understood only as a non-private computational diagnostic.

We run the fallback simulation on \(n=2000\) confidential observations generated from a discrete distribution on a regular grid in \([0,1]\). The grid probabilities are generated from a mixture of beta-shaped weights, as described in Appendix \ref{details_num}. We compare the downstream posterior summaries obtained from three workflows: the Dirichlet-process posterior-predictive mechanism, the direct perturbed histogram, and the direct smoothed histogram. Posterior samples are used to estimate the mean, the quartiles, and the right-tail probability \(\tilde p((0.75,1])\). We report empirical bias, root mean square error, average credible-interval length, frequentist coverage, wall-clock time, and effective sample size per second. Complete results are given in Appendix \ref{details_num}. The statistical accuracy of the three methods is comparable in this simplified experiment, but the computational workflows are not: the proposed posterior-predictive mechanism requires only the standard Dirichlet-process posterior update, whereas the histogram-based workflows require mechanism-aware inference even in the reduced direct-histogram formulation.

\subsection{Convergence analysis for a simulated dataset}\label{conv}

We now validate empirically the Wasserstein informativity rates proved in Theorem \ref{consistencydir}. The confidential data are generated from an explicitly discrete distribution on \([0,1]\), constructed as follows. Let \(G\) have a geometric distribution on \(\{1,2,\ldots\}\) with success probability \(p=0.05\). For \(k\geq 1\), set \(r(k)=\lfloor \log_2 k \rfloor+1\), and define
\[
    T(k)=\frac{2(k-2^{r(k)-1})+1}{2^{r(k)}}.
\]
Thus \(T(1)=1/2\), \(T(2)=1/4\), \(T(3)=3/4\), \(T(4)=1/8\), and so on. We generate \(N=1000000\) iid observations \(G_1,\ldots,G_N\), and set \(X_i=T(G_i)\). The resulting population sample takes values in the dyadic points of \([0,1]\), contains ties by construction, and has a large but sparse support.

The dataset of size \(N\) is treated as the confidential population. For each value of \(n\) in a grid between \(10000\) and \(N\), we draw a random subsample \(X_1,\ldots,X_n\) and generate a synthetic dataset \(Z_1,\ldots,Z_m\) using the Dirichlet-process posterior-predictive mechanism with base measure \(H=\operatorname{Unif}[0,1]\) and strength parameter \(\theta\). Throughout this section we consider the Pitman--Yor posterior-predictive mechanism in the special case \(\sigma=0\), and refer to it as the \textit{Dirichlet-process posterior-predictive mechanism}.

The synthetic sample size \(m\) is chosen according to the privacy regime under consideration. For \((\varepsilon,\delta)\)-differential privacy, we set \(\varepsilon=2\) and consider \(\delta\in\{10^{-2},10^{-3},10^{-4}\}\); for each \(n\), \(m\) is chosen as the largest value satisfying the finite-sample bound of Theorem \ref{cor_dp}. We also consider the asymptotic \(\varepsilon\)-differential privacy regime of Corollary \ref{rate1}, using a sequence \(m=m(n)\) with \(m/n\to 0\). For each configuration, we compute the \(1\)-Wasserstein distance between the synthetic empirical measure and the empirical measure of the confidential subsample,
\[
\mathcal W_1\left(e_m^{(\mathbf Z)},e_n^{(\mathbf X)}\right),
\]
and average the result over \(100\) independent runs of the synthetic-data generation process.

Figure \ref{w1_fig} displays the results for \(\theta\in\{1,10,100\}\). In the three \((\varepsilon,\delta)\)-differentially private scenarios, the decay is close to the \(n^{-1/2}\) benchmark on the log-log scale, in agreement with Corollary \ref{consistencydir_epsdeltadp}. In the asymptotic \(\varepsilon\)-differential privacy regime, the convergence is slower, consistently with Corollary \ref{rate1}. Although the asymptotic \(\varepsilon\)-DP curve can lie below some of the finite-\(\delta\) curves for the sample sizes considered here, this does not contradict the theory: pure \(\varepsilon\)-DP is only obtained asymptotically in this regime, and the finite-sample value of the corresponding \(\delta\) is larger than \(10^{-2}\).

\begin{figure}[htbp]
    \centering
    \includegraphics[width=0.33\textwidth]{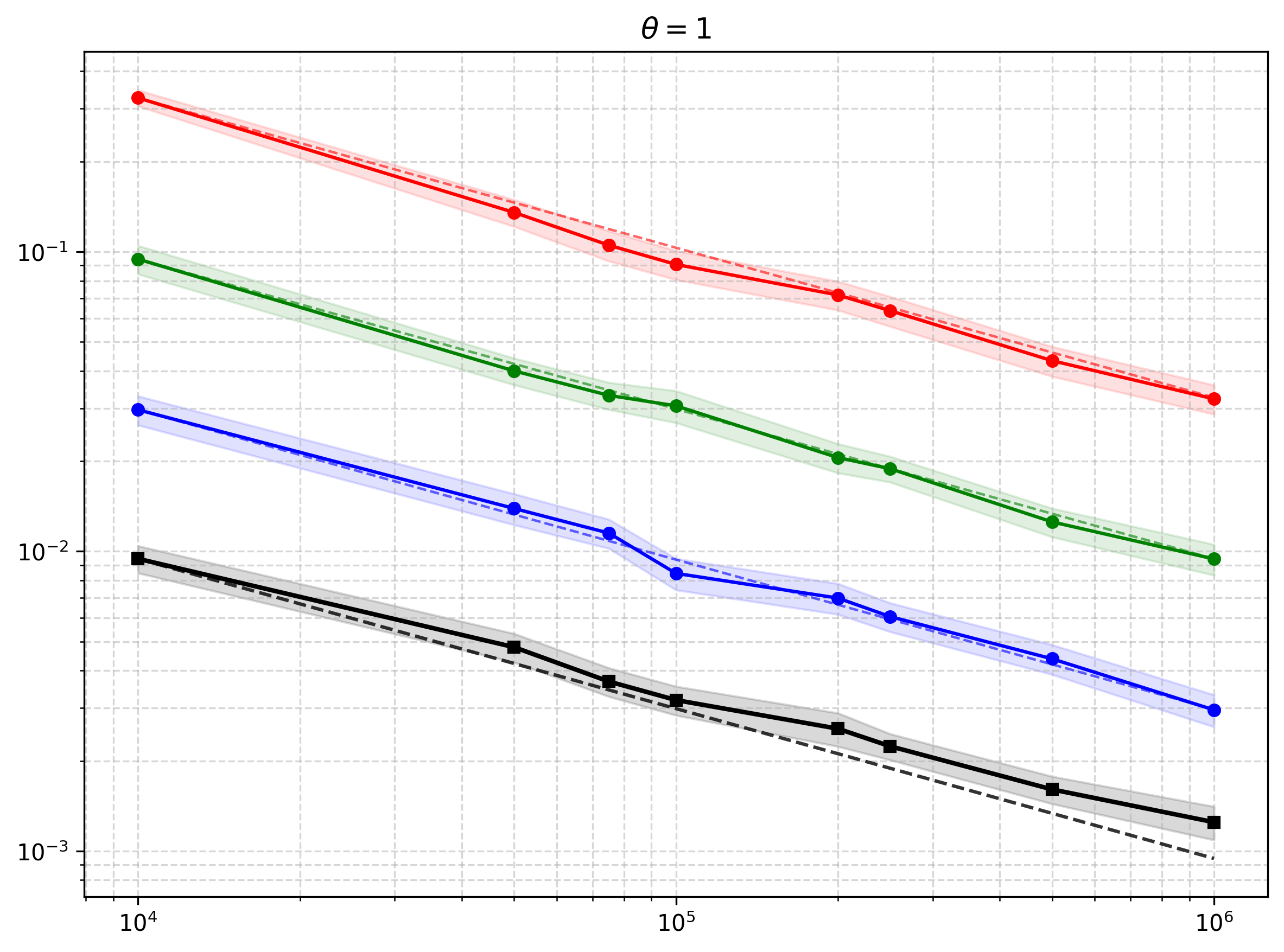}%
    \includegraphics[width=0.33\textwidth]{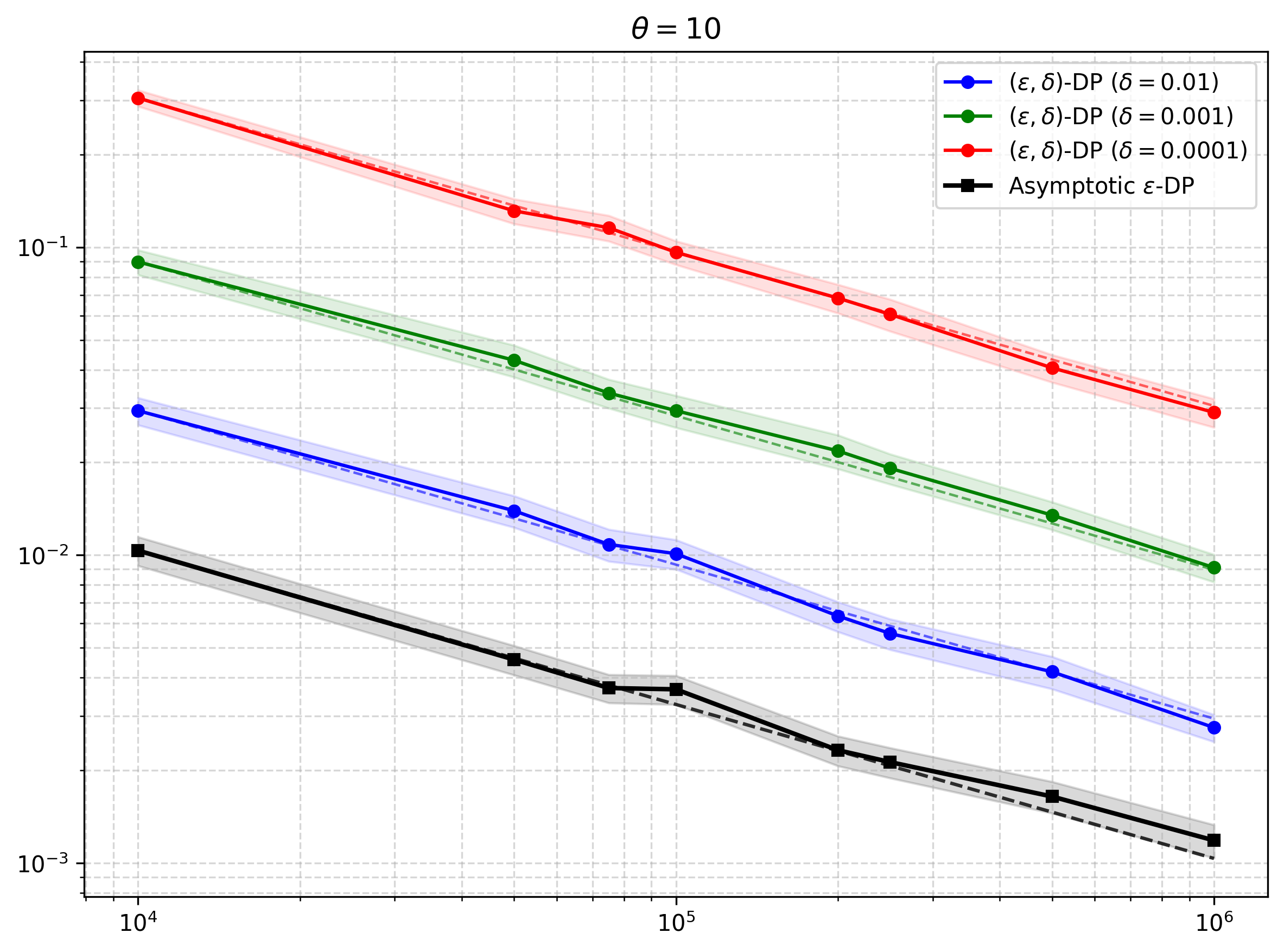}%
    \includegraphics[width=0.33\textwidth]{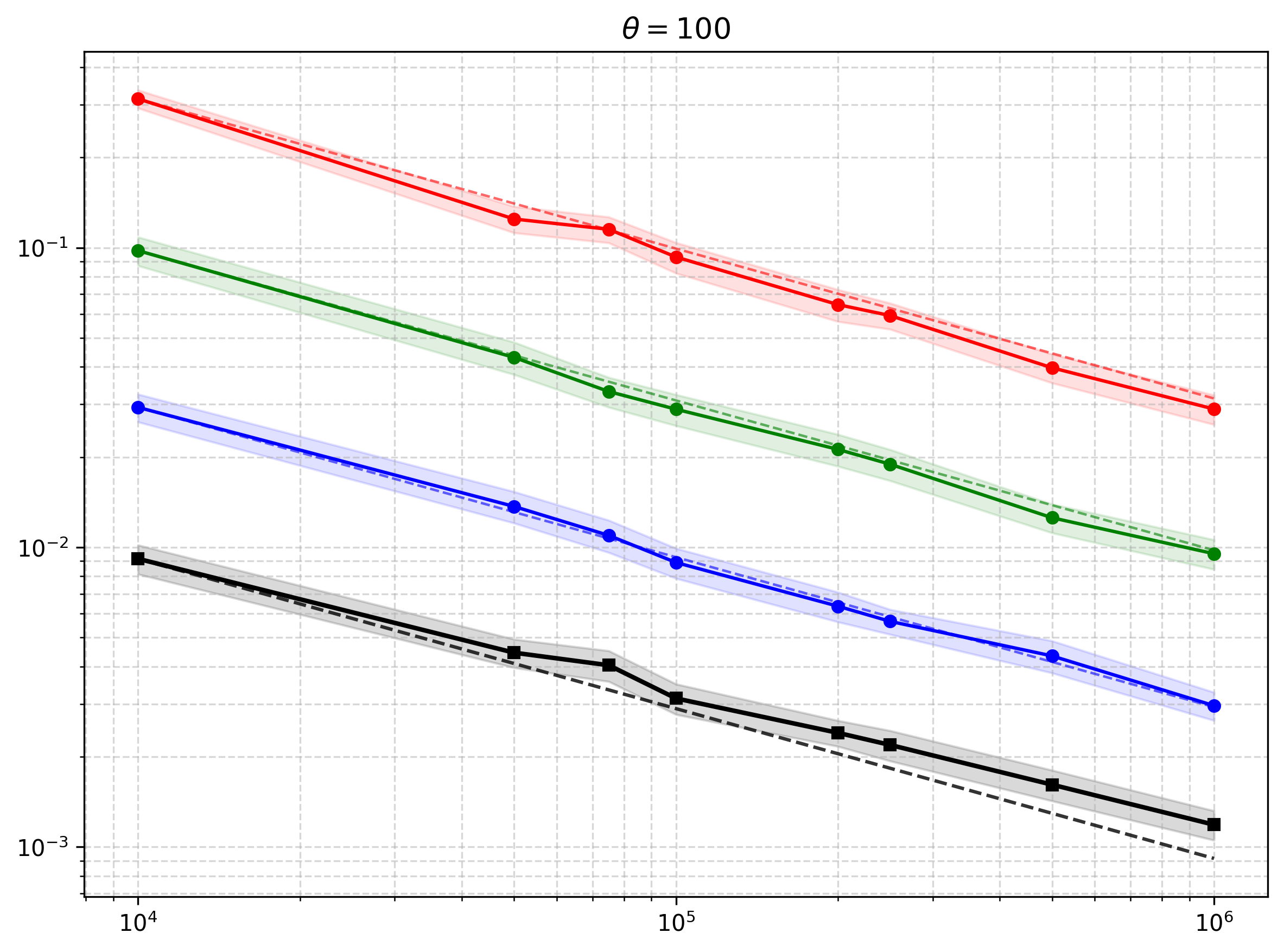}
    \caption{Average \(\mathcal W_1\left(e_m^{(\mathbf Z)},e_n^{(\mathbf X)}\right)\) as a function of \(n\), with a \(99\%\) confidence band, on a log-log scale, for \(\theta=1,10,100\). Solid curves correspond to computed distances; dashed lines have slope \(-0.5\).}
    \label{w1_fig}
\end{figure}

\subsection{Real data: California census dataset}\label{cal_data}

We finally assess distributional utility on a large real dataset. We consider the \(2024\) \(5\)-Year Public Use Microdata Sample from the American Community Survey, conducted by the U.S. Census Bureau \citep{acs2021california}, and focus on the PINCP variable, which records personal annual income. Observations with missing or non-positive income are removed. The remaining values are transformed logarithmically and then rescaled to \([0,1]\). The resulting dataset contains \(11918162\) observations and \(32393\) distinct recorded values, of which \(7689\) are singletons.

We compare the Dirichlet-process posterior-predictive mechanism with the perturbed histogram and smoothed histogram mechanisms reviewed in Appendix \ref{histograms}. The confidential data are modeled with a Dirichlet process prior with base measure \(H=\operatorname{Unif}[0,1]\) and \(\theta=1\). We set \(\varepsilon=2\) and \(\delta=10^{-5}\). Under Theorem \ref{cor_dp}, the largest admissible released sample size is \(m=119\). This is a stringent privacy regime: the number of released observations is tiny compared with the confidential sample size.

For the histogram mechanisms, we use the same number \(m=119\) of released synthetic observations and \(300\) histogram bins. To match the \((\varepsilon,\delta)\)-privacy target, the perturbed histogram uses Gaussian perturbations of the histogram counts calibrated to the sensitivity of the count vector. For the smoothed histogram, we compose the smoothing step with an \((\varepsilon,\delta)\)-differentially private Gaussian perturbation of the counts. We then generate synthetic datasets from the resulting private histograms.

We evaluate distributional utility in three ways. First, we compare standard summary statistics of the confidential and synthetic datasets: mean, standard deviation, and quartiles. Second, we compare kernel density estimates computed from the confidential and synthetic samples. Third, over \(100\) independent runs of each mechanism, we compute the \(1\)-Wasserstein distance between the synthetic empirical measure and the empirical measure of the confidential data.

\renewcommand{\arraystretch}{1.3}
\begin{table}[htbp]
\centering
\footnotesize
\begin{tabular}{|p{10em} c c c c c|}
\hline
 & \textbf{Mean} & \textbf{SD} & \textbf{Q1 (25\%)} & \textbf{Median} & \textbf{Q3 (75\%)} \\
\hline \hline
\textbf{Confidential} & 0.698 & 0.093 & 0.651 & 0.711 & 0.759 \\
\hline
\textbf{Perturbed} & 0.697 & 0.096 & 0.652 & 0.710 & 0.757 \\
\textbf{Smoothed} & 0.693 & 0.087 & 0.639 & 0.704 & 0.748 \\
\textbf{Dirichlet-process posterior-predictive} & 0.691 & 0.091 & 0.632 & 0.707 & 0.759 \\
\hline
\end{tabular}
\caption{Summary statistics of confidential data and synthetic data generated with three different mechanisms.}
\label{tab:summary_statistics}
\end{table}

Table \ref{tab:summary_statistics} shows that all three mechanisms preserve the main one-dimensional summaries reasonably well, despite the small value of \(m\). The large confidential sample size makes the histogram mechanisms particularly favorable in this example: most bin counts are large, so the perturbation added to the counts has a limited effect, and the smoothed histogram does not need to over-smooth the distribution. The Dirichlet-process posterior-predictive mechanism nevertheless remains competitive, without requiring a fixed histogram partition.

Figures \ref{fig:KDE} and \ref{fig:wass_boxplots} give a similar picture. The kernel density estimates obtained from the three synthetic samples are close to the estimate based on the confidential data, and the average \(1\)-Wasserstein distance is below \(0.0125\) for all mechanisms. Thus, on this large real dataset, the proposed mechanism achieves distributional utility comparable to the histogram-based methods under the same released sample size.

\begin{figure}[htbp]
    \centering
    \begin{minipage}[c]{0.48\textwidth}
        \centering
        \includegraphics[width=\textwidth]{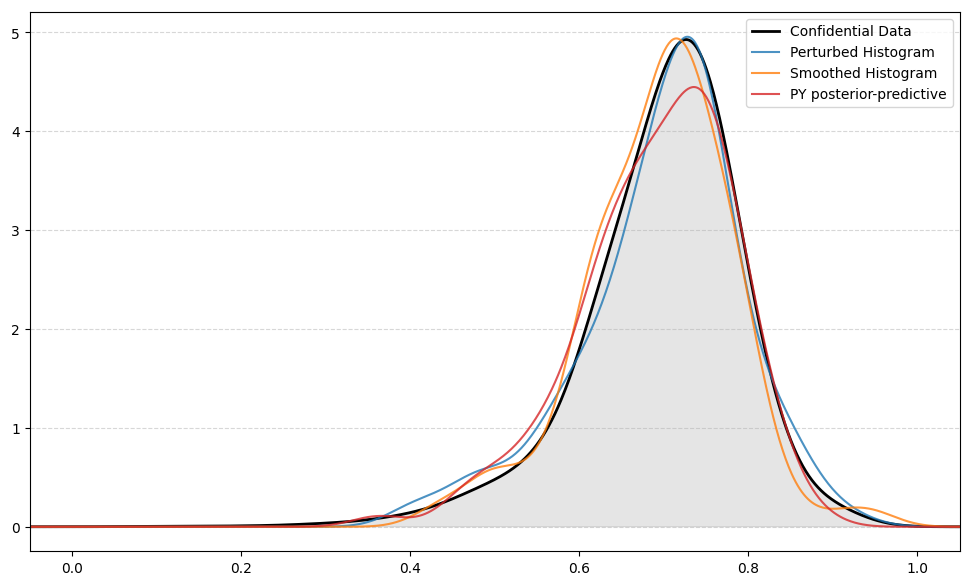}
        \caption{Kernel density estimates based on confidential and synthetic datasets.}
        \label{fig:KDE}
    \end{minipage}\hspace{0.02\textwidth}%
    \begin{minipage}[c]{0.48\textwidth}
        \centering
         \includegraphics[width=\textwidth]{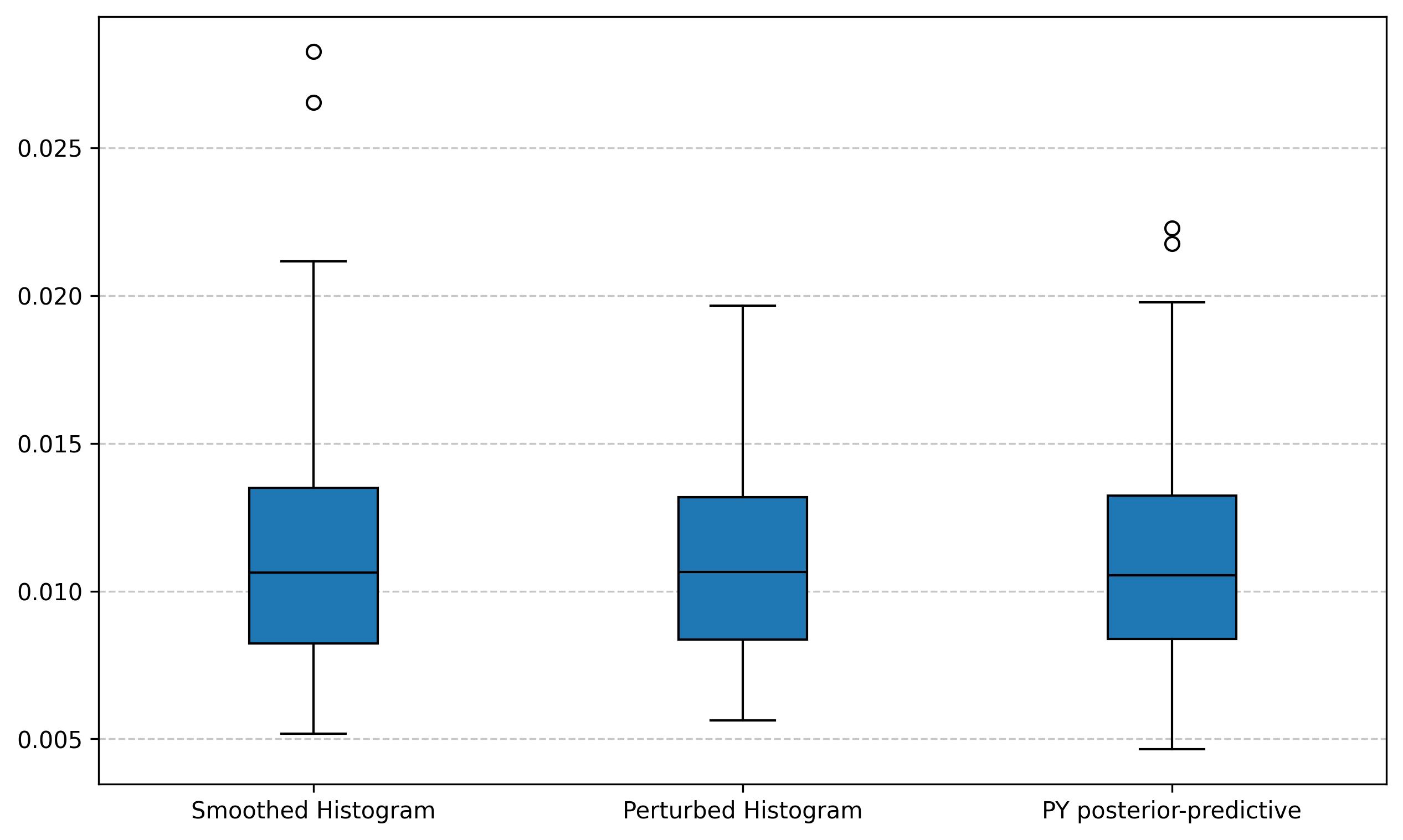}
        \caption{Boxplots of the \(1\)-Wasserstein distance between synthetic and confidential empirical measures, over \(100\) independent runs.}
        \label{fig:wass_boxplots}
    \end{minipage}
\end{figure}

\section{Discussion}\label{disc}

We have proposed a Bayesian nonparametric framework for private synthetic data generation tailored to discrete data. The mechanism is model-aligned: synthetic observations are drawn directly from the posterior-predictive distribution of the Bayesian nonparametric model. Hence, under the model, confidential and synthetic data are jointly exchangeable, allowing standard Bayesian inference to be carried out from the released data. This contrasts with generic externally specified privatization mechanisms, for which likelihood-based or Bayesian inference may require integrating over the unobserved confidential sample and can become computationally intractable. The proposed mechanism is also simple to implement, since sampling from the PY posterior-predictive distribution is straightforward. At the same time, because the PY process is naturally suited to discrete data with ties and possibly many categories, extensions to data without ties or to mixed discrete-continuous settings remain important directions for future work.

The DP guarantees of the PY posterior-predictive mechanism depend on the discount parameter $\sigma\in(-\infty,1)$, which controls the prevalence of low-frequency categories and hence the main source of privacy risk. In the regime $\sigma\in(0,1)$, the mechanism satisfies instance-level $(\varepsilon,\delta)$-DP. Stronger guarantees are obtained in the two special regimes: for $\sigma<0$, corresponding to a parametric Dirichlet--Multinomial model, the mechanism satisfies instance-level $\varepsilon$-DP and, under suitable conditions on the released sample size, global $\varepsilon$-DP \citep{machana}; for $\sigma=0$, corresponding to the Dirichlet process prior, it satisfies global $(\varepsilon,\delta)$-DP, with $\delta$ controlled by the ratio between the synthetic and confidential sample sizes. These results show that DP guarantees are strongly tied to the predictive structure induced by the prior, suggesting the study of alternative priors and privacy notions better adapted to posterior-predictive mechanisms.

The utility results provide a frequentist validation of the PY posterior-predictive mechanism. Following the notion of informativity in \cite{wasserman}, we measure utility by the expected $1$-Wasserstein distance between the synthetic empirical distribution and the ``true'' data-generating distribution. In the regimes $\sigma=0$ and $\sigma<0$, we prove consistency in this metric and derive explicit convergence rates. These rates quantify the privacy--utility tradeoff: stronger privacy requirements may restrict the released sample size and slow convergence to the ``true'' distribution. When $(\varepsilon,\delta)$-DP is imposed only asymptotically, however, synthetic data achieve the same consistency rate as the confidential data. As a byproduct, we obtain a concentration bound for the posterior Dirichlet process around its posterior mean in expected $1$-Wasserstein distance, leaving analogous results for other Bayesian nonparametric priors as an open question.

%%%%%%%%%%%%%%%%%%%%%%%%%%%%%%%%
%%%%%%%%%%%%%%%%%%%%%%%%%%%%%%%%
%%%%%%%%%%%%%%%%%%%%%%%%%%%%%%%%
%%%%%%%%%%%%%%%%%%%%%%%%%%%%%%%%

\bibliographystyle{apalike}
\bibliography{references}
%%%%%%%%%%%%%%%%%%%%%%%%%%%%%%%%
%%%%%%%%%%%%%%%%%%%%%%%%%%%%%%%%
%%%%%%%%%%%%%%%%%%%%%%%%%%%%%%%%
%%%%%%%%%%%%%%%%%%%%%%%%%%%%%%%%
\clearpage

\section*{Supplementary material to: Bayesian Nonparametric Privacy-Preserving Synthetic Data Generation}\label{SM}

\appendix
 \setcounter{page}{1}

\numberwithin{equation}{section}
\numberwithin{theorem}{section}
\numberwithin{lemma}{section}

%%%%%%%%%%%%%%%%%%%%%%%%%%%%%%%%
%%%%%%%%%%%%%%%%%%%%%%%%%%%%%%%%
%%%%%%%%%%%%%%%%%%%%%%%%%%%%%%%%
%%%%%%%%%%%%%%%%%%%%%%%%%%%%%%%%
\section{Additional theory}

\subsection{Additional background material on Pitman--Yor process}
\begin{comment}
The Pitman--Yor process can also be defined a species sampling model characterized by the non-atomic distribution $H$ and the following sequence of exchangeable partition probability functions (EPPF):
\begin{equation*}
\label{eppfPY}
    p_k^{(n)}(n_1,...,n_k) = \dfrac{\prod_{i=1}^{k-1}(\theta + i \sigma)}{(\theta + 1)_{n-1}} \prod_{i=1}^{k} ( 1 - \sigma)_{n_i-1},
\end{equation*}
where $(a)_b := \Gamma(a+b)/\Gamma(a)$ for $a,b \geq 0$ denotes the Pochhammer symbol and the parameters $\theta,\sigma$ satisfy either $\sigma \in [0,1)$ and $\theta > - \sigma $, or $\sigma < 0$ and $\theta = z |\sigma|$ for some integer $z \geq 1$. We need this characterization in the proof of Theorem \ref{theorem_privacy_main}. 
\end{comment}
The update rule of the Pitman--Yor process is given by the following proposition (\cite{pitmanSampling}):
\begin{proposition}
\label{pitman posterior}
    Let $(X_1,...,X_n)$ be a sample from a random probability measure $\tilde{p} \sim \mathscr{PY}(\sigma,\theta,H)$. Then, conditionally on $\mathbf{X}$ featuring $k$ distinct values $X_i^*$ with frequencies $n_i$ for $i=1,...,k$, 
    \begin{equation*}
        \tilde{p} \mid \mathbf{X} \overset{\text{d}}{=} \sum_{j=1}^k \tilde{P}_j \delta_{X_j^*} + \tilde{R}_k F_k,
    \end{equation*}
    where $(\tilde{P}_1,...,\tilde{P}_k, \tilde{R}_k)$ has $\text{Dirichlet}(n_1 - \sigma,...,n_k - \sigma, \theta + k\sigma)$ distribution, independently of the random probability measure $F_k$, which has $\mathscr{PY}(\sigma, \theta + k \sigma, H)$-distribution. 
\end{proposition}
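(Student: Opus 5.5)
The plan is to identify the conditional law of $\tilde p$ given $\mathbf X$ through its predictive structure, and then transfer this identification to the random measures themselves via de Finetti's theorem. Write
\[
q:=\sum_{j=1}^k \tilde P_j\delta_{X_j^*}+\tilde R_k F_k
\]
for the random measure on the right-hand side. Conditionally on $\mathbf X$, draw $Y_1,Y_2,\ldots\mid q\overset{\mathrm{iid}}{\sim} q$. The first goal is to show that the sequence $(X_1,\ldots,X_n,Y_1,Y_2,\ldots)$ obeys exactly the Chinese restaurant rule \eqref{predPy}, i.e.\ that $(Y_r)_{r\ge1}$ has the law of the continuation of the PY sample given $\mathbf X$.

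\textbf{Step 1 (decomposition of a draw from $q$).} Each draw from $q$ can be split into two independent pieces. First, an indicator in $\{1,\ldots,k,k+1\}$, distributed as a categorical with weights $(\tilde P_1,\ldots,\tilde P_k,\tilde R_k)$. Second, when the indicator equals $k+1$, an independent draw from $F_k$. Because $H$ is nonatomic, the $F_k$-draws almost surely avoid $\{X_1^*,\ldots,X_k^*\}$, so the indicators can be read off from the $Y$'s.

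Now fix $r\ge0$. Let $s_j$ be the number of $Y_1,\ldots,Y_r$ equal to $X_j^*$, let $r_0$ be the number landing in $F_k$, let $k'$ be the number of distinct values among those $r_0$ draws, and let $m_1,\ldots,m_{k'}$ be their frequencies. The conditional law given $\mathbf X,Y_{1:r}$ factorizes, because the weights and $F_k$ are independent and the $F_k$-draws depend on the weights only through the indicators.
\begin{itemize}
\item By Dirichlet--multinomial conjugacy, the weights are updated to $\mathrm{Dirichlet}(n_1+s_1-\sigma,\ldots,n_k+s_k-\sigma,\theta+k\sigma+r_0)$.
\item Independently, the $r_0$ draws from $F_k\sim\mathscr{PY}(\sigma,\theta+k\sigma,H)$ follow the PY predictive rule \eqref{predPy} with strength $\theta+k\sigma$.
\end{itemize}

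\textbf{Step 2 (predictive probabilities of $Y_{r+1}$).} Multiplying the two factors gives:
\begin{itemize}
\item $Y_{r+1}=X_j^*$ with probability $(n_j+s_j-\sigma)/(\theta+n+r)$;
\item $Y_{r+1}$ equals the $l$-th value seen in $F_k$ with probability
\[
\frac{\theta+k\sigma+r_0}{\theta+n+r}\cdot\frac{m_l-\sigma}{\theta+k\sigma+r_0}=\frac{m_l-\sigma}{\theta+n+r};
\]
\item $Y_{r+1}$ is a new value drawn from $H$ with probability $(\theta+(k+k')\sigma)/(\theta+n+r)$.
\end{itemize}
This is exactly \eqref{predPy} applied to the augmented sample $(\mathbf X,Y_{1:r})$, which has $k+k'$ distinct values. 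By induction over $r$, the joint law of $(\mathbf X,Y_1,Y_2,\ldots)$ coincides with that of the first terms of an exchangeable PY sequence.

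\textbf{Step 3 (identification of the directing measure).} In the PY model, given $\mathbf X$, the continuation sequence is conditionally iid from $\tilde p$. Hence the law of $\tilde p\mid\mathbf X$ equals the law of the almost sure weak limit of the empirical measures of the continuation sequence. Step 2 shows that this continuation has the same conditional law as $(Y_r)$. Moreover, the empirical measures of $(Y_r)$ converge almost surely to $q$, by the Glivenko--Cantelli / de Finetti argument conditional on $q$ on the Polish space $\mathbb X$. Therefore the laws agree, and $\tilde p\mid\mathbf X\overset{d}{=}q$.

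\textbf{Main obstacle.} I expect the main difficulty to be making the conditioning in Step 1 rigorous. This requires showing that, given the indicators, the Dirichlet weights and the $F_k$-subsample remain conditionally independent, and that they update separately. I would handle this by writing the joint density of indicators and $F_k$-draws explicitly and integrating out the weights. A second delicate point is the degenerate case $\sigma<0$ with $k=z$: there $\theta+k\sigma=0$, so $\tilde R_k=0$ almost surely, and the Dirichlet must be read with a zero parameter as a point mass. The proof would treat this case separately by plain finite Dirichlet conjugacy, using the representation \eqref{pyp} together with the aggregation property of the Dirichlet distribution.
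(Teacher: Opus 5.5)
Your argument is correct, but it takes a different route from the paper, which gives no proof and simply imports the statement from \cite{pitmanSampling}.

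The classical argument behind that result works at the level of the random weights. The PY weights in order of appearance are $\mathrm{GEM}(\sigma,\theta)$, and the conditional probability of the observed partition given them is $\prod_{j=1}^k\tilde P_j^{n_j-1}\prod_{j=1}^{k-1}(1-\tilde P_1-\cdots-\tilde P_j)$. By Beta conjugacy, the first $k$ sticks become independent $\mathrm{Beta}(n_j-\sigma,\theta+j\sigma+\sum_{i>j}n_i)$, which is exactly the stick-breaking form of $\mathrm{Dirichlet}(n_1-\sigma,\ldots,n_k-\sigma,\theta+k\sigma)$. The later sticks are untouched and form a $\mathrm{GEM}(\sigma,\theta+k\sigma)$, with the unseen atoms still iid $H$.

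You work with observables instead. Your only PY-specific input is the prediction rule \eqref{predPy}, which the paper already takes for granted, together with Dirichlet--multinomial conjugacy. You then lift equality of predictive laws to equality of the conditional law of the directing measure, via Kolmogorov extension and almost sure weak convergence of empirical measures on the Polish space $\mathbb X$. What this buys: the proposition follows from facts already stated in the paper, and you never need the order-of-appearance structure of the weights. The cost is the measure-theoretic identification step. The weight-level route is shorter once the GEM machinery is available, and it shows directly where each Dirichlet parameter comes from.

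Two simplifications are available. First, the conditional-independence issue you flag as the main obstacle disappears if you build the $Y$'s explicitly. Take indicators $C_i$ iid categorical given the weights, and an independent sequence $U_1,U_2,\ldots$ iid from $F_k$. Set $Y_i=X^*_{C_i}$ if $C_i\le k$, and $Y_i=U_\ell$ if $i$ is the $\ell$-th index with $C_i=k+1$. Then (weights, $C$) is independent of $(F_k,U)$ by construction. Since $H$ is nonatomic, $Y_{1:r}$ carries almost surely the same information as $(C_{1:r},U_{1:r_0})$. Second, the case $\sigma<0$, $k=z$ needs no separate treatment. There $\tilde R_k=0$ almost surely, so $r_0=k'=0$, and your Step 2 gives new-value probability $(\theta+k\sigma)/(\theta+n+r)=0$, in agreement with \eqref{predPy}.
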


\begin{comment}
\subsection{A factorial-moment version of Markov's inequality}

For \(q \in \mathbb N\), let
\[
x^{\underline q}
=
x(x-1)\cdots(x-q+1)
\]
denote the falling factorial, with the convention \(x^{\underline 0}=1\).

\begin{lemma}[\cite{naveau1997comparison}]\label{l1}
Let \(X\) be a nonnegative integer-valued random variable. For any integers \(k \ge q \ge 1\),
\[
\mathbb P(X \ge k)
\le
\frac{\mathbb E\left[X^{\underline q}\right]}{k^{\underline q}}.
\]
\end{lemma}

To apply this inequality to Beta--binomials we use the next identity, which is standard and follows immediately from the Beta mixture representation of the Beta--Binomial distribution.

\begin{lemma}[Factorial moments of the Beta--Binomial law]\label{l2}
If \(X \sim \mathrm{BetaBin}(m,a,b)\), then for every integer \(q \in \{1,\dots,m\}\),
\[
\mathbb E\left[X^{\underline q}\right]
=
m^{\underline q}\frac{(a)_q}{(a+b)_q}
\]
\end{lemma}
\end{comment}

\subsection{Other privacy mechanisms}
\label{histograms}
We describe three privacy mechanisms known in the literature. They all rely on generating synthetic data by sampling from a histogram.

We consider $\mathbb{X} = [0,1]^r$ for some integer $r \geq 1$ and partition $\mathbb{X}$ into $k$ bins ${B_1,...,B_k}$, where each bin $B_j$ is a cube with sides of length $h \in (0,1)$ such that $k = 1/h^r$ is an integer. We denote by $\hat{f}_k(x)$ the histogram estimator for the confidential data $\mathbf{X}$, defined as
\begin{equation*}
    \hat{f}_k(x) := \sum_{j=1}^k \dfrac{\hat{p}_j}{h^r} \mathbbm{1}_{B_j}(x),
\end{equation*}
where $\hat{p}_j = C_j/n$ and $C_j = \sum_{i=1}^n \mathbbm{1}_{B_j}(X_i)$. 

\textbf{(1) Smoothed estimator mechanism, \cite{wasserman}} The histogram estimator can be smoothed in the following way:
\begin{equation*}
    \hat{f}_{k,\sigma}(x) = (1 - \sigma) \hat{f}_{k}(x) + \sigma,
\end{equation*}
where $\sigma \in (0,1)$. Sampling from the smoothed histogram satisfies $\varepsilon$-differential privacy if 
\begin{equation}\label{smoothedDP}
    m \log \left (  \dfrac{(1 - \sigma)k}{n\sigma}  + 1 \right ) \leq \varepsilon.
\end{equation}

\textbf{(2) Perturbed histogram estimator, \cite{wasserman}} One can instead perturb the set of counts by adding noise, build a new histogram estimator based on the perturbed counts and then release data sampling from it. Formally, let $D_j = C_j + \nu_j$, where $\nu_1,...,\nu_m \iid \text{Laplace}(0,2/\varepsilon)$. Let $\tilde{D}_j = \max\{D_j,0\}$, $\hat{q}_j = \tilde{D}_j/\sum_{s=1}^k \tilde{D}_s$  for $j=1,...,k$ and define
\begin{equation*}
    \tilde{f}(x) := \sum_{j=1}^k \dfrac{\hat{q}_j}{h^r} \mathbbm{1}_{B_j}(x).
\end{equation*}
Any sample from $\tilde{f}(x)$ preserves $\varepsilon$-differential privacy for any $k$.

\textbf{(3) Dirichlet-Multinomial mechanism, \cite{machana}} Let $\mathbf{p} := (p_1,...,p_k)$ be the vector  of probabilities $p_j := \mathbb{P}[X_i \in B_j]$ for $j =1,...,k$, and $(C_1,...,C_k)$ the bin counts. The mechanism relies on the following Bayesian parametric model:
\begin{align*}
    \mathbf{p}\sim\text{Dirichlet}(\alpha_1,...,\alpha_k)\\
    C_1,...,C_k|\mathbf{p}\sim\text{Multinomial$(n,\mathbf{p})$}.
\end{align*}
The mechanism involves drawing a vector of probabilities $\mathbf{q}:= (q_1,...,q_k)$ from the posterior of the model, which is still a Dirichlet with updated parameters $(\alpha_1 + C_1,..., \alpha_k + C_k)$, and generating a synthetic vector of bin counts $\mathbf{D} := (D_1,...,D_k)$ sampling from a Multinomial$(m,\mathbf{q})$.
Such mechanism satisfies $\varepsilon$-differential privacy if 
\begin{equation}\label{dm_cond}
    \alpha_j \geq m / (e^\varepsilon - 1)\text{ for all } j=1,...,k.  
\end{equation}

\subsection{Rate of convergence in Wasserstein distance of the
empirical measure}
\label{empirical}
We assume $X_1,...,X_n \iid \mathfrak{p}_0$. $p$-Wasserstein convergence of the empirical measure $e_n^{(\mathbf{X})}$ to $\mathfrak{p_0}$ can be quantified by providing a non-asymptotic estimate of the type
\begin{equation}
\label{rate_empirical}
    \varepsilon_{n,p}(\mathbb{X},\mathfrak{p}_0) := \mathbb{E}_{\otimes^n  \mathfrak{p}_0}[ \mathcal{W}_p^{\mathcal{P}(\mathbb{X})}(e_n^{(\mathbf{X})}, \mathfrak{p}_0)] \leq g(n)
\end{equation}
for all $n \geq 1$, where $g$ is a suitable function. The following theorem provides such an estimate in the case $\mathbb{X} \subseteq \mathbb{R}^d $. 
\begin{theorem}[\cite{fournier2015rate}]
\label{fournier}
    Let $\mathfrak{p}_0$ a probability measure on $\mathbb{R}^d$ and let $p \geq 1$. Assume that
    \begin{equation*}
        M_q(\mathfrak{p}_0) := \int_{\mathbb{X}} | x |^q  \ \mathfrak{p}_0(dx) < \infty
    \end{equation*}
    for some $q > p$. Then there exists a constant $C$ depending only on $p,d,q$ such that for all $n \geq 1$,
\begin{equation*}
\varepsilon_{n,p}(\mathbb{X},\mathfrak{p}_0) 
\leq C \, M_q^{p/q}(\mathfrak{p}_0) \times\ 
\begin{cases}     
n^{-1/2} + n^{-(q-p)/q}, & \text{if } p > d/2 \text{ and } q \neq 2p \\[1ex]
n^{-1/2} \log(1+n) + n^{-(q-p)/q}, & \text{if } p = d/2 \text{ and } q \neq 2p \\[1ex]
n^{-p/d} + n^{-(q-p)/q}, & \text{if } p \in [1, d/2) \text{ and } q \neq d/(d-p)
\end{cases}
\end{equation*}

\end{theorem}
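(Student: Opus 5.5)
The plan is to follow the multiscale dyadic-partition argument of \cite{fournier2015rate}. I will bound $\mathbb E[\mathcal W_p^p(e_n^{(\mathbf X)},\mathfrak p_0)]$. The bound on $\varepsilon_{n,p}$ then follows by Jensen's inequality, $\mathbb E\mathcal W_p\le(\mathbb E\mathcal W_p^p)^{1/p}$, and for $p=1$, the case used in this paper, the two quantities coincide. I read the right-hand side of Theorem~\ref{fournier}, with its prefactor $M_q^{p/q}$, as a bound on $\mathbb E\mathcal W_p^p$. First, by scaling $x\mapsto \lambda x$ with $\lambda=M_q^{-1/q}$, I reduce to $M_q(\mathfrak p_0)=1$. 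This works because $\mathcal W_p^p$ scales by $\lambda^p$, which produces exactly the factor $M_q^{p/q}$.

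\textbf{Deterministic step.} Set $B_0=\{|x|\le 1\}$ and $B_k=\{2^{k-1}<|x|\le 2^k\}$ for $k\ge1$. Let $\mathcal P_l$ be the partition of $(-1,1]^d$ into $2^{dl}$ dyadic cubes of side $2^{1-l}$. For probability measures $\mu,\nu$ I will prove the coupling bound
\[
\mathcal W_p^p(\mu,\nu)\le C_{p,d}\sum_{k\ge0}2^{pk}\sum_{l\ge0}2^{-pl}\sum_{F\in\mathcal P_l}\bigl|\mu(2^kF\cap B_k)-\nu(2^kF\cap B_k)\bigr|.
\]
The construction is recursive. Inside each annulus, at each dyadic level, mass that matches between a cube's children stays put; the excess is transported at cost at most $(\text{diam})^p\asymp 2^{p(k-l)}$. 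Mass mismatched between annuli is moved at cost at most $2^{pk}$, which is the $l=0$ term.

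\textbf{Probabilistic step.} For any Borel set $A$, since $n\,e_n^{(\mathbf X)}(A)\sim\mathrm{Bin}(n,\mathfrak p_0(A))$,
\[
\mathbb E\bigl|e_n^{(\mathbf X)}(A)-\mathfrak p_0(A)\bigr|\le\min\bigl\{2\mathfrak p_0(A),\sqrt{\mathfrak p_0(A)/n}\bigr\}.
\]
Apply this with $A=2^kF\cap B_k$. Cauchy--Schwarz over the $2^{dl}$ cubes of $\mathcal P_l$ gives
\[
\sum_{F\in\mathcal P_l}\sqrt{\mathfrak p_0(2^kF\cap B_k)/n}\le\sqrt{2^{dl}\,\mathfrak p_0(B_k)/n}.
\]
The trivial bound gives $2\mathfrak p_0(B_k)$ for the same sum. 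Writing $a_k=\mathfrak p_0(B_k)$, the contribution of annulus $k$ is at most
\[
2^{pk}\sum_l 2^{-pl}\min\bigl\{2a_k,\sqrt{2^{dl}a_k/n}\bigr\}.
\]
The $l$-sum splits into the three regimes of the theorem. If $p>d/2$ the geometric series converges and gives $\sqrt{a_k/n}$. If $p=d/2$ every term up to the cutoff $2^{dl}\approx na_k$ is of size $\sqrt{a_k/n}$, giving $\sqrt{a_k/n}\,\log(2+na_k)$. If $p<d/2$ I truncate at $2^{dl}\approx na_k$ and obtain $a_k(na_k)^{-p/d}$.

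\textbf{Summing over annuli.} Markov's inequality gives $a_k\le 2^{-q(k-1)}$. I then sum $2^{pk}\min\{a_k,\cdot\}$ over $k$, splitting at the threshold $k$ where $2^{-qk}\approx 1/n$, that is $2^{k}\approx n^{1/q}$. Below the threshold the regime-dependent bound, together with Hölder/Cauchy--Schwarz against $\sum_k 2^{qk}a_k\lesssim1$, produces $n^{-1/2}$, $n^{-1/2}\log(1+n)$ or $n^{-p/d}$. Above the threshold the trivial bound $2^{pk}a_k\le 2^{(p-q)k}$ sums to $n^{-(q-p)/q}$. This last step is where I expect the main obstacle. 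The exclusions $q\neq 2p$ and $q\neq d/(d-p)$ exist because at those values the two geometric series in $k$ balance, which introduces an extra logarithm. The exponent bookkeeping must be done carefully so that each case yields exactly the stated sum of two terms with a constant depending only on $p,d,q$.
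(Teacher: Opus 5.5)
The paper gives no proof of this statement; it is quoted from \cite{fournier2015rate}. Your outline is essentially their argument: the multiscale bound $\mathcal W_p^p(\mu,\nu)\lesssim\sum_k 2^{pk}\sum_l 2^{-pl}\sum_{F}|\mu(2^kF\cap B_k)-\nu(2^kF\cap B_k)|$, the binomial estimate $\min\{2\mathfrak p_0(A),\sqrt{\mathfrak p_0(A)/n}\}$, Cauchy--Schwarz over the $2^{dl}$ cells, and a final sum over annuli. Reading the right-hand side as a bound on $\mathbb E\,\mathcal W_p^p$ is in fact necessary. Under $x\mapsto\lambda x$, the printed left side $\varepsilon_{n,p}=\mathbb E\,\mathcal W_p$ scales like $\lambda$, while $M_q^{p/q}$ scales like $\lambda^p$. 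Hence for $p>1$ and any non-degenerate $\mathfrak p_0$ the printed inequality fails as $\lambda\to0$. What is true, and what the source proves, is the $\mathcal W_p^p$ version; $\varepsilon_{n,p}$ is then controlled only by the $1/p$-th power of the right-hand side. For $p=1$, the only case the paper uses, nothing changes.

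The step you flag as delicate needs more than you wrote, in two places.

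\emph{The below-threshold sum and the logarithm.} Your claim that the below-threshold annuli produce $n^{-1/2}$, $n^{-1/2}\log(1+n)$ or $n^{-p/d}$ holds only when $q>2p$ (resp.\ $q>pd/(d-p)$). Otherwise that sum is dominated by the annuli with $2^{qk}\approx n$ and is of order $n^{-(q-p)/q}$. This matters for $p=d/2$, $q<2p$, which includes $d=2$, $p=1$, $q\in(1,2)$. If you bound $\log(2+na_k)\le\log(2+n)$ and then apply Cauchy--Schwarz, you get $n^{-(q-p)/q}\log n$. That is not $O(n^{-1/2}\log(1+n)+n^{-(q-p)/q})$, because $(q-p)/q<1/2$.

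The fix is the device used in the source. Each per-annulus bound $\min\{a,\sqrt{a/n}\}$, $\min\{a,\sqrt{a/n}\log(2+na)\}$, $\min\{a,a(na)^{-p/d}\}$ is nondecreasing in $a$. So you may substitute the Markov bound $a_k\le 2^{q}2^{-qk}$ (with $M_q=1$). Everything then becomes an explicit geometric series in $k$. Near the threshold $2^{qK}\approx n$ the logarithm is only $O(1+K-k)$, which the geometric weight absorbs. Your Cauchy--Schwarz/H\"older route against $\sum_k 2^{qk}a_k\lesssim 1$ also works, provided you insert the Markov bound inside the logarithm.

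\emph{The balance point for $p<d/2$.} It is not $q=d/(d-p)$. The below-threshold terms $2^{pk}(2^{-qk})^{1-p/d}n^{-p/d}$ have ratio $2^{p-q(d-p)/d}$. This equals $1$ exactly when $q=pd/(d-p)$, i.e.\ when $n^{-p/d}=n^{-(q-p)/q}$. That value coincides with the printed exclusion only for $p=1$. For $p\neq1$ your argument yields an extra $\log n$ at $q=pd/(d-p)$, a value the printed statement does not exclude. The exclusion should therefore read $q\neq pd/(d-p)$.
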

If $\mathfrak{p}_0$ ha sufficiently many moments, then the term $n^{-(q-p)/q}$ is small and can be neglected. 
We need this result when proving Theorems \ref{consistenza multinomiale} and \ref{consistencydir}, where we have to provide rates for the expected 1-Wasserstein distance between the empirical measure of the synthetic data and the data generating distribution. 

\subsection{Bayesian consistency}
To prove consistency in the Dirichlet-Multinomial subcase we need the following result, arising from the theory of Bayesian consistency and posterior contraction rates (\cite{GhosalvanderVaart2017}).
Being in the case of Bayesian parametric models, consider the metric space $(\Theta, d_{\Theta})$, where $\Theta$ is the parameter space. We denote $\mathcal{W}_p^{\mathcal{P}(\Theta)}(\cdot , \cdot )$ the $p$-Wasserstein distance defined in $(\Theta, d_{\theta})$, assuming it is a complete and separable metric space. 

Let $X_1,..., X_n$ be a sequence of categorical random variables taking values in the finite set $\{a_1,...,a_k\} \subset \mathbb{X}$. The parameter space $\Theta$, in this case, coincides with the interior of the $k-1$ dimensional simplex 
\begin{equation*}
    \mathscr{S}^{k-1} = \{ (\theta_1,...,\theta_{k-1}) \in [0,1]^{k-1} : \sum_{i=1}^{k-1} \theta_i \leq 1 \}.
\end{equation*}
\begin{proposition}[\cite{dolera2023strong}]
\label{pcrPara}
    Let $k \geq 2$. Let $\phi$ a prior on $\mathscr{S}^{k-1}$. If $\phi$ has a Lebesgue density $q$ such that $q \in C^1(\overline{\mathscr{S}^{k-1}})$ and $q(\theta) = 0$ for any $\theta \in \partial \mathscr{S}^{k-1}$, then as $n \rightarrow \infty$
    \begin{equation*}
       \mathbb{E}_{\otimes^n \mathfrak{p}_0} \left[ \mathcal{W}_2^{\mathcal{P}(\Theta)}(\psi_n( \ \cdot \mid \mathbf{X}), \delta_{\theta_0})  \right] = O(n^{-1/2}),
    \end{equation*}
    which is the optimal rate.
\end{proposition}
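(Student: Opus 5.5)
The statement to prove is Proposition~\ref{pcrPara}, which is quoted from \cite{dolera2023strong}. I would not rederive it in full generality. Instead I would sketch why the rate $n^{-1/2}$ holds, and organise the argument so that it can be checked directly in the Dirichlet prior case used later in the proof of Theorem~\ref{consistenza multinomiale}. That case is where $q$ is a Dirichlet$(\alpha_1,\dots,\alpha_k)$ density with all $\alpha_i\ge 2$. There $q\in C^1$ up to the boundary and vanishes on $\partial\mathscr S^{k-1}$, which matches the hypothesis of the proposition.

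The first step is a bias--variance decomposition. Since one marginal is a Dirac mass, the only coupling of $\psi_n(\cdot\mid\mathbf X)$ and $\delta_{\theta_0}$ is the product coupling, so
\[
\mathcal W_2^{\mathcal P(\Theta)}(\psi_n(\cdot\mid\mathbf X),\delta_{\theta_0})^2=\int|\theta-\theta_0|^2\,\psi_n(d\theta\mid\mathbf X)=\operatorname{tr}\operatorname{Cov}_{\psi_n}(\theta)+|\hat\theta_n-\theta_0|^2 ,
\]
where $\hat\theta_n$ is the posterior mean. By Jensen's inequality it then suffices to show $\mathbb E_{\otimes^n\mathfrak p_0}[\operatorname{tr}\operatorname{Cov}_{\psi_n}]=O(n^{-1})$ and $\mathbb E_{\otimes^n\mathfrak p_0}|\hat\theta_n-\theta_0|^2=O(n^{-1})$.

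The second step computes both terms in the Dirichlet case. With bin counts $C_i$, the posterior is Dirichlet$(\alpha_i+C_i)$, so each posterior variance is at most $1/(\sum_i\alpha_i+n+1)=O(1/n)$ deterministically. The posterior mean is $(\alpha_i+C_i)/(\alpha_0+n)$, where $\alpha_0=\sum_i\alpha_i$. Its squared error splits into a bias term of order $n^{-2}$ and a multinomial variance term $\theta_{0,i}(1-\theta_{0,i})/n$. This gives $O(n^{-1})$ with constants uniform in $\theta_0$. I would note that this route does not need $\theta_0$ in the interior.

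For a general $C^1$ density vanishing on the boundary, I would follow the Laplace-type argument of \cite{dolera2023strong}. Write the posterior as proportional to $q(\theta)\prod_i\theta_i^{C_i}$. Compare it with the Dirichlet$(C_i+1)$ posterior arising from the flat prior. Then use the $C^1$ regularity of $q$ to show that the reweighting by $q$ shifts the posterior mean by $O(1/n)$ and inflates the second moment by at most a constant factor.

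I expect the main obstacle to be controlling the ratio of normalising constants $\int q\prod\theta_i^{C_i}\big/\int\prod\theta_i^{C_i}$ from below, particularly on the event where the MLE lies near the boundary or $q$ is small there. The vanishing of $q$ on the boundary is exactly what makes the relevant integration-by-parts identities valid without boundary terms. I would handle the near-boundary events by showing they have exponentially small probability, via Hoeffding's inequality, and would cite \cite{dolera2023strong} for the remaining details.
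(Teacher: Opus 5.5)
Your Dirichlet-case argument is correct and complete. It is a genuinely different route from the paper's, which gives no proof: the paper only cites \cite{dolera2023strong} and remarks that Dirichlet priors with $\alpha_i\ge 2$ satisfy the hypotheses. With $\alpha_0=\sum_i\alpha_i$, your Dirac-coupling identity, bias--variance split, explicit Dirichlet moments and Jensen step give the non-asymptotic bound $\mathbb E[\mathcal W_2^2]\le(\alpha_0+n+1)^{-1}+(n+2\alpha_0^2)(\alpha_0+n)^{-2}\le(2+\alpha_0)/n$. This holds uniformly in $\theta_0$ over the closed simplex and for every $\alpha_i>0$. The paper uses the proposition in only one place, the proof of Theorem~\ref{consistenza multinomiale}, and there your computation is strictly better than the citation. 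The assumption $\alpha_i\ge 2$ in that theorem is an artefact of going through the general $C^1$ result and can be dropped. Your decomposition also delivers the ``optimal rate'' clause, which you leave unaddressed. Indeed, $\mathcal W_2^2\ge\operatorname{tr}\operatorname{Cov}_{\psi_n}=\sum_{i<k}\mu_i(1-\mu_i)/(\alpha_0+n+1)$ with $\mu_i=(\alpha_i+C_i)/(\alpha_0+n)$, and this is of exact order $n^{-1}$ unless $\theta_0$ is a vertex. What the citation buys in exchange is coverage of non-conjugate priors.

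For that general case your paragraph is only a sketch, and the obstacle you single out is not a technicality. As transcribed, the proposition imposes no positivity on $q$, and without it the claim is false. For $k=2$, take $q$ a $C^1$ bump supported in $[0.6,0.9]$ and $\theta_0=0.3$; then $\mathcal W_2^{\mathcal P(\Theta)}(\psi_n(\cdot\mid\mathbf X),\delta_{\theta_0})\ge 0.3$ for all $n$. A hypothesis such as $q(\theta_0)>0$ must be added; given the boundary condition, it forces $\theta_0$ into the interior. With it, your reweighting idea closes in a few lines and no mean-shift analysis is needed.

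Let $\Pi_n=\mathrm{Dirichlet}(C_1+1,\dots,C_k+1)$ be the flat-prior posterior. Then $\psi_n=q\,\Pi_n/\int q\,d\Pi_n$, so $\mathcal W_2^2\le\|q\|_\infty\int|\theta-\theta_0|^2\,\Pi_n(d\theta)/\int q\,d\Pi_n$.
\begin{itemize}
\item On $G=\{|C/n-\theta_0|\le r\}$, Chebyshev under $\Pi_n$ gives $\int q\,d\Pi_n\ge q(\theta_0)/4$ once $r$ is small and $n$ is large.
\item On $G^c$, which has probability $O(e^{-cn})$ by Hoeffding, use $\mathcal W_2^2\le 2$.
\item Your Dirichlet computation with $\alpha_i=1$ bounds $\mathbb E\int|\theta-\theta_0|^2\,\Pi_n(d\theta)$ by $(2+k)/n$.
\end{itemize}
This uses only boundedness of $q$ and its continuity and positivity at $\theta_0$. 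Neither the $C^1$ regularity nor the vanishing on $\partial\mathscr S^{k-1}$ plays any role, so the integration-by-parts and boundary-term concerns you raise do not arise on this route.
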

%% qualcosina sul risultato sui modelli dominati 
Notice that the above proposition holds assuming a Dirichlet prior with parameters $(\alpha_1,...,\alpha_k)$ such that $\alpha_i \geq 2$ for all $i=1,...,k$.
\subsection{Lipschitz continuity of the Dirichlet-Multinomial operator}
We state and prove a result, needed in the proof of Theorem \ref{consistenza multinomiale}, concerning a property of the operator $\mathscr{T} : (\Theta, d_{\Theta}) \rightarrow  (\mathcal{P} (\mathbb{X}),\mathcal{W}_1^{\mathcal{P}(\mathbb{X})})$, which for a Dirichlet-Multinomial model is defined as
\begin{equation*}
    \mathscr{T}(\theta) = \sum_{i=1}^k \theta_i \delta_{a_i}
\end{equation*}
for all $\theta \in \Theta \equiv \mathscr{S}^{k-1}$, where $\theta_k := 1 - \theta_1 - \cdots - \theta_{k-1}$. We assume $d_{\Theta}$ equal to the metric induced by the $l^2$ norm in $\mathbb{R}^{k-1}$.
\begin{lemma}
\label{lemmino}
    The map $\mathscr{T} : (\Theta, d_{\Theta}) \rightarrow  (\mathcal{P} (\mathbb{X}),\mathcal{W}_1^{\mathcal{P}(\mathbb{X})})$ defined above is Lipschitz continuous, with Lipschitz constant less or equal to $\sqrt{k-1}D$, with $D:= \text{diam}(\mathbb{X})$. 
\end{lemma}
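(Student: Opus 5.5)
The plan is to use the Kantorovich--Rubinstein dual formulation and control $\mathcal W_1(\mathscr T(\theta),\mathscr T(\theta'))$ by an explicit transport plan. Fix $\theta,\theta'\in\Theta$, extended by $\theta_k=1-\sum_{i<k}\theta_i$ and $\theta'_k=1-\sum_{i<k}\theta'_i$.

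For any $1$-Lipschitz $f:\mathbb X\to\mathbb R$ we may subtract the constant $f(a_k)$. Since $\sum_{i=1}^k(\theta_i-\theta'_i)=0$, this gives
\[
\int f\,d\mathscr T(\theta)-\int f\,d\mathscr T(\theta')=\sum_{i=1}^{k}(\theta_i-\theta'_i)f(a_i)=\sum_{i=1}^{k-1}(\theta_i-\theta'_i)\bigl(f(a_i)-f(a_k)\bigr).
\]
Each factor satisfies $|f(a_i)-f(a_k)|\le d_{\mathbb X}(a_i,a_k)\le D$. Hence the left side is bounded by $D\sum_{i=1}^{k-1}|\theta_i-\theta'_i|$. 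By Cauchy--Schwarz, $\sum_{i=1}^{k-1}|\theta_i-\theta'_i|\le\sqrt{k-1}\,\|\theta-\theta'\|_{2}$.

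Taking the supremum over $1$-Lipschitz $f$ and applying Kantorovich--Rubinstein duality gives
\[
\mathcal W_1\bigl(\mathscr T(\theta),\mathscr T(\theta')\bigr)\le\sqrt{k-1}\,D\,d_\Theta(\theta,\theta'),
\]
which is the claim. Duality is valid here because $\mathbb X$ is Polish and totally bounded, so every measure has finite first moment.

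If one prefers to avoid duality, the same bound comes from a coupling. Keep mass $\min(\theta_i,\theta'_i)$ fixed at each $a_i$, and route the excess mass through $a_k$. Moving $|\theta_i-\theta'_i|$ between $a_i$ and $a_k$ for $i<k$ costs at most $D|\theta_i-\theta'_i|$ each.

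The only point needing care is the bookkeeping of the $k$-th coordinate, which is determined by the others; recentring $f$ at $a_k$ handles it. I expect no real obstacle.
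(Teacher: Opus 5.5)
Your proof is correct and reaches the same constant $\sqrt{k-1}\,D$, but by a different route from the paper's. The paper never invokes duality. It uses $\mathcal W_1(\mathscr T(\theta),\mathscr T(\phi))\le D\,\lVert\mathscr T(\theta)-\mathscr T(\phi)\rVert_{\mathrm{TV}}$ (Villani, Theorem 6.15) and writes the total variation as $\tfrac12\sum_{i=1}^k|\theta_i-\phi_i|$. It then removes the dependent coordinate in a separate step, $|\theta_k-\phi_k|\le\sum_{i<k}|\theta_i-\phi_i|$, before applying Cauchy--Schwarz. You instead work on the Kantorovich--Rubinstein dual side and let the recentring $f\mapsto f-f(a_k)$ absorb the $k$-th coordinate. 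This is neat bookkeeping, and it gives the more geometric intermediate bound $\sum_{i<k}d_{\mathbb X}(a_i,a_k)\,|\theta_i-\theta'_i|$. The trade-off is that you need the nontrivial direction of duality ($\mathcal W_1\le\sup_f$). The paper's route only requires exhibiting one coupling (the maximal TV coupling), i.e.\ the trivial direction of an infimum. Since both measures are finitely supported, your use of duality reduces to finite LP duality and is safe.

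One caveat concerns your coupling aside. Read literally as a single plan with $\pi(a_i,a_k)=(\theta_i-\theta'_i)_+$ and $\pi(a_k,a_i)=(\theta'_i-\theta_i)_+$ for $i<k$, it is not a coupling in general. The forced value $\pi(a_k,a_k)=\theta_k-\sum_{i<k}(\theta'_i-\theta_i)_+$ can be negative, for example $\theta=(1,0,0)$, $\theta'=(0,1,0)$ in extended coordinates. It becomes correct as a two-step transport combined with the triangle inequality for $\mathcal W_1$: first push all excesses into $a_k$, then redistribute the deficits out of $a_k$. Alternatively, keep $\min(\theta_i,\theta'_i)$ in place and move the remaining mass directly, which is exactly the paper's TV bound.
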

\begin{proof}
For any $\theta,\phi \in \Theta$, by \cite{villani2008optimal} (Theorem 6.15), we have
\begin{equation*}
    \mathcal{W}_1^{\mathcal{P}(\mathbb{X})}(\mathscr{T}(\theta),\mathscr{T}(\phi)) \leq D \ \lVert \mathscr{T}(\theta) - \mathscr{T}(\phi) \rVert_{\text{TV}}. 
\end{equation*}
By \cite{roch_mdp_2024} (Lemma 4.1.9), we can say
\begin{equation*}
    \lVert \mathscr{T}(\theta) - \mathscr{T}(\phi) \rVert_{\text{TV}} = \dfrac{1}{2} \sum_{i=1}^k | \theta_i - \phi_i | ,
\end{equation*}
where
\begin{equation*}
\begin{aligned}
    \sum_{i=1}^k | \theta_i - \phi_i | = & \sum_{i=1}^{k-1} | \theta_i - \phi_i |  + | \theta_k - \phi_k | =  \\ &  \sum_{i=1}^{k-1} | \theta_i - \phi_i | + | - \theta_{k-1} + \phi_{k-1} + ...  - \theta_{1} + \phi_{1} | \leq \\ & 2 \sum_{i=1}^{k-1} | \theta_i - \phi_i |  \leq 2 \sqrt{k-1} \ \lVert \theta - \phi \lVert_{l^2}
\end{aligned}
\end{equation*}
by triangular inequality and Cauchy-Schwarz inequality.
\end{proof}

\section{Proofs}
\subsection{Proof of Theorem \ref{theorem_privacy_main}}
By Proposition 2, it is enough to prove instance-level \((\varepsilon,\delta)\)-probabilistic differential privacy. Work under the fixed-dataset notation introduced above. For ease of notation let $K = K_m^{(n)}$.
Fix a neighbouring dataset \(\tilde{\mathbf x}\) such that \(h(\mathbf x,\tilde{\mathbf x})=1\). By exchangeability, assume that the datasets differ only in the first coordinate:
\[
x_1=x_l^*, \qquad \tilde x_1=b\neq x_l^*, \qquad x_i=\tilde x_i,\quad i=2,\ldots,n.
\]
We write \(q_{\mathbf x}\) and \(q_{\tilde{\mathbf x}}\) for densities of \(Q_{\mathbf x}\) and \(Q_{\tilde{\mathbf x}}\) with respect to a common dominating measure. Define
\[
A_{\mathbf x,\tilde{\mathbf x}} := \left\{ z\in \mathbb X^m: q_{\mathbf x}(z)\le e^\varepsilon q_{\tilde{\mathbf x}}(z) \right\}.
\]
We now control
\[
Q_{\mathbf x}(A_{\mathbf x,\tilde{\mathbf x}}^c)
\]
for the two possible types of value \(b\).

\paragraph{Case 1: \(b\) is already present in \(\mathbf x\).}

Assume
\[
b=x_t^* \qquad\text{for some }t\neq l.
\]

First suppose that \(n_l\ge 2\). Then \(x_l^*\) remains an old species under \(\tilde{\mathbf x}\), with frequency \(n_l-1\), while \(x_t^*\) has frequency \(n_t+1\). The number of old species is the same under \(\mathbf x\) and \(\tilde{\mathbf x}\). Using the posterior-predictive density induced by the Pitman--Yor EPPF, we obtain
\[
\frac{q_{\mathbf x}(z)}{q_{\tilde{\mathbf x}}(z)} = \frac{(n_t-\sigma)_{S_t}}{(n_t+1-\sigma)_{S_t}} \frac{(n_l-\sigma)_{S_l}}{(n_l-1-\sigma)_{S_l}}.
\]
Equivalently,
\[
\frac{q_{\mathbf x}(z)}{q_{\tilde{\mathbf x}}(z)} = \frac{n_t-\sigma}{n_t+S_t-\sigma} \frac{n_l+S_l-1-\sigma}{n_l-1-\sigma}.
\]
Therefore, in this subcase,
\[
A_{\mathbf x,\tilde{\mathbf x}}^c \subseteq \left\{ \frac{n_t-\sigma}{n_t+S_t-\sigma} \frac{n_l+S_l-1-\sigma}{n_l-1-\sigma} > e^\varepsilon \right\}.
\]

Now suppose instead that \(n_l=1\). Then the value \(x_l^*\) disappears from \(\tilde{\mathbf x}\). Hence, under \(Q_{\tilde{\mathbf x}}\), the fixed value \(x_l^*\) is not an old atom. Since \(H\) is nonatomic,
\[
Q_{\tilde{\mathbf x}} \left( \exists r\le m: Z_r=x_l^* \right) = 0.
\]
Thus the event
\[
\{S_l\ge 1\}
\]
is singular for \(Q_{\mathbf x}\) with respect to \(Q_{\tilde{\mathbf x}}\), and it must be included in the bad privacy event.

On the complementary event \(\{S_l=0\}\), the value \(x_l^*\) is not released. Then the released sample contains the same new species under both \(\mathbf x\) and \(\tilde{\mathbf x}\), but \(\tilde{\mathbf x}\) has one fewer old species. The likelihood ratio becomes
\[
\frac{q_{\mathbf x}(z)}{q_{\tilde{\mathbf x}}(z)} = \frac{\theta+(j+K-1)\sigma}{\theta+(j-1)\sigma} \frac{n_t-\sigma}{n_t+S_t-\sigma}.
\]
Consequently, when \(n_l=1\),
\[
A_{\mathbf x,\tilde{\mathbf x}}^c \subseteq \{S_l\ge 1\} \cup \left( \{S_l=0\} \cap \left\{ \frac{\theta+(j+K-1)\sigma}{\theta+(j-1)\sigma} \frac{n_t-\sigma}{n_t+S_t-\sigma} > e^\varepsilon \right\} \right).
\]

\paragraph{Case 2: \(b\) is not present in \(\mathbf x\).}

Assume
\[
b\notin\{x_1^*,\ldots,x_j^*\}.
\]
Let
\[
E_b:=\{\exists r\le m: Z_r=b\}.
\]
Since \(b\) is fixed before generating \(Z\), and since \(H\) is nonatomic,
\[
Q_{\mathbf x}(E_b)=0.
\]
Thus we may work on \(E_b^c\), up to a \(Q_{\mathbf x}\)-null event.

First suppose that \(n_l\ge 2\). Then \(x_l^*\) remains an old species under \(\tilde{\mathbf x}\), with frequency \(n_l-1\), and \(b\) is an old singleton under \(\tilde{\mathbf x}\) with released count zero on \(E_b^c\). The likelihood ratio is
\[
\frac{q_{\mathbf x}(z)}{q_{\tilde{\mathbf x}}(z)} = \frac{\theta+j\sigma}{\theta+(j+K)\sigma} \frac{(n_l-\sigma)_{S_l}}{(n_l-1-\sigma)_{S_l}}.
\]
Equivalently,
\[
\frac{q_{\mathbf x}(z)}{q_{\tilde{\mathbf x}}(z)} = \frac{\theta+j\sigma}{\theta+(j+K)\sigma} \frac{n_l+S_l-1-\sigma}{n_l-1-\sigma}.
\]
Hence, since \(Q_{\mathbf x}(E_b)=0\),
\[
Q_{\mathbf x}(A_{\mathbf x,\tilde{\mathbf x}}^c) \le Q_{\mathbf x} \left( \frac{\theta+j\sigma}{\theta+(j+K)\sigma} \frac{n_l+S_l-1-\sigma}{n_l-1-\sigma} > e^\varepsilon \right).
\]

Now suppose that \(n_l=1\). Then \(\tilde{\mathbf x}\) is obtained by replacing the singleton old species \(x_l^*\) by the singleton old species \(b\). On the event \(\{S_l\ge 1\}\), the released sample contains \(x_l^*\), which is not an old atom under \(\tilde{\mathbf x}\). Since \(H\) is nonatomic,
\[
Q_{\tilde{\mathbf x}}(S_l\ge 1)=0.
\]
Thus \(\{S_l\ge 1\}\) is again singular.

On the event
\[
\{S_l=0\}\cap E_b^c,
\]
neither \(x_l^*\) nor \(b\) appears in the released sample. The two datasets then differ only by the label of an old singleton with released count zero. The old-species contribution is
\[
(1-\sigma)_0=(1-\sigma)_0=1,
\]
the number of old species is \(j\) for both datasets, and the new synthetic species are the same under both descriptions. Therefore
\[
\frac{q_{\mathbf x}(z)}{q_{\tilde{\mathbf x}}(z)} = 1 \qquad \text{on } \{S_l=0\}\cap E_b^c.
\]
Since \(Q_{\mathbf x}(E_b)=0\), we conclude that, in this subcase,
\[
Q_{\mathbf x}(A_{\mathbf x,\tilde{\mathbf x}}^c) \le Q_{\mathbf x}(S_l\ge 1).
\]

Then, the exact pairwise bad events are:
\[
\mathcal E_{l,t}^{\mathrm{old}}(\varepsilon) :=
\begin{cases}
\left\{ \dfrac{n_t-\sigma}{n_t+S_t-\sigma} \dfrac{n_l+S_l-1-\sigma}{n_l-1-\sigma} > e^\varepsilon \right\}, & n_l\ge 2, \\
\{S_l\ge 1\} \cup \left( \{S_l=0\} \cap \left\{ \dfrac{\theta+(j+K-1)\sigma}{\theta+(j-1)\sigma} \dfrac{n_t-\sigma}{n_t+S_t-\sigma} > e^\varepsilon \right\} \right), & n_l=1,
\end{cases}
\]
for neighbors changing $x^*_l$ into an already observed $x^*_t$, and
\[
\mathcal E_l^{\mathrm{new}}(\varepsilon) :=
\begin{cases}
\left\{ \dfrac{\theta+j\sigma}{\theta+(j+K)\sigma} \dfrac{n_l+S_l-1-\sigma}{n_l-1-\sigma} > e^\varepsilon \right\}, & n_l\ge 2, \\
\{S_l\ge 1\}, & n_l=1,
\end{cases}
\]
for neighbors changing $x^*_l$ into a value not already present in $\mathbf x$.

Thus,
\[
\tilde \delta(\varepsilon,\mathbf x) := \max\left\{ \max_{1\le l\le j} Q_{\mathbf x}\left(\mathcal E_l^{\mathrm{new}}(\varepsilon)\right),  \max_{\substack{1\le l,t\le j\\ t\neq l}} Q_{\mathbf x}\left(\mathcal E_{l,t}^{\mathrm{old}}(\varepsilon)\right) \right\},
\]
\color{black}

\subsection{Proof of Corollary \ref{cor_instance_dp_dirichlet}}
Set \(\sigma=0\) in Theorem 1. If \(n_l\ge2\), the bad event for changing \(x_l^*\) into a new value is
\[
\left\{ \frac{n_l+S_l-1}{n_l-1}>e^\varepsilon \right\} = \left\{ S_l>(e^\varepsilon-1)(n_l-1) \right\} = \left\{ S_l\ge k_l(\varepsilon) \right\}.
\]
The bad event for changing \(x_l^*\) into an old value is contained in the same event, since \(n_t/(n_t+S_t)\le1\). If \(n_l=1\), both old-value and new-value changes reduce to the singular event \(\{S_l\ge1\}=\{S_l\ge k_l(\varepsilon)\}\), because the remaining likelihood-ratio condition in the old-value case is bounded by \(1<e^\varepsilon\). Hence Theorem 1 applies with
\[
\max_{1\le i\le j} Q_{\mathbf x}\left(S_i\ge k_i(\varepsilon)\right)
\]
in place of \(\delta^\sharp(\varepsilon,\mathbf x)\).
Under the Dirichlet posterior-predictive distribution, conditionally on \(K_n=j\) and \(\mathbf N_n=\mathbf n\),
\[
S_i \sim \mathrm{BetaBin}\left(m,n_i,\theta+n-n_i\right).
\]
Therefore
\[
Q_{\mathbf x}\left(S_i\ge k_i(\varepsilon)\right) = \sum_{s=k_i(\varepsilon)}^m \binom ms \frac{ (n_i)_s (\theta+n-n_i)_{m-s} }{ (\theta+n)_m },
\]
which gives the stated \(\delta^\sharp_0(\varepsilon,\mathbf x)\).

\subsection{Proof of Theorem \ref{cor_dp}}
By the proof of Corollary \ref{cor_instance_dp_dirichlet}, for the fixed dataset \(\mathbf x\), the mechanism satisfies instance-level \((\varepsilon,\delta)\)-differential privacy for every
\[
\delta> \delta^\sharp_0(\varepsilon,\mathbf x) = \max_{1\le i\le j} Q_{\mathbf x} \left( S_i\ge k_i(\varepsilon) \right),
\]
Moreover, conditionally on \(K_n=j\) and \(\mathbf N_n=\mathbf n\),
\[
S_i \sim \mathrm{BetaBin} \left( m, n_i, \theta+n-n_i \right).
\]

Therefore the instance-level privacy parameter depends on \(\mathbf x\) only through the multiplicities \(n_i\). For \(r=1,\ldots,n\), let
\[
Y_{r,n,m}\sim \mathrm{BetaBin} \left( m, r, \theta+n-r \right),
\]
and set
\[
\delta^{\mathrm G}_{n,m}(\varepsilon) := \max_{1\le r\le n} \mathbb P \left( Y_{r,n,m}\ge k(r, \varepsilon) \right),
\]
where $k(r, \varepsilon)$ is as in \eqref{eq:ki_def}.
Then, for every dataset \(\mathbf x\), $\delta^\sharp_0(\varepsilon,\mathbf x) \le \delta^{\mathrm G}_{n,m}(\varepsilon)$.
Since \(\mathbf x\) was arbitrary, the instance-level guarantee holds uniformly over all datasets. Hence, for every pair \(\mathbf x,\tilde{\mathbf x}\in\mathcal X^n\) with \(h(\mathbf x,\tilde{\mathbf x})=1\), and every measurable \(B\in\mathcal X^m\),
\[
Q_{\mathbf x}(B) \le e^\varepsilon Q_{\tilde{\mathbf x}}(B) + \delta
\]
for every
\[
\delta>\delta^{\mathrm G}_{n,m}(\varepsilon).
\]
Thus the mechanism satisfies global \((\varepsilon,\delta)\)-differential privacy for every
\[
\delta\in[0,1) \quad\text{such that}\quad \delta>\delta^{\mathrm G}_{n,m}(\varepsilon).
\]

It remains to express and upper bound \(\delta^{\mathrm G}_{n,m}(\varepsilon)\). Since the beta-binomial probability mass function is
\[
\mathbb P(Y_{r,n,m}=s) = \binom ms \frac{ (r)_s(\theta+n-r)_{m-s} }{ (\theta+n)_m }, \qquad s=0,\ldots,m,
\]
we have
\[
\delta^{\mathrm G}_{n,m}(\varepsilon) = \max_{1\le r\le n} \sum_{s=k_r(\varepsilon)}^m \binom ms \frac{ (r)_s(\theta+n-r)_{m-s} }{ (\theta+n)_m },
\]
with the convention that the sum is zero whenever \(k_r(\varepsilon)>m\).

We now prove the simpler global upper bound. First consider \(r=1\). Then
\[
\begin{aligned}
\mathbb P(Y_{1,n,m}\ge 1) &= 1-\mathbb P(Y_{1,n,m}=0)                                     \\
&= 1- \frac{(\theta+n-1)_m}{(\theta+n)_m}                            \\
&= 1- \frac{\theta+n-1}{\theta+n+m-1}                                \\
&= \frac{m}{\theta+n+m-1}.
\end{aligned}
\]
This is the singular singleton contribution.

Now consider \(r\ge2\). Put
\[
a_\varepsilon:=e^\varepsilon-1.
\]
By definition,
\[
k_r(\varepsilon) = \left\lfloor a_\varepsilon(r-1) \right\rfloor+1.
\]
Since \(Y_{r,n,m}\) is integer-valued,
\[
\{Y_{r,n,m}\ge k_r(\varepsilon)\} = \{Y_{r,n,m}>a_\varepsilon(r-1)\}.
\]
Moreover,
\[
\mathbb E[Y_{r,n,m}] = \frac{mr}{\theta+n}.
\]
Therefore, by Markov's inequality,
\[
\begin{aligned}
\mathbb P \left( Y_{r,n,m}\ge k_r(\varepsilon) \right) &= \mathbb P \left( Y_{r,n,m}>a_\varepsilon(r-1) \right)                                                       \\
&\le \frac{ \mathbb E[Y_{r,n,m}] }{ a_\varepsilon(r-1) }                                                             \\
&= \frac{mr} {(\theta+n)(e^\varepsilon-1)(r-1)}                             \\
&\le \frac{2m} {(\theta+n)(e^\varepsilon-1)},
\end{aligned}
\]
where the last inequality uses \(r/(r-1)\le2\) for \(r\ge2\).

Combining the cases \(r=1\) and \(r\ge2\), we obtain
\[
\delta^{\mathrm G}_{n,m}(\varepsilon) \le \max\left\{ \frac{m}{\theta+n+m-1}, \frac{2m}{(\theta+n)(e^\varepsilon-1)} \right\}.
\]
Thus \(\delta^{\mathrm G}_{n,m}(\varepsilon)\le\bar\delta_{n,m}(\varepsilon)\), and every \(\delta>\bar\delta_{n,m}(\varepsilon)\) is admissible.
Hence, for fixed \(\theta\) and \(\varepsilon\),
\[
\bar\delta_{n,m}(\varepsilon) = O\left(\frac mn\right).
\]
This completes the proof.

\subsection{Proof of Corollary \ref{easyprop}}
  In this case, the mechanism cannot generate new unique values, because $z = j$ and $\theta = z |\sigma |$, with $\sigma < 0$. Therefore the proof goes as that of Theorem \ref{theorem_privacy_main}, but only Case $1$ can occur. Moreover, no singularity for $Q_{\mathbf{x}}$ with respect to $Q_{\mathbf{\tilde{x}}}$ happens. It follows that to satisfy instance-level $(\varepsilon,\delta)$-differential privacy it is enough to require
  \begin{equation}
  \label{ddd}
      \dfrac{n_t + |\sigma|}{n_t + S_t + |\sigma| } \dfrac{n_l + S_l - 1 + |\sigma|}{n_l - 1 + |\sigma|} \leq e^{\varepsilon}
  \end{equation}
  for all $t,l=1,...,j$ such that $t \neq l$. Since $\sigma$ can take any negative value, we require $n_i + |\sigma| - 1 \geq m/(e^\varepsilon-1)$ for all $i=1,...,j$ and (\ref{ddd}) is satisfied with probability $1$, i.e. $\delta = 0$. Thus if
  \begin{equation}
    m \leq (| \sigma | + n_i - 1)(e^\varepsilon - 1)
\end{equation}
holds for all $i = 1,...,j$, then we have instance-level $\varepsilon$-differential privacy.

\subsection{Proof of Theorem \ref{consistenza multinomiale}}
    The mechanism $Q_n$ has the following density with respect to the counting measure:
    \begin{equation*}
        q_n(\mathbf{z} \mid \mathbf{X} = \mathbf{x}) = \int_{\Theta} \left [ \prod_{j=1}^m f_\theta(z_j) \right ] \psi_n( d \theta \mid \mathbf{X} = \mathbf{x}),
    \end{equation*}
    where $f_\theta(z) = \left( \prod_{i=1}^k \theta_i^{\mathbbm{1}_{\{a_i\}}(z)} \right)  \mathbbm{1}_{\{a_1,...,a_k\}}(z)$ is the density of a Multinomial with support $\{a_1,...,a_k\}$, and $\psi_n$ is the posterior. Hence, by Fubini-Tonelli's theorem, we have 
    \begin{equation*}
    \begin{aligned}
         & \mathbb{E}_{\nu_{n,m}}\left [\mathcal{W}_1^{\mathcal{P}(\mathbb{X})}(e_m^{(\mathbf{Z})},\mathfrak{p}_0) \right ] = 
         \int_{\mathbb{X}^m \times \mathbb{X}^n} \mathcal{W}_1^{\mathcal{P}(\mathbb{X})}(e_m^{(\mathbf{z})},\mathfrak{p}_0) \ q_n(\mathbf{z} \mid \mathbf{X} = \mathbf{x}) \ d\mathbf{z} \ \mathfrak{p}_0^n(d\mathbf{x}) = \\ &
          \int_{\mathbb{X}^m \times \mathbb{X}^n} \left ( \int_{\Theta} \mathcal{W}_1^{\mathcal{P}(\mathbb{X})}(e_m^{(\mathbf{z})},\mathfrak{p}_0) \left [ \prod_{j=1}^m f_\theta(z_j) \right ] \psi_n( d \theta \mid \mathbf{X} = \mathbf{x})   \right ) d\mathbf{z} \ \mathfrak{p}_0^n(d\mathbf{x})  = \\ &
        \int_{\Theta \times \mathbb{X}^n } \left ( \int_{\mathbb{X}^m} \mathcal{W}_1^{\mathcal{P}(\mathbb{X})}(e_m^{(\mathbf{z})},\mathfrak{p}_0) \left [ \prod_{j=1}^m f_\theta(z_j) \right ]  d\mathbf{z}    \right ) \psi_n( d \theta \mid \mathbf{X} = \mathbf{x})  \ \mathfrak{p}_0^n(d\mathbf{x}) = \\
        &\mathbb{E}_{\psi_n \otimes \mathfrak{p}_0^n} \left [\mathbb{E} \left [ \mathcal{W}_1^{\mathcal{P}(\mathbb{X})}(e_m^{(\mathbf{Z})},\mathfrak{p}_0) \mid \theta \right ] \right ].
    \end{aligned}
    \end{equation*}
    By triangular inequality, we have
    \begin{equation*}
        \mathbb{E}_{\psi_n \otimes \mathfrak{p}_0^n} \left [\mathbb{E} \left [ \mathcal{W}_1^{\mathcal{P}(\mathbb{X})}(e_m^{(\mathbf{Z})},\mathfrak{p}_0) \mid \theta \right ] \right ] \leq  \mathbb{E}_{\psi_n \otimes \mathfrak{p}_0^n} \left [\mathbb{E} \left [ \mathcal{W}_1^{\mathcal{P}(\mathbb{X})}(e_m^{(\mathbf{Z})},\mathscr{T}(\theta)) \mid \theta \right ] \right ] +
         \mathbb{E}_{\psi_n \otimes \mathfrak{p}_0^n} \left [\mathbb{E} \left [ \mathcal{W}_1^{\mathcal{P}(\mathbb{X})}(\mathscr{T}(\theta),\mathfrak{p}_0) \mid \theta \right ] \right ].
    \end{equation*}
    By the property of the conditional expectation
    \begin{equation*}
        \mathbb{E}_{\psi_n \otimes \mathfrak{p}_0^n} \left [\mathbb{E} \left [ \mathcal{W}_1^{\mathcal{P}(\mathbb{X})}(\mathscr{T}(\theta),\mathfrak{p}_0) \mid \theta \right ] \right ] =  \mathbb{E}_{\psi_n \otimes \mathfrak{p}_0^n} \left [\mathcal{W}_1^{\mathcal{P}(\mathbb{X})}(\mathscr{T}(\theta),\mathfrak{p}_0) \right ].
    \end{equation*}
    We can write
    \begin{equation*}
        \mathbb{E}_{\psi_n \otimes \mathfrak{p}_0^n} \left [\mathcal{W}_1^{\mathcal{P}(\mathbb{X})}(\mathscr{T}(\theta),\mathfrak{p}_0) \right ] = \mathbb{E}_{\mathfrak{p}_0^n} \left [\int_{\Theta^2} \mathcal{W}_1^{\mathcal{P}(\mathbb{X})}(\mathscr{T}(\theta),\mathscr{T}(\phi)) \ \psi_n(d \theta | \mathbf{X} = \mathbf{x}) \ \delta_{\theta_0}(d \phi) \right ].
    \end{equation*}
    By Lemma \ref{lemmino} and H\"older's inequality, we obtain 
    \begin{equation*}
    \begin{aligned}
        & \mathbb{E}_{ \otimes^n \mathfrak{p}_0} \left [\int_{\Theta^2} \mathcal{W}_1^{\mathcal{P}(\mathbb{X})}(\mathscr{T}(\theta),\mathscr{T}(\phi)) \ \psi_n(d \theta \mid \mathbf{X} = \mathbf{x}) \ \delta_{\theta_0}(d \phi) \right ] \leq  \\
        & D \sqrt{k-1} \ \mathbb{E}_{\otimes^n \mathfrak{p}_0} \left [\int_{\Theta^2} \lVert \theta - \phi \rVert_{l^2} \ \psi_n(d \theta | \mathbf{X} = \mathbf{x}) \ \delta_{\theta_0}(d \phi) \right ]  \leq  \\
        & D \sqrt{k-1} \ \mathbb{E}_{\otimes^n \mathfrak{p}_0} \left [ \left ( \int_{\Theta^2} \lVert \theta - \phi \rVert_{l^2}^2 \ \psi_n(d \theta | \mathbf{X} = \mathbf{x}) \ \delta_{\theta_0}(d \phi) \right )^{1/2} \right ] =  \\
        & D\sqrt{k-1}\mathbb{E}_{\otimes^n \mathfrak{p}_0} \left[ \mathcal{W}_2^{\mathcal{P}(\Theta)}(\psi_n( \ \cdot \mid \mathbf{X}), \delta_{\theta_0})  \right] = O(n^{-1/2}),
    \end{aligned}
    \end{equation*}
    where the last equality is due to Proposition \ref{pcrPara}. Moreover, by Theorem \ref{fournier}, we have
    \begin{equation*}
    \mathbb{E}_{\psi_n \otimes \mathfrak{p}_0^n} \Bigl[
    \mathbb{E} \bigl[ \mathcal{W}_1^{\mathcal{P}(\mathbb{X})}(e_m^{(\mathbf{Z})}, \mathscr{T}(\theta)) 
    \mid \theta \bigr] \Bigr]
    =
    \begin{cases}
    O(m^{-1/2}) & \text{if } d < 2, \\[1mm]
    O(m^{-1/2} \log(1+m)) & \text{if } d = 2, \\[1mm]
    O(m^{-1/d}) & \text{if } d > 2.
    \end{cases}
    \end{equation*}

    since $Z_1,...,Z_m \mid \theta \iid \mathscr{T}(\theta)$ and a discrete measure with finite support has $q$-moments finite for all $q \geq 1$.

\subsection{Proof of Lemma \ref{concDP}}
We use a dyadic transport technique as in \cite{weed2017sharpasymptoticfinitesamplerates}, which allows to
compute the order of $\mathbb{E}_{\tilde p|X^{\left( n\right) }}\left( \mathcal{W}_1\left( P,H_{n}\right) \right) $, where $X^{(n)}=(X_1,\dots,X_n)$ and $P$ is a random measure distributed as $\tilde p |X^{(n)}$, which has distribution $\pi _{n}\left( \cdot |X^{\left( n\right) }\right) =\mathscr{D}%
\left( \theta +n,H_n\right) $.. 
Consider a dyadic partition $\left\{ 
\mathcal{Q}^{k}\right\} _{k=1}^{k^{\ast }}$ of $\mathbb{X}$ with parameter $%
\delta <1$: then Proposition 1 from \cite{weed2017sharpasymptoticfinitesamplerates} gives the inequality%
\begin{equation*}
\mathcal{W}_{1}\left( P,H_{n}\right) \leq \delta ^{k^{\ast }}+\sum_{k=1}^{k^{\ast
}}\delta ^{k-1}\sum_{Q_{i}^{k}\in \mathcal{Q}^{k}}\left\vert P\left(
Q_{i}^{k}\right) -H_{n}\left( Q_{i}^{k}\right) \right\vert ,
\end{equation*}

and notice we have $\sum_{Q_{i}^{k}\in \mathcal{Q}^{k}}\left\vert P\left(
Q_{i}^{k}\right) -H_{n}\left( Q_{i}^{k}\right) \right\vert =\left\vert
\left\vert \mathbf{P}^{k}-\mathbf{H}_{n}^{k}\right\vert \right\vert _{1}$, 
where 
$\mathbf{P}^{k}= \left(P\left( Q_{1}^{k}\right) ,...,P\left( Q_{m_{k}}^{k}\right) \right)
$, $\mathbf{H}_{n}^{k}=\left( H_{n}\left( Q_{1}^{k}\right) ,...,H_{n}\left(
Q_{m_{k}}^{k}\right) \right) $ and $m_{k}$ is the cardinality of the
partition $\mathcal{Q}^{k}$. Hence%
\begin{equation}\label{weedineq}
\mathcal{W}_{1}\left( P,H_{n}\right) \leq \delta ^{k^{\ast }}+\sum_{k=1}^{k^{\ast
}}\delta ^{k-1}\left\vert \left\vert \mathbf{P}^{k}-\mathbf{H}_{n}^{k}\right\vert \right\vert_{1} .
\end{equation}

$\mathbf{P}^{k}$ is known to be, conditionally on $X^{\left( n\right) }$, a random
vector with Dirichlet distribution, whose mean is the vector $H_{n}^{k}$.

If one considers a random vector $\mathbf{Y}\sim \text{Dirichlet}\left( \beta
_{1},...,\beta _{n}\right) $, defining $\mu _{i}=\frac{\beta _{i}}{\sum
\beta _{i}}=\frac{\beta _{i}}{\beta _{\cdot }}$ for $i=1,...n$, then 
\begin{equation*}
\begin{aligned}
\mathbb{E}\left( \left\vert \left\vert \mathbf{Y-\mu }\right\vert
\right\vert _{1}\right) 
&\leq \sqrt{n}\sqrt{\mathbb{E}\left( \left\vert
\left\vert \mathbf{Y-\mu }\right\vert \right\vert _{2}^{2}\right) }=\sqrt{n}\sqrt{\sum_{i=1}^{n}var\left( Y_{i}\right) }\\
&=\sqrt{n}\sqrt{%
\sum_{i=1}^{n}\frac{\mu _{i}\left( 1-\mu _{i}\right) }{\beta _{\cdot }+1}}=%
\sqrt{n}\sqrt{\frac{1}{\beta _{\cdot }+1}\left( 1-\sum \mu _{i}^{2}\right) }%
\leq \sqrt{n}\frac{1}{\sqrt{\beta _{\cdot }+1}}.
\end{aligned}
\end{equation*}

So in our case $\mathbb{E}%
_{\tilde p|X^{n}}\left( \left\vert \left\vert \mathbf{P}^{k}-\mathbf{H}_{n}^{k}\right\vert
\right\vert _{1}\right) \leq \sqrt{\frac{m_{k}}{\theta +n+1}}$, and then
taking the expectation of both sides in (\ref{weedineq}) yields%
\begin{equation*}
\mathbb{E}_{\tilde p|X^{n}}\left(\mathcal{W}_{1}\left( P,H_{n}\right) \right) \leq \delta
^{k^{\ast }}+\sum_{k=1}^{k^{\ast }}\delta ^{k-1}\mathbb{E}_{\tilde p|X^{n}}\left(
\left\vert \left\vert \mathbf{P}^{k}-\mathbf{H}_{n}^{k}\right\vert \right\vert _{1}\right)
\leq \delta ^{k^{\ast }}+\frac{1}{\sqrt{\theta +n+1}}\sum_{k=0}^{k^{\ast
}-1}\delta ^{k}\sqrt{m_{k}}.
\end{equation*}

Since we have that if $Q\in \mathcal{Q}^{k}$ then $\text{diam}\left( Q\right) \leq
\delta ^{k}$, it holds $m_{k}=O\left( \delta ^{-kd}\right) $. Then the previous inequality reads
\begin{equation*}
\mathbb{E}_{\tilde p|X^{n}}\left( \mathcal{W}_{1}\left( P,H_{n}\right) \right) \lesssim
\delta ^{k^{\ast }}+\frac{1}{\sqrt{\theta +n+1}}\sum_{k=0}^{k^{\ast
}-1}\delta ^{k}\delta ^{-\frac{kd}{2}}=\delta ^{k^{\ast }}+\frac{1}{\sqrt{%
\theta +n+1}}\sum_{k=0}^{k^{\ast }-1}\delta ^{k\left( 1-\frac{d}{2}\right) }.
\end{equation*}

If $d>2$, using $\sum_{k=0}^{k^{\ast }-1}\left( \delta ^{1-d/2}\right) ^{k}=%
\frac{\left( \delta ^{1-d/2}\right) ^{k^{\ast }}-1}{\delta ^{1-d/2}-1}$
yields the inequality
\begin{equation*}
\mathbb{E}_{\tilde p|X^{n}}\left( \mathcal{W}_{1}\left( P,H_{n}\right) \right) \lesssim
\delta ^{k^{\ast }}+\frac{1}{\sqrt{\theta +n+1}}\frac{\left( \delta
^{1-d/2}\right) ^{k^{\ast }}-1}{\delta ^{1-d/2}-1}. 
\end{equation*}

This holds for any integer $k^{\ast }$ and $\delta >0$, so we can optimize
the bound. If we take $\delta $ fixed and optimize in $t:=\delta ^{k^{\ast
}} $, then%
\begin{equation*}
\mathbb{E}_{\tilde p|X^{n}}\left( \mathcal{W}_{1}\left( P,H_{n}\right) \right) \lesssim t+%
\frac{1}{\sqrt{\theta +n}}t^{1-d/2} 
\end{equation*}

and the optimization yields $t^{\frac{d}{2}}=C^{\prime }\frac{1}{\sqrt{%
\theta +n}}$, i.e. $t=C^{\prime }\left( \theta +n\right) ^{-\frac{1}{d}}$.
This entails that $\mathbb{E}_{\tilde p|X^{n}}\left( \mathcal{W}_{1}\left( P,H_{n}\right)
\right) =O\left( n^{-\frac{1}{d}}\right) $.

If $d=2$,%
\begin{equation*}
\mathbb{E}_{\tilde p|X^{n}}\left( \mathcal{W}_{1}\left( P,H_{n}\right) \right) \lesssim
\delta ^{k^{\ast }}+\frac{k^{\ast }}{\sqrt{\theta +n+1}}. 
\end{equation*}

Optimization in $k^{\ast }$ gives $\delta ^{k^{\ast }}\log \delta +\frac{1}{%
\sqrt{\theta +n+1}}=0$, i.e. $\delta ^{k^{\ast }}=-\frac{1}{\log \delta }%
\frac{1}{\sqrt{\theta +n+1}}$, so that $k^{\ast }=O\left( \log n\right) $
and $\mathbb{E}_{\tilde p|X^{n}}\left( \mathcal{W}_{1}\left( P,H_{n}\right) \right) =O\left( 
\frac{\log n}{\sqrt{n}}\right) $.

If $d<2$, 
\begin{equation*}
\mathbb{E}_{\tilde p|X^{n}}\left( \mathcal{W}_{1}\left( P,H_{n}\right) \right) \lesssim
\delta ^{k^{\ast }}+\frac{1}{\sqrt{\theta +n+1}}\frac{\left( \delta
^{1-d/2}\right) ^{k^{\ast }}-1}{\delta ^{1-d/2}-1}\leq \delta ^{k^{\ast }}+c\frac{1}{\sqrt{\theta +n+1}},
\end{equation*}

so the optimal order is obtained for $\delta ^{k^{\ast }}=O\left( \frac{1}{%
\sqrt{n}}\right) $.

In all three optimization procedures, ensuring that $k^{\ast }$ is integer
by taking the ceiling or floor function does not affect the order of
convergence.

\subsection{Proof of Theorem \ref{consistencydir}}
If $f:\mathbb{X}^{m}\rightarrow \lbrack 0,+\infty )$ is measurable, then, by definition of $\nu _{n,m}$,
\begin{equation}\label{firsteq}
\begin{aligned}
\mathbb{E}_{\nu _{n,m}}\left( f\left( \mathbf{Z}\right) \right) &=\int_{%
\mathbb{X}^{m}\times \mathbb{X}^{n}}f\left( z_{1},...,z_{m}\right)
Q_{n}\left( dz_{1},...,dz_{m}|\mathbf{X=x}\right) \mathfrak{p}_{0}^{n}\left(
d\mathbf{x}\right) \\
&=\int_{\mathbb{X}^{m}\times \mathbb{X}^{n}}f\left( z_{1},...,z_{m}\right)
\left( \int_{\mathcal{P}\left( \mathbb{X}\right) }\prod_{j=1}^{m}\tilde{p}%
\left( dz_{j}\right) \pi _{n}\left( d\tilde{p}|\mathbf{x}\right) \right) 
\mathfrak{p}_{0}^{n}\left( d\mathbf{x}\right) \\
&=\int_{\mathcal{P}\left( \mathbb{X}\right) \times \mathbb{X}^{n}}\left( \int_{%
\mathbb{X}^{m}}f\left( z_{1},...,z_{m}\right) \prod_{j=1}^{m}\tilde{p}\left(
dz_{j}\right) \right) \pi _{n}\left( d\tilde{p}|\mathbf{x}\right) \mathfrak{p%
}_{0}^{n}\left( d\mathbf{x}\right) \\
&=\int_{\mathcal{P}\left( \mathbb{X}\right) \times \mathbb{X}^{n}}\mathbb{E}%
\left( f\left( \mathbf{Z}\right) |\tilde{p}\right) \pi _{n}\left( d\tilde{p}|%
\mathbf{x}\right) \mathfrak{p}_{0}^{n}\left( d\mathbf{x}\right),
\end{aligned}
\end{equation}
where the third equality follows from Fubini-Tonelli's theorem. Now, applying the equality (\ref{firsteq}) to  $%
f\left( \mathbf{Z}\right) =\mathcal{W}_{1}\left( e_{m}^{\left( \mathbf{Z}\right) },\mathfrak{p}_0\right)$ and using the
triangle inequality on $\tilde{p}$, we get
\begin{equation}\label{firstrieq}
\begin{aligned}
\mathbb{E}_{\nu _{n,m}}\left( \mathcal{W}_{1}
\left( e_{m}^{\left( \mathbf{Z}\right) },\mathfrak{p}_{0}\right)\right) 
\leq\;&\underbrace{\int_{\mathcal{P}\left( \mathbb{X}\right) \times \mathbb{X}^{n}}
\mathbb{E}\left( \mathcal{W}_{1}
\left( e_{m}^{\left( \mathbf{Z}\right) },\tilde{p}\right) \mid \tilde{p}\right)
\pi _{n}\left( d\tilde{p}\mid\mathbf{x}\right)\mathfrak{p}_{0}^{n}\left( d\mathbf{x}\right)}_{A} \\
&\quad+\underbrace{\int_{\mathcal{P}\left( \mathbb{X}\right) \times \mathbb{X}^{n}}
\mathbb{E}\left( \mathcal{W}_{1}
\left( \tilde{p},\mathfrak{p}_{0}\right) \mid \tilde{p}\right)
\pi _{n}\left( d\tilde{p}\mid\mathbf{x}\right) \mathfrak{p}_{0}^{n}\left( d\mathbf{x}\right)}_{B}.
\end{aligned}
\end{equation}

We bound separately $A$ and $B$.
\par\bigskip
$A$, which we define as $\varepsilon _{m,1}\left( \mathbb{X}%
\right) $, is the rate of convergence for the mean of the 1-Wasserstein
distance between $e_{m}^{\left( \mathbf{Z}\right) }$ and a random
probability measure with law $\pi _{n}$, posterior of a sample from the
Dirichlet prior (up to the integral with respect to $\mathfrak{p}%
_{0}^{n}\left( d\mathbf{x}\right) $, which does not change the rate). If we show that a random probability measure distributed as the posterior of a Bayesian model with Dirichlet process prior has $q$-moments finite for some integer $q > 1$, at least $\pi_n$-almost surely, applying Theorem \ref{fournier} under the assumption $\mathbb{X}\subseteq \mathbb{R}^d$ we get the second part of the thesis. 
By Proposition \ref{pitman posterior}, $\tilde{p} \sim \pi_n( \ \cdot \mid X_1,\dots,X_n)$ implies
\begin{equation*}
    \tilde{p} \overset{d}{=} \sum_{i=1}^j W_i \delta_{X_i^*} + R_j F_j,
\end{equation*}
where $(W_1,...,W_j,R_j) \sim \text{Dirichlet}(n_1,...,n_j, \theta)$, independently of the random distribution $F_j$, which has $\mathscr{D}(\theta,H)$ distribution. Moreover, by stick-breaking construction of the DP (definition of Pitman--Yor process with $\sigma = 0$), we have
\begin{equation*}
    F_j = \sum_{i=1}^\infty Y_i \delta_{\Phi_i},
\end{equation*}
where $\Phi_i \iid H$, and 
\begin{equation*}
    Y_1 = V_1, \quad Y_l = V_l \prod_{j = 1}^{l-1} (1 - V_j), \quad l \geq 2,
\end{equation*}
with $\{V_i\}_{i \geq 1}$ sequence of independent random variables taking values in $[0,1]$ and distributed as a $\text{Beta}(1,\theta)$. The following holds:
\begin{equation*}
    \int_{\mathbb{X}} |x|^q \ \tilde{p}(d x) =  \sum_{i=1}^j W_i |X_i^*|^q + R_j \sum_{i = 1}^\infty Y_i |\Phi_i|^q \leq 2 R^q, \qquad \pi_n\text{-a.s.}
\end{equation*}
where $R > 0 $ is the radius of the ball that contains the space $\mathbb{X}$, which exists because $\mathbb{X}$ is totally bounded. Since we have finite moments for any $q > 1$, we can neglect the terms involving $q$ in the bounds of $\varepsilon_{m,1}(\mathbb{X})$. \par\bigskip
Let us turn to $B$. Since $\mathbb{E}\left( \mathcal{W}_{1}\left( \tilde{p},\mathfrak{p}_{0}\right) |\tilde{p}%
\right) =\mathcal{W}_{1}\left( \tilde{%
p},\mathfrak{p}_{0}\right) $ by conditional determinism, we have 
\begin{align*}
\int_{\mathcal{P}\left( \mathbb{X}\right) \times \mathbb{X}^{n}}\mathbb{E}%
\left( \mathcal{W}_{1}\left( \tilde{p},\mathfrak{p}_{0}\right) |\tilde{p}\right) \pi _{n}\left( d\tilde{p}|\mathbf{%
x}\right) \mathfrak{p}_{0}^{n}\left( d\mathbf{x}\right) &=\int_{\mathcal{P}%
\left( \mathbb{X}\right) \times \mathbb{X}^{n}}\mathcal{W}_{1}\left( \tilde{p},\mathfrak{p}_{0}\right) \pi
_{n}\left( d\tilde{p}|\mathbf{x}\right) \mathfrak{p}_{0}^{n}\left( d\mathbf{x}\right)\\
&=\mathbb{E}_{\mathfrak{p}_{0}^{n}}\left( \mathbb{E}_{\tilde p|X^{\left( n\right) }}\left( \mathcal{W}_{1}\left(P,\mathfrak p_0 \right) \right) \right),
\end{align*}

where $P$ distributed as $\tilde p |X^{(n)} \sim \pi _{n}\left( \cdot |X^{\left( n\right) }\right) =\mathscr{D}%
\left( \theta +n,H_n\right) $. We have
\begin{equation}\label{secondtrieq}
\begin{aligned}
\mathbb{E}_{\mathfrak{p}%
_{0}^{n}}\left( \mathbb{E}_{\tilde p|X^{\left( n\right) }}\left( \mathcal{W}_{1}\left(
P,\mathfrak{p}_{0}\right) \right) \right) \leq \mathbb{E}_{\mathfrak{p}_{0}^{n}}\left( 
\mathcal{W}_{1}\left( \mathfrak{p}_{0},H_{n}\right) \right) +\mathbb{E}_{\mathfrak{p}_{0}^{n}}\left( \mathbb{E}_{\tilde p|X^{\left( n\right) }}\left( \mathcal{W}_{1}\left(P,H_{n}\right) \right) \right).
\end{aligned}
\end{equation}

Concerning the first term on the right hand side of (\ref{secondtrieq}), convexity of $\mathcal{W}%
_{1}$ yields 
\begin{equation*}
\mathcal{W}_{1}^{\ }\left( H_{n},\mathfrak{p}_{0}\right) =\mathcal{W}_{1}^{\
}\left( \frac{\theta }{\theta +n}H+\frac{n}{\theta +n}e_{n}^{\left( \mathbf{X%
}\right) },\mathfrak{p}_{0}\right) \leq \frac{\theta }{\theta +n}\mathcal{W}%
_{1}^{\ }\left( H,\mathfrak{p}_{0}\right) +\frac{n}{\theta +n}\mathcal{W}
_{1}^{\ }\left( e_{n}^{\left( \mathbf{X}\right) },\mathfrak{p}_{0}\right). 
\end{equation*}

$\mathcal{W}_{1}^{\ }\left( H,\mathfrak{p}_{0}\right) $ is a constant
independent on $n$, and $\frac{\theta }{\theta +n}=O\left( \frac{1}{n}%
\right) $. Moreover, Theorem \ref{fournier} states that
\begin{equation*}
\mathbb{E}_{\mathfrak{p}_0^n} \Bigl( \mathcal{W}_1 \bigl(
e_n^{(\mathbf{X})}, \mathfrak{p}_0 \bigr) \Bigr)
=
\begin{cases}
O(n^{-1/2}) & \text{if } d<2,\\
O(n^{-1/2} \log(1+n)) & \text{if } d=2,\\
O(n^{-1/d}) & \text{if } d>2.
\end{cases}
\end{equation*}

It follows that
\begin{equation*}
\mathbb{E}_{\mathfrak{p}_0^n} \Bigl( \mathcal{W}_1 \bigl(
\mathfrak{p}_0, H_n \bigr) \Bigr)
=
\begin{cases}
O(n^{-1/2}) & \text{if } d<2,\\
O(n^{-1/2} \log(1+n)) & \text{if } d=2,\\
O(n^{-1/d}) & \text{if } d>2
\end{cases}
\end{equation*}

A rate for the second term on the right hand side of (\ref{secondtrieq}) is given by Lemma \ref{concDP}. Summing the two rates we obtain 
\begin{align*}
\mathbb{E}_{\nu_{n,m}}\left[ \mathcal{W}_1
\left( e_m^{(\mathbf{Z})}, \mathfrak{p}_0 \right) \right]
&= \varepsilon_{m,1}(\mathbb{X})
+ 
\begin{cases}
O(n^{-1/2}) & \text{if } d<2,\\
O(n^{-1/2} \log(1+n)) & \text{if } d=2,\\
O(n^{-1/d}) & \text{if } d>2,
\end{cases}
+ 
\begin{cases}
O(n^{-1/2}) & \text{if } d<2,\\
O\left(\frac{\log n}{\sqrt{n}}\right) & \text{if } d=2,\\
O(n^{-1/d}) & \text{if } d>2.
\end{cases} \\[1mm]
&=
\begin{cases}
O(n^{-1/2}) & \text{if } d<2,\\
O\left(\frac{\log n}{\sqrt{n}}\right) & \text{if } d=2,\\
O(n^{-1/d}) & \text{if } d>2.
\end{cases}
\end{align*}
This finishes the proof.

Notice that we have been computing a rate for a $1$-Wasserstein posterior contraction rate for the Dirichlet prior. Indeed, it holds 
\[
\mathbb{E}_{\mathfrak{p}_{0}^{n}}\left( \mathbb{E}_{\tilde p|X^{\left( n\right) }}\left( \mathcal{W}_{1}\left(P,\mathfrak p_0 \right) \right) \right) = \mathbb{E}_{\mathfrak{p}_{0}^{n}}\left( \mathcal{W}_{1}^{\mathcal{P}(\mathcal P \left( 
\mathbb{X})\right) }\left( \pi_{n}\left( \cdot |X^{\left( n\right) }\right)
,\delta _{\mathfrak{p}_{0}}\right) \right)
\]
because
\begin{equation*}
\begin{aligned}
\mathcal{W}_{1}^{\mathcal{P}(\mathcal P\left( \mathbb{X})\right) }\left( \pi _{n}\left(
\cdot |X^{\left( n\right) }\right) ,\delta _{\mathfrak{p}_{0}}\right)
&=\inf_{\eta \in \Pi\left( \pi _{n}\left( \cdot |X^{\left( n\right)
}\right) ,\delta _{\mathfrak{p}_{0}}\right) }\int_{\mathcal{P}\left( \mathbb{%
X}\right) \times \mathcal{P}\left( \mathbb{X}\right) }\mathcal{W}_{1}\left( p_{1},p_{2}\right) \eta \left(
dp_{1}dp_{2}\right) \\
&=\int_{\mathcal{P}\left( \mathbb{X}\right) }\mathcal{W}%
_{1}\left( p_{1},\mathfrak{p}%
_{0}\right) \pi _{n}\left( dp_{1}|X^{\left( n\right) }\right) ,
\end{aligned}
\end{equation*}
since the only coupling with a delta measure is the independent coupling.
Therefore 
\begin{equation*}
\mathbb{E}_{\mathfrak{p}_{0}^{n}}\left( \mathcal{W}_{1}^{\mathcal{P}\left( 
\mathcal P(\mathbb{X})\right) }\left( \pi _{n}\left( \cdot |X^{\left( n\right) }\right)
,\delta _{\mathfrak{p}_{0}}\right) \right) =\int_{\mathbb{X}^{n}}\left(
\int_{\mathcal{P}\left( \ \mathbb{X}\right) \ }\mathcal{W}_{1}\left( p_{1},\mathfrak{p}_{0}\right) \pi
_{n}\left( dp_{1}|\mathbf{x}\right) \right) \mathfrak{p}_{0}^{n}\left( d%
\mathbf{x}\right). 
\end{equation*}

\section{Details on numerical illustrations}\label{details_num}

We provide additional details on the experiments in Section \ref{downstream}, Section \ref{conv}, and Section \ref{cal_data}.

\subsection{Details on downstream Bayesian inference}

The downstream-inference experiment has two components. The first is a timing sanity check for the exact mechanism-aware posterior associated with the perturbed-histogram synthetic-sample release. In this exact formulation, the MCMC state contains the full latent confidential sample \(X_{1:n}\), together with the noisy histogram variables used internally by the perturbed histogram mechanism. At each saved iteration, posterior inference for \(\tilde p\) is performed from the closed-form conditional posterior
\[
\tilde p \mid X_1,\ldots,X_n \sim \mathscr D\left(\theta+n,\frac{\theta H+\sum_{i=1}^{n}\delta_{X_i}}{\theta+n}\right).
\]
For \(n=10^4\), \(m=10^2\), and \(25\) histogram bins, the reference implementation takes about six minutes to obtain \(1000\) posterior samples. This timing is reported only to document the computational scale of the exact posterior. We do not use this sampler in repeated Monte Carlo experiments or in the real-data analysis.

The second component is the repeated fallback experiment reported in Section \ref{downstream}. The confidential data are generated from a distribution supported on a regular grid in \([0,1]\). The grid probabilities are obtained by evaluating a two-component beta mixture on the grid and normalizing the resulting weights. This produces a non-uniform discrete distribution with asymmetric shape and non-negligible tail mass. We generate \(n=2000\) confidential observations and compare three downstream workflows over \(20\) independent Monte Carlo replicates.

For the Dirichlet-process posterior-predictive mechanism, we release \(m\) synthetic observations and compute posterior summaries from the ordinary Dirichlet-process posterior given the released sample. For the direct perturbed histogram, the observed object is the noisy histogram output, and the mechanism-aware posterior is explored by MCMC over the latent confidential bin counts. For the direct smoothed histogram, the output is treated as a computational surrogate. As discussed in Section \ref{downstream}, this direct smoothed output is not differentially private, except in the degenerate case of complete smoothing, and is therefore included only as a non-private diagnostic.

Table \ref{tab:downstream_summary} summarizes the downstream-inference results. We report bias, root mean square error, average credible-interval length, and empirical coverage for the posterior mean, quartiles, and right-tail probability \(\tilde p((0.75,1])\). We also report wall-clock time and effective sample size per second when MCMC is used. The main qualitative conclusion is that the Dirichlet-process posterior-predictive mechanism has statistical accuracy comparable to the histogram-based workflows in this simplified setting, while avoiding mechanism-aware latent-data or latent-count MCMC.

\begin{table}[htbp]
\centering
\tiny

\begin{tabular}{llrrrrrr}
\toprule
\textbf{Method} & \textbf{Functional} &
\textbf{Bias} & \textbf{RMSE} &
\textbf{Coverage} & \textbf{CI length} &
\textbf{\makecell[b]{Average elapsed \\ seconds}} &
\textbf{ESS/s} \\
\midrule

\multirow{5}{*}{\makecell[l]{Dirichlet-process \\ posterior-predictive \\ mechanism}} & mean      &  0.0018 & 0.0125 & 1.00 & 0.0525 & 0.021 & 38238.1 \\
 & median    &  0.0017 & 0.0156 & 1.00 & 0.0665 & 0.021 & 37809.5 \\
 & Q1 (25\%)       &  0.0008 & 0.0110 & 0.85 & 0.0389 & 0.021 & 37877.7 \\
 & Q3 (75\%)       & -0.0058 & 0.0285 & 1.00 & 0.1382 & 0.021 & 37685.2 \\
 & $\tilde p((0.75,1])$ &  0.0004 & 0.0162 & 1.00 & 0.0709 & 0.021 & 38237.2 \\
\midrule

\multirow{5}{*}{\makecell[l]{Direct perturbed \\ histogram \\ mechanism}} & mean
& -0.0010 & 0.0055 & 1.00 & 0.0264 & 1.271 & 607.4 \\
 & median
&  0.0031 & 0.0105 & 1.00 & 0.0683 & 1.271 & 602.7 \\
 & Q1 (25\%)
& -0.0097 & 0.0109 & 1.00 & 0.0613 & 1.271 & 591.0 \\
 & Q3 (75\%)
& -0.0042 & 0.0135 & 1.00 & 0.0832 & 1.271 & 598.8 \\
 & $\tilde p((0.75,1])$
& -0.0024 & 0.0069 & 1.00 & 0.0660 & 1.271 & 571.7 \\
\midrule

\multirow{5}{*}{\makecell[l]{Direct smoothed \\ histogram \\ mechanism}} & mean
& -0.0009 & 0.0054 & 1.00 & 0.0254 & 0.059 & 13167.7 \\
 & median
&  0.0035 & 0.0104 & 1.00 & 0.0678 & 0.059 & 13102.8 \\
 & Q1 (25\%)
& -0.0099 & 0.0110 & 1.00 & 0.0602 & 0.059 & 13163.9 \\
 & Q3 (75\%)
& -0.0040 & 0.0136 & 1.00 & 0.0810 & 0.059 & 13131.0 \\
 & $\tilde p((0.75,1])$
& -0.0021 & 0.0071 & 1.00 & 0.0663 & 0.059 & 12914.6 \\

\bottomrule
\end{tabular}

\caption{Summary of downstream inference performance}\label{tab:downstream_summary}
\end{table}

\subsection{Details on the convergence simulation}

In Section \ref{conv}, the simulated confidential data are generated as follows. Let \(G\sim\mathrm{Geometric}(0.05)\), with support \(\{1,2,\ldots\}\), and define \(X=T(G)\), where
\[
T(k)=\frac{2(k-2^{r(k)-1})+1}{2^{r(k)}},\qquad r(k)=\lfloor \log_2 k \rfloor+1.
\]
This map sends the positive integers to dyadic midpoints in \([0,1]\) by increasing dyadic resolution. In particular, \(1\mapsto 1/2\), \(2\mapsto 1/4\), \(3\mapsto 3/4\), \(4\mapsto 1/8\), and so on. The construction yields a discrete distribution on \([0,1]\) with repeated values and a long tail of increasingly fine dyadic categories.

Figure \ref{w1_fig} is obtained from a simulated population of size \(N=1000000\). For each \(n\in\{10000,50000,75000,100000,200000,250000,500000,1000000\}\), we draw confidential subsamples of size \(n\), generate synthetic data using the Dirichlet-process posterior-predictive mechanism, and compute \(\mathcal W_1\left(e_m^{(\mathbf Z)},e_n^{(\mathbf X)}\right)\). The reported values are averages over \(100\) independent runs, with \(99\%\) confidence bands.

\subsection{Details on the California census dataset}

In Section \ref{cal_data}, we use the PINCP variable from the \(2024\) \(5\)-Year Public Use Microdata Sample from the American Community Survey. Observations with missing or non-positive income are removed. The remaining values are transformed by taking logarithms and then applying min-max scaling, so that the final data lie in \([0,1]\). The cleaned and transformed dataset contains \(11918162\) observations, \(32393\) distinct recorded values, and \(7689\) singletons.

The Dirichlet-process posterior-predictive mechanism uses \(H=\operatorname{Unif}[0,1]\), \(\theta=1\), \(\varepsilon=2\), and \(\delta=10^{-5}\). The released sample size is chosen as the largest integer satisfying the privacy bound in Theorem \ref{cor_dp}, which gives \(m=119\). The histogram mechanisms use \(300\) bins and the same released sample size. The perturbed histogram uses Gaussian noise calibrated to the \((\varepsilon,\delta)\)-differential privacy target. The smoothed histogram is composed with an \((\varepsilon,\delta)\)-differentially private Gaussian perturbation of the histogram counts. The Wasserstein boxplots in Figure \ref{fig:wass_boxplots} are based on \(100\) independent runs of each synthetic-data mechanism.

\end{document}